\DeclareMathOperator{\divg}{div}
\DeclareMathOperator{\im}{Im}
\DeclareMathOperator{\tr}{tr}
\DeclareMathOperator{\Aut}{Aut}
\DeclareMathOperator{\End}{End}
\DeclareMathOperator{\Diff}{Diff}
\DeclareMathOperator{\GDiff}{GDiff}
\DeclareMathOperator{\Isom}{Isom}
\DeclareMathOperator{\Rc}{Rc}
\begin{document}

\theoremstyle{definition}
\newtheorem{claim}{Claim}
\theoremstyle{plain}
\newtheorem{proposition}{Proposition}[section]
\newtheorem{theorem}[proposition]{Theorem}
\newtheorem{lemma}[proposition]{Lemma}
\newtheorem{corollary}[proposition]{Corollary}
\theoremstyle{definition}
\newtheorem{defn}[proposition]{Definition}
\theoremstyle{remark}
\newtheorem{remark}[proposition]{Remark}
\theoremstyle{definition}
\newtheorem{example}[proposition]{Example}
\theoremstyle{definition}
\newtheorem*{Motivation}{Motivation}

\title{\textbf{The Stability of Generalized Ricci Solitons}}
\author{KUAN-HUI LEE}
\date{}
\maketitle

\begin{abstract}
    In \cite{GRF, P}, it was shown that the generalized Ricci flow is the gradient flow of a functional $\lambda$ generalizing Perelman's $\lambda$ functional for Ricci flow. In this work, we further computed the second variation formula and proved that a Bismut-flat, Einstein manifold is linearly stable under some curvature assumptions. In the last part of this paper, I proved that dynamical stability and linear stability are equivalent on a steady gradient generalized Ricci soliton $(g,H,f)$. This generalizes the results in  \cite{H2,Kroencke2014,K4,DG,MR2219268}. 
\end{abstract}

\setlength\arraycolsep{2pt}

\section{Introduction}

Suppose that $M$ is a smooth manifold and $H_0$ is a closed 3-form. Let $\mathcal{M}$ denote the space of Riemannian metrics and $\Omega^2$ is the space of two forms. We say that $(g_t,b_t)\in \mathcal{M}\times \Omega^2$ is a generalized Ricci flow (we will abbreviate it as GRF later on) if 
\begin{align}
    \frac{\partial}{\partial t}g=-2\Rc+\frac{1}{2}H^2, \quad
   \frac{\partial}{\partial t}b=-d^*H \quad \text{ where $H=H_0+db$}. \label{1}
\end{align}

This parabolic flow was written in \cite{P}, \cite{J} and it is a generalization of the Ricci flow. A few years ago, Perelman \cite{Pe} discovered that the Ricci flow can be viewed as a gradient flow of the Perelman $\mathcal{F}$-functional. His ideas can also be applied to the generalized Ricci flow. We define the generalized Einstein--Hilbert functional 
\begin{align}
    \mathcal{F}\colon\nonumber&\quad \Gamma(S^2M)\times \Omega^3\times C^\infty (M)\to \mathbb{R}
    \\& \quad (g,H,f)\longmapsto \int_M (R-\frac{1}{12}|H|^2+|\nabla f|^2)e^{-f}dV_g \label{2}
\end{align}
and  
\begin{align}
    \lambda(g,H)\coloneqq\inf\Big\{\mathcal{F}(g,H,f)\big| f\in C^\infty(M),\,\int_M e^{-f}dV_g=1\Big\}. \label{3}
\end{align}

One can see that $\lambda(g,H)$ can be achieved by some $f$ uniquely, i.e., $\lambda(g,H)=\mathcal{F}(g,H,f)$ and $\lambda$ is the first eigenvalue of the Schrödinger operator $-4\triangle+R-\frac{1}{12}|H|^2$. In \cite{P}, it was shown that $\lambda$ is monotone increasing under the generalized Ricci flow and the critical points of $\lambda$ are the steady gradient generalized Ricci solitons. (See Section 3.1 for more details.) 

\begin{defn}\label{D1}
Given $(g,H)$ a pair of smooth metric and closed 3-form, we say that $(g,H)$ is a \emph{steady generalized Ricci soliton} if there exists a smooth vector field $X$ such that
\begin{align}
    0=\Rc-\frac{1}{4}H^2+\frac{1}{2}L_Xg, \quad 0=d^*_gH+i_XH. \label{4}
\end{align}
Furthermore, we say that the soliton is \emph{gradient} if there exists a smooth function $f$ such that $X=\nabla f$. $(g,H)$ is called a \emph{generalized Einstein metric} if $X=0$.

\end{defn}  

The most important example of the steady generalized Ricci soliton is the work on $S^3$. Given a standard unit sphere metric $g_{S^3}$. By taking $H_{S^3}=2dV_{g_{S^3}}$, we get
\begin{align*}
    \Rc_{g_{S^3}}=\frac{1}{4}H^2_{S^3}, \quad d^*H_{S^3} =0
\end{align*}
which implies that $(g_{S^3},H_{S^3} )$ is a generalized Einstein metric. In \Cref{C3}, we see that any three-dimensional generalized Einstein manifold is a quotient of $S^3$. Moreover, in \cite{classification}, Streets showed that
there exists a family of generalized steady Ricci solitons on $S^3$ which forms a smooth family with the round $S^3$. Thus, $S^3$ is not rigid.

Next, let us compute the second variation formula. 
\begin{theorem}\label{T1}
Suppose $(g_t,H_t)$ is a one parameter family such that 
\begin{align*}
    &\frac{\partial}{\partial t}\Big|_{t=0}g_t =h, \quad  g_0=g,
    \\&  \frac{\partial}{\partial t}\Big|_{t=0}H_t =dK, \quad  H_0=H
\end{align*}
and $f_t$ satisfies $\lambda(g_t,H_t)=\mathcal{F}(g_t,H_t,f_t)$.
The second variation of $\lambda$ on a compact steady gradient generalized Ricci soliton $(M,g,H,f)$ is given by
\begin{align}
     \nonumber\frac{d^2}{dt^2}\Big|_{t=0}\lambda&=\int_M \Big\langle \frac{1}{2}\triangle_f h+(\mathring{R} h)+\divg^*_f \divg_fh,h \Big\rangle e^{-f}dV_g
   \nonumber \\&\kern2em + \int_M \Big[-\frac{1}{4}h_{ij}h_{ik}H^2_{jk}-\frac{1}{2}h_{ij}h_{ac}H_{iab}H_{jcb}+h_{ij}(dK)_{iab}H_{jab}\Big]e^{-f}dV_g
      \nonumber\\&\kern2em -\frac{1}{6}\int_M |dK|^2 e^{-f}dV_g-\int_M \frac{1}{2}|\nabla v_{h,K}|^2e^{-f}dV_g. \label{5}
\end{align} 
Here $v_{h, K}$ is the unique (up to a constant) solution of
\begin{align*}
    \triangle_f(v_{h,K})=\divg_f \divg_fh-\frac{1}{6}\langle dK,H \rangle. \quad \text{(Check the notation of $\divg_f$ and $\triangle_f$ at (\ref{18})).}
\end{align*}

\end{theorem}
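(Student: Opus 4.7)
The plan is to use the envelope identity $\lambda(g_t,H_t)=\mathcal{F}(g_t,H_t,f_t)$, where $f_t$ is the unique minimizer, and differentiate twice. Because $f_t$ satisfies the constrained Euler--Lagrange equation, the first variation of $\mathcal{F}$ in the $f$-slot vanishes along the constraint $\int e^{-f_t}\,dV_{g_t}=1$, so $\frac{d}{dt}\lambda$ sees only $\dot g=h$ and $\dot H=dK$. The second derivative, however, does bring $\dot f=\partial_t f_t|_{t=0}$ back in, so the main preparatory step is to pin down the PDE that $\dot f$ satisfies. Linearizing the Euler--Lagrange equation $2\triangle f-|\nabla f|^2+R-\frac{1}{12}|H|^2=\lambda$ together with the volume constraint, using the standard identity $\dot R=-\triangle(\tr_g h)+\divg\divg h-\langle h,\Rc\rangle$ and the variation of $|H|^2$ (which produces $2\langle H,dK\rangle$ and a $-2h_{ij}H_{iab}H_{jab}$ correction from the metric), and then invoking the soliton identities \eqref{4} to eliminate $\Rc$ and $d^*H$, the linearization reduces to a Poisson equation whose source is $\divg_f\divg_fh-\frac{1}{6}\langle dK,H\rangle$. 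This is exactly the equation defining $v_{h,K}$, so $\dot f$ coincides with $v_{h,K}$ up to an additive constant that is fixed by the constraint.

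With $\dot f$ in hand, I would differentiate the first-variation formula one more time and organize the resulting terms. The pure $(h,h)$-block reproduces, by essentially the same weighted computation used for Perelman's $\lambda$-functional, the Lichnerowicz-type quadratic form $\frac{1}{2}\triangle_f h+\mathring R h+\divg_f^*\divg_f h$, once the soliton identity $\Rc-\frac{1}{4}H^2+\nabla^2 f=0$ is used to convert leftover $\nabla^2 f$-terms into $H^2$-terms. The variation of $-\frac{1}{12}|H|^2$ together with that of the weighted volume yields the two quadratic $H$-corrections $-\frac{1}{4}h_{ij}h_{ik}H^2_{jk}-\frac{1}{2}h_{ij}h_{ac}H_{iab}H_{jcb}$; the mixed $(\dot g,\dot H)$-variation, combined with the soliton identity $d^*H+i_{\nabla f}H=0$, produces the cross term $h_{ij}(dK)_{iab}H_{jab}$; and the pure $\dot H$-variation produces the kinetic $-\frac{1}{6}|dK|^2$ term. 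All remaining $\dot f$-contributions, after weighted integration by parts and substitution of the Poisson equation for $v_{h,K}$, collapse into the single term $-\frac{1}{2}|\nabla v_{h,K}|^2$.

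The main obstacle I expect is the bookkeeping of mixed $h$-$dK$-$H$ terms through several weighted integrations by parts, and in particular verifying with the correct factors that the source of the $v_{h,K}$-equation is precisely $-\frac{1}{6}\langle dK,H\rangle$; the $1/6$ must arise consistently from the $1/12$ in the functional and the two copies of $H$ produced by differentiating $|H|^2$. A secondary care point is the additive constant in $\dot f$ forced by the volume constraint: it shifts certain intermediate expressions but drops out of the final formula, since every surviving $\dot f$-contribution is either of gradient type or pairs against a quantity with vanishing weighted mean.
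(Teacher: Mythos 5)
Your overall strategy is the same as the paper's: differentiate the first variation formula at the soliton, linearize the scalar identity $\lambda=R-\frac{1}{12}|H|^2+2\triangle f-|\nabla f|^2$ to extract an auxiliary Poisson equation, and then collapse the $\dot f$-dependent terms by integration by parts. However, there is a genuine error at the central identification. The linearization of the Euler--Lagrange equation does produce a Poisson equation with source $\divg_f\divg_f h-\tfrac{1}{6}\langle dK,H\rangle$, but the unknown in that equation is \emph{not} $\dot f$; it is the combination $\tr_g h-2\dot f$. Concretely, one finds
\begin{align*}
\triangle_f\bigl(\tr_g h-2\dot f\bigr)=\divg_f\divg_f h-\tfrac{1}{6}\langle dK,H\rangle,
\end{align*}
so the correct identification is $v_{h,K}=\tr_g h-2\dot f$ (up to a constant), not $v_{h,K}=\dot f$. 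This is not cosmetic: the second variation produces intermediate terms proportional to $\tfrac{\tr_g h}{2}-\dot f=\tfrac{1}{2}v_{h,K}$ (one paired against $\langle dK,H\rangle$, one appearing as a Hessian $\nabla^2(\tfrac{\tr_g h}{2}-\dot f)$ paired against $h$), and it is precisely the substitution $v_{h,K}=\tr_g h-2\dot f$ together with the Poisson equation that makes these terms cancel against each other and leave the single residue $-\tfrac{1}{2}\int|\nabla v_{h,K}|^2e^{-f}dV_g$. With your identification $\dot f=v_{h,K}$ the quantity $\tfrac{\tr_g h}{2}-\dot f$ does not reduce to $\tfrac{1}{2}v_{h,K}$, the cancellation fails, and the claimed collapse of ``all remaining $\dot f$-contributions'' does not go through.

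A secondary, smaller slip: the metric part of $\delta|H|^2$ for a $3$-form carries three copies of the inverse metric, so it is $-3h_{ij}H^2_{ij}$, not $-2h_{ij}H_{iab}H_{jab}$. The coefficient $-3$ is what allows the $H^2$-terms from $\delta R$ (after using the soliton equation) and from $-\tfrac{1}{12}\delta|H|^2$ to cancel in the linearized Euler--Lagrange equation; with $-2$ a spurious $\tfrac{1}{12}h_{ij}H^2_{ij}$ term survives into your Poisson equation and corrupts the source. Both points are repairable without changing your overall plan, but as written the derivation of the $-\tfrac{1}{2}|\nabla v_{h,K}|^2$ term is not correct.
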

Our second variation formula can be given by a self-adjoint operator $\mathcal{N}$ which is defined by
\begin{align*}
    \nonumber\mathcal{N}\colon \quad &\Gamma(S^2M)\times \Omega^2\times C^\infty(M)  \longrightarrow \Gamma(S^2M)\times \Omega^2 \times C^\infty(M)
    \\&  \quad (h,K,v_{h,K})\longmapsto (A(h)+C(K),B(K)+D(h),\frac{\triangle_f v_{h,K}}{2}).
\end{align*}
Here  $A: \Gamma(S^2M)\to \Gamma(S^2M)$, $B: \Omega^2\to \Omega^2$, $C:\Omega^2\to \Gamma(S^2M)$ and $D:\Gamma(S^2M)\to \Omega^2$ are
\begin{align*}
    &A(h)_{ij}\coloneqq \frac{1}{2}\triangle_f h_{ij}+(\mathring{R} h)_{ij}+(\divg^*_f \divg_fh)_{ij}-\frac{1}{8}h_{ik}H^2_{jk}-\frac{1}{8}h_{jk}H^2_{ik}-\frac{1}{2}h_{ac}H_{iab}H_{jcb},
    \\& B(K)_{ij}\coloneqq-\frac{1}{2}(d^*dK)_{ij}-\frac{1}{2}(dK)_{lij}\nabla_l f,
    \\& C(K)_{ij}\coloneqq\frac{1}{4}((dK)_{iab}H_{jab}+H_{iab}(dK)_{jab}),
    \\& D(h)_{ij}\coloneqq\frac{1}{2}(-h_{ab}\nabla_a H_{bij}-(\divg h)_b H_{bij}+\nabla_a h_{ib}H_{baj}+\nabla_a h_{jb}H_{bia}+h_{ab}H_{bij}\nabla_a f).
\end{align*}

\begin{corollary}
Define $\mathcal{N}$ as above, the second variation of $\lambda$ on a compact steady gradient generalized Ricci soliton $(M^n,g,H,f)$ is given by
\begin{align*}
   \frac{d^2}{dt^2}\Big|_{t=0}\lambda=\int_{M} \langle \mathcal{N}(h,K,v_{h,K}),(h,K,v_{h,K}) \rangle e^{-f}dV_g=\Big(\mathcal{N}(h,K,v_{h,K}),(h,K,v_{h,K})\Big)_{f}
\end{align*}
where $\Big(\cdot ,\cdot\Big)_{f}$ is a $L^2$ inner product given by \emph{(\ref{25})}.
\end{corollary}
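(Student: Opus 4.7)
The plan is to expand $(\mathcal{N}(h,K,v_{h,K}),(h,K,v_{h,K}))_f$ into its four diagonal blocks with respect to the weighted $L^2$ inner product on $\Gamma(S^2M)\times\Omega^2\times C^\infty(M)$ and match, term by term, the four groups appearing on the right-hand side of \eqref{5}. Explicitly, I would verify:
\begin{enumerate}[label=(\roman*)]
\item $\displaystyle\int_M\langle A(h),h\rangle\, e^{-f}dV_g$ equals the first line of \eqref{5} plus the two pure $h$ couplings $-\tfrac14 h_{ij}h_{ik}H^2_{jk}-\tfrac12 h_{ij}h_{ac}H_{iab}H_{jcb}$;
\item $\displaystyle\int_M\langle C(K),h\rangle\, e^{-f}dV_g+\int_M\langle D(h),K\rangle\, e^{-f}dV_g=\int_M h_{ij}(dK)_{iab}H_{jab}\, e^{-f}dV_g$;
\item $\displaystyle\int_M\langle B(K),K\rangle\, e^{-f}dV_g=-\tfrac16\int_M|dK|^2 e^{-f}dV_g$;
\item $\displaystyle\int_M\tfrac12(\triangle_f v_{h,K})v_{h,K}\,e^{-f}dV_g=-\tfrac12\int_M|\nabla v_{h,K}|^2 e^{-f}dV_g$.
\end{enumerate}

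Step (i) is immediate from the definition of $A$, after observing that the symmetrization $-\tfrac18 h_{ik}H^2_{jk}-\tfrac18 h_{jk}H^2_{ik}$ collapses to $-\tfrac14 h_{ij}h_{ik}H^2_{jk}$ when contracted against the symmetric $h_{ij}$. Step (iv) is the standard weighted identity $\int_M(\triangle_f u)u\,e^{-f}dV_g=-\int_M|\nabla u|^2 e^{-f}dV_g$. For step (iii) I would write $B(K)=-\tfrac12(d^*dK+i_{\nabla f}dK)$; integration by parts moves the derivative off $K$ and produces a $\nabla f$ correction that cancels the $-\tfrac12(dK)_{lij}\nabla_l f$ piece already built into $B$. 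What remains is $-\tfrac12\int_M(dK)_{kij}\nabla^k K^{ij}\,e^{-f}dV_g=-\tfrac16\int_M(dK)_{kij}(dK)^{kij}\,e^{-f}dV_g$, where the factor $\tfrac13$ arises from the cyclic identity $(dK)_{kij}\nabla^k K^{ij}=\tfrac13(dK)_{kij}(dK)^{kij}$, i.e.\ that the totally antisymmetric part of $\nabla^k K^{ij}$ is $\tfrac13(dK)^{kij}$.

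The main obstacle is step (ii), the weighted adjointness $\int_M\langle C(K),h\rangle e^{-f}dV_g=\int_M\langle D(h),K\rangle e^{-f}dV_g$. By symmetry of $h_{ij}$ and antisymmetry of $H_{jab}$ in $a,b$, the left side equals $\tfrac12\int_M h_{ij}(dK)_{iab}H_{jab}\,e^{-f}dV_g$, so I would expand $(dK)_{iab}=\nabla_iK_{ab}+\nabla_aK_{bi}+\nabla_bK_{ia}$, integrate by parts to transfer all derivatives off $K$, and check that the result reassembles as $\int_M D(h)_{ij}K_{ij}\,e^{-f}dV_g$. Differentiation of the weight produces the $h_{ab}H_{bij}\nabla_a f$ term of $D$; derivatives landing on $H$ give $-h_{ab}\nabla_a H_{bij}$; and derivatives landing on $h$, after relabeling and combining the three cyclic pieces of $dK$, reassemble into the remaining three terms $-(\divg h)_b H_{bij}$, $\nabla_a h_{ib}H_{baj}$, $\nabla_a h_{jb}H_{bia}$ of $D(h)$. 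Closedness $dH=0$ may be invoked to regroup $\nabla H$ indices cyclically. Apart from this calculation, the corollary is pure bookkeeping, and the stated identity then follows by summing (i)--(iv).
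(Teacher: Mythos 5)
Your proposal is correct and takes essentially the same route as the paper: the paper proves this corollary by simply reading the identity off from the derivation of the second variation formula (Theorem 3.9), where the integrations by parts you describe in steps (ii)--(iv) have already been carried out, so your term-by-term matching is exactly that derivation made explicit. One small correction in step (ii): when you move the derivative off $K$ in $\int_M K_{ij}\nabla_a h_{ib}H_{baj}\,e^{-f}dV_g$, the resulting term involving $\nabla_a H_{baj}=(d^*H)_{bj}$ cancels against the term coming from differentiating the weight $e^{-f}$ precisely because of the soliton identity $d^*H+i_{\nabla f}H=0$, not because of $dH=0$; without that identity the operators $C$ and $D$ would not be mutually adjoint in the $f$-twisted inner product.
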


The goal of this work is to discuss the stability behavior of compact steady gradient generalized Ricci solitons by using the second variation formula (\ref{5}). Let $(M^n,g,H,f)$ be a steady generalized gradient Ricci soliton with its corresponding generalized metric $\mathcal{G}=\mathcal{G}(g,0)$ and a background $3$-form $H$. We have two different concepts about stability.
\\
\\\textbf{(1) Dynamical stability:  }We say that a steady gradient generalized Ricci soliton $(g,H,f)$ is \emph{dynamically stable} if for any neighborhood $\mathcal{U}$ of $(g,0)$ in $\mathcal{M}\times \Omega^2$, there exists a smaller neighborhood $\mathcal{V}\subset\mathcal{U}$ such that the generalized Ricci flow $(g_t,b_t)$ starting in $\mathcal{V}$ stays in $\mathcal{U}$ for all $t\geq 0$ and converges to a critical point $(g_\infty,b_\infty)$ of $\lambda$ with $\lambda(g,H)=\lambda(g_\infty,H_\infty)$ where $H_\infty=H+db_\infty$.
\\
\\\textbf{(2) Linear stability:  }We say that a steady gradient generalized Ricci soliton $(g,H,f)$ is \emph{linearly stable} if 
\begin{align*}
    \frac{d^2}{dt^2}\lambda(h,K)\leq 0 \quad \text{ for all variation $(h,K)$ at $(g,H,f)$}.
\end{align*}

To discuss the linear stability, we first observe that when $H=0$ the steady gradient generalized Ricci solitons are Ricci flat metrics. They are stable if and only if $\triangle_L$ is nonpositive (which is the same as the Einstein--Hilbert functional case, see \cite{B} and \cite{P} for detail). Next, we try to derive a similar result for generalized Einstein manifolds. However, it is not easy to find a general result without motivation. In Section 4, we see that any compact semisimple Lie group is Bismut-flat and Einstein, which motivates us to consider a Bismut-flat, Einstein manifold $(M^n,g,H)$. The following is our result.
\begin{theorem}\label{T2}
Suppose $(M^n,g,H)$ is a Bismut-flat, Einstein manifold. $(M^n,g,H)$ is linearly stable if $\triangle_G|_{TT_g}$ is negative semidefinite where
\begin{align*}
     \triangle_G(h)\coloneqq \frac{1}{2}\triangle h+3\mathring{R}(h)+\frac{1}{2}(\Rc\circ h+h\circ \Rc)
\end{align*}
and $TT_g\coloneqq\{h: \tr_gh=0,\divg h=0\}$.
\end{theorem}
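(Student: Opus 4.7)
The plan is to restrict the second variation formula (\ref{5}) to a transverse slice by exploiting the gauge invariance of $\lambda$, then to maximize over the $K$-degrees of freedom so as to obtain an effective quadratic form on $h$ which, by the Bismut--flat Einstein identities, collapses to $\int\langle\triangle_G h,h\rangle dV$; the hypothesis then yields linear stability.

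A generalized Einstein manifold ($X=0$ in Definition~\ref{D1}) has constant minimizing potential $f$, so the weighted operators reduce to their unweighted counterparts: $\triangle_f=\triangle$ and $\divg_f=\divg$. On a Bismut--flat manifold the torsion $H$ is parallel, $\nabla H=0$, and the Riemann tensor obeys an algebraic identity of the form
\[
R_{ijkl}=\tfrac14\bigl(H_{ikm}H_{jlm}-H_{ilm}H_{jkm}\bigr),
\]
which together with the Einstein condition gives $H^2_{ij}=4\lambda\, g_{ij}$ when $\Rc=\lambda g$. Since $\lambda$ is invariant under $\Diff(M)$ and under $b$-gauge shifts $b\mapsto b+d\xi$, the infinitesimal orbit directions lie in the kernel of the Hessian, and by combining the slice theorem with Hodge decomposition I may assume $\divg h=0$ and $d^*K=0$. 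A direct computation using the identities above shows that the conformal direction $h=\phi g$ contributes non-positively (the $-\tfrac12\int|\nabla v_{h,K}|^2 dV$ piece absorbing a portion of $\int\langle\divg_f^*\divg_f h,h\rangle dV$), so it suffices to establish the inequality on $h\in TT_g$.

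On $TT_g$ the term $\divg^*_f\divg_f h$ vanishes and the auxiliary equation becomes $\triangle v_{h,K}=-\tfrac16\langle dK,H\rangle$, making (\ref{5}) a quadratic form concave in $K$ for each fixed $h$. Maximizing over the exact part of $dK$ produces a non-negative correction to the $h$-only part of the shape $c\int|(hH)^{\mathrm{as}}|^2 dV$, while the $v_{h,K}$-contribution further lowers the value. Contracting the Bismut--flat identity with $h\otimes h$ rewrites both $-\tfrac12 h_{ij}h_{ac}H_{iab}H_{jcb}$ and the new correction in terms of $\langle\mathring{R}h,h\rangle$ modulo trace pieces that vanish on $TT_g$, while $H^2=4\lambda g$ converts $-\tfrac14 h_{ij}h_{ik}H^2_{jk}$ into a multiple of $\langle\Rc\circ h+h\circ\Rc,h\rangle$. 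Collecting all pieces, the effective form on $TT_g$ becomes exactly $\int\langle\triangle_G h,h\rangle dV$, which is $\leq 0$ by hypothesis.

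The main obstacle is the precise sign and coefficient bookkeeping in the Bismut--flat algebraic identity: the two contributions to $\langle\mathring{R}h,h\rangle$, one already present in (\ref{5}) and one produced by the $K$-extremization, must combine with the pre-existing $\mathring{R}h$ term to yield exactly the coefficient $3$ characteristic of $\triangle_G$. This cancellation is specific to Bismut--flat Einstein geometry, which is what singles it out as a natural setting for the stability theorem. A subsidiary technical point is to ensure that variations of $K$ lying outside the exact part of $\Omega^3$ (relevant when $H^3(M)\neq 0$) likewise contribute non-positively.
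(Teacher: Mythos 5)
Your overall strategy coincides with the paper's proof of \Cref{P6}: split the variation space into gauge directions, a conformal-plus-$K$ block $\mathcal{V}_1$, and $TT_g$; maximize the concave quadratic in $dK$ pointwise (the paper does this by writing $dK=3\Lambda+N$ with $\Lambda_{aib}=\frac{1}{3}(h_{aj}H_{jib}+h_{bj}H_{jai}+h_{ij}H_{jba})$, whose $\frac{3}{2}\int|\Lambda|^2$ is exactly your antisymmetrized-$hH$ correction); and then use $\nabla H=0$, the Bismut-flat curvature identity and $H^2=4\mu g$ to collapse the quadratic form on $TT_g$ to $\int\langle\triangle_Gh,h\rangle\,dV_g$. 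That part of your plan is sound and reproduces (\ref{37})--(\ref{38}), including the correct accounting of how the pre-existing $\mathring{R}$ term, the $-\frac{1}{2}h_{ij}h_{ac}H_{iab}H_{jcb}$ term, and the $K$-extremization combine to give the coefficient $3$.

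The genuine gap is your treatment of the conformal sector. You assert that ``a direct computation shows that the conformal direction $h=\phi g$ contributes non-positively,'' attributing this to the $-\frac{1}{2}\int|\nabla v_{h,K}|^2$ term absorbing part of $\int\langle\divg^*\divg h,h\rangle$. That is not the mechanism, and the claim is not routine: for $(ug,K)\in\mathcal{V}_1$ the second variation contains the indefinite coupling $\int u\langle dK,H\rangle$, and the paper's \Cref{L11} must complete squares against both $-\frac{1}{6}|dK|^2$ and $-\frac{1}{2}|\nabla v|^2$ with coefficients tuned to $\sqrt{3}$, producing the margin $\frac{n-2}{2}-18+10\sqrt{3}\geq 0$, which holds only for $n\geq 4$. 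For $n=3$ this Cauchy--Schwarz argument fails outright, and the paper instead restricts $K=-d^*(\omega\,dV_g)$, expands $u$ and $\omega$ in eigenfunctions of the Laplacian on $S^3$, and checks that the eigenvalues $k(k+2)$ avoid the unstable window $(3,8)$. Your proposal contains none of this, and it is precisely where the argument could fail. Two smaller omissions: you never verify that the cross term between $ug$ and $h\in TT_g$ vanishes (the paper computes it equals $\int(\triangle u-4\mu u)\tr_gh\,dV_g=0$), and your closing worry about ``variations of $K$ lying outside the exact part of $\Omega^3$'' is vacuous, since $dK$ is exact by construction; the real subtlety, recorded in \Cref{R11}, runs the other way --- maximizing over arbitrary $3$-forms only yields an upper bound, which is harmless for stability but is why the converse implication is not claimed.
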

\begin{remark}
It is not true that linearly stable implies $\triangle_G|_{TT_g}$ is negative semidefinite. We will give a reason in \Cref{R11}. 
\end{remark}

Due to the diffeomorphism invariance of $\lambda$, to test the linear stability we may only focus on the non-trivial variations. Recall that in the Einstein--Hilbert functional case, one can construct an Ebin's slice $\mathcal{S}_g$ for any metric $g$ (see \cite{MR0267604}) and non-trivial variations come from the tangent space of the slice $T_g\mathcal{S}_g$. In \cite{Rubio_2019}, Rubio and Tipler generalized Ebin's work on the generalized manifolds so that we have a generalized slice $\mathcal{S}_{\mathcal{G}}$ for any generalized metric $\mathcal{G}$. However, we can not apply their result directly since we 
use different $L^2$ inner products. To apply their result to our case, we need to modify their proof and derive the following.

\begin{theorem}\label{T}
Let $\mathcal{G}$ be a generalized metric on an exact Courant algebroid $E$ and suppose $f$ is $\Isom(\mathcal{G})$ invariant, then there exists an ILH submanifold $S^f_\mathcal{G}$ in the space of generalized metric $\mathcal{GM}$ such that 
\\(1)  $\forall F\in \Isom_H(\mathcal{G}),\, F\cdot S^f_\mathcal{G}=S^f_\mathcal{G}$.
\\(2) $ \forall F\in \GDiff_H, \text{if $(F\cdot S^f_\mathcal{G})\cap S^f_\mathcal{G}=\emptyset $ then }F\in \Isom_H(\mathcal{G})$.
\\(3) There exists a local cross section $\chi$ of the map $F\longmapsto \rho_{\mathcal{GM}}(F,\mathcal{G})$ on a neighborhood $\mathcal{U}$ of $\mathcal{G}$ in $\GDiff_H\cdot\mathcal{G}$ such that the map from $\mathcal{U}\times S^f_{\mathcal{G}}\longrightarrow \mathcal{GM}$ given by $(V_1,V_2)\longmapsto \rho_{\mathcal{GM}}(\chi(V_1),V_2)$ is a homeomorphism onto its image.   
\end{theorem}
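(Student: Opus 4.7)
The plan is to adapt the slice construction of Rubio and Tipler \cite{Rubio_2019} to the $f$-weighted $L^2$ inner product, keeping the overall architecture of their proof but replacing the standard $L^2$ pairing wherever it enters the definition of the adjoint of the infinitesimal orbit map. First I would pass to Sobolev completions $\mathcal{GM}^s$ and $\GDiff^s_H$ for $s$ large enough that these become smooth Hilbert manifolds, with $\GDiff^s_H$ a topological group acting smoothly on $\mathcal{GM}^s$ via $\rho_{\mathcal{GM}}$. Linearizing this action at $\mathcal{G}$ gives a first-order differential operator $\alpha_\mathcal{G}$ from the Lie algebra of $\GDiff_H$ into $T_\mathcal{G}\mathcal{GM}$, whose image is exactly the tangent space to the orbit through $\mathcal{G}$.

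Next I would compute the formal adjoint $\alpha^*_{\mathcal{G},f}$ of $\alpha_\mathcal{G}$ with respect to the $f$-weighted $L^2$ pairing. Since $e^{-f}$ is smooth and positive, this adjoint differs from its unweighted counterpart only in lower-order terms involving $\nabla f$, so $\alpha_\mathcal{G}\circ\alpha^*_{\mathcal{G},f}$ is an elliptic second-order operator with the same principal symbol as in \cite{Rubio_2019}. Standard elliptic theory then produces an $L^2_f$-orthogonal Hodge-type splitting
\begin{align*}
T_\mathcal{G}\mathcal{GM}^s \;=\; \ker(\alpha^*_{\mathcal{G},f}) \,\oplus\, \im(\alpha_\mathcal{G}).
\end{align*}
I would define $S^{f,s}_\mathcal{G}$ as the image of a small neighborhood of $0 \in \ker(\alpha^*_{\mathcal{G},f})$ under an equivariant exponential-type chart on $\mathcal{GM}$ centered at $\mathcal{G}$, of the sort used in \cite{Rubio_2019}. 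Applying the implicit function theorem to the map $(F,V) \mapsto \rho_{\mathcal{GM}}(F,\exp_\mathcal{G} V)$ at $(\mathrm{id},0)$ then simultaneously yields the local cross section $\chi$ of property~(3) and shows that $S^{f,s}_\mathcal{G}$ is a smooth submanifold of $\mathcal{GM}^s$.

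For property~(1), the $\Isom(\mathcal{G})$-invariance of $f$ guarantees that every $F \in \Isom_H(\mathcal{G})$ preserves the weighted measure $e^{-f}dV_g$ and therefore the $L^2_f$ pairing; since $F$ also fixes $\mathcal{G}$ and commutes with both $\alpha_\mathcal{G}$ and $\alpha^*_{\mathcal{G},f}$, it maps $\ker(\alpha^*_{\mathcal{G},f})$ to itself, and equivariance of the exponential-type chart gives $F\cdot S^f_\mathcal{G} = S^f_\mathcal{G}$. Property~(2) I would establish by the usual compactness-plus-injectivity argument of Ebin: if some $F \in \GDiff_H \setminus \Isom_H(\mathcal{G})$ moved $S^f_\mathcal{G}$ to a slice intersecting itself, one could produce a sequence $F_n \notin \Isom_H(\mathcal{G})$ with $F_n \cdot \mathcal{G}_n \in S^f_\mathcal{G}$ and $\mathcal{G}_n \to \mathcal{G}$; compactness of $\Isom_H(\mathcal{G})$ (inherited from compactness of the ordinary isometry group of a compact Riemannian manifold) would yield a subsequential limit $F_n \to F_\infty \in \Isom_H(\mathcal{G})$, and injectivity of the chart along $\ker(\alpha^*_{\mathcal{G},f})$ would force $F_n \in \Isom_H(\mathcal{G})$ for $n$ large, a contradiction. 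The ILH statement follows by performing the construction at every Sobolev level $s'\ge s$ and using elliptic regularity of $\alpha^*_{\mathcal{G},f}$ to check that the resulting slices are compatible under the inclusions $\mathcal{GM}^{s'+1}\hookrightarrow \mathcal{GM}^{s'}$, so that $S^f_\mathcal{G}$ is obtained as the inverse limit.

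The main obstacle I expect is the intersection property~(2): it is the only place where a genuinely geometric input beyond the adjoint computation is required, and one must verify that the compactness of $\Isom_H(\mathcal{G})$ together with the implicit-function chart cooperate cleanly once the ambient $L^2$ structure is twisted by $e^{-f}$. The remaining ingredients — the adjoint formula for $\alpha^*_{\mathcal{G},f}$, the ellipticity of $\alpha_\mathcal{G}\circ\alpha^*_{\mathcal{G},f}$, and the Hilbert-manifold implicit function theorem — should be careful but routine adaptations of the $f=0$ case treated in \cite{Rubio_2019}.
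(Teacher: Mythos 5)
Your overall architecture---Sobolev completions, the $f$-twisted formal adjoint of the infinitesimal orbit map, an orthogonal splitting of $T_\mathcal{G}\mathcal{GM}$, an equivariant exponential chart, and Ebin's compactness argument for the intersection property---is the same as the paper's, which likewise adapts \cite{Rubio_2019} and \cite{MR0267604}. But you gloss over the two points that are the actual content of the paper's proof. First, the pointwise splitting $T_\mathcal{G}\mathcal{GM}=\ker(\alpha^*_{\mathcal{G},f})\oplus\im(\alpha_\mathcal{G})$ is not where the difficulty lies: what is needed is that the normal bundle over the whole orbit is a \emph{smooth} ILH subbundle, and since $(\cdot,\cdot)_f$ is only a weak Riemannian metric, orthogonal projection onto a closed subspace is not automatically smooth in the relevant topologies. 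The paper handles this exactly as Ebin does: it introduces the auxiliary operator $\textbf{B}(\beta)=(0,d\beta)$, forms the complex $\Omega^0\to\Gamma(TM\oplus T^*M)\to\Gamma(S^2M)\times\Omega^2$, takes the Green operator $\mathbb{G}$ of $\textbf{A}^*_f\textbf{A}+\textbf{B}\textbf{B}^*_f$, and realizes the normal bundle as the kernel of the smooth projection $P=\textbf{A}\circ\mathbb{G}\circ\textbf{A}^*_f$ transported along the orbit. Your phrase ``standard elliptic theory then produces an $L^2_f$-orthogonal Hodge-type splitting'' papers over precisely this step.

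Second, you linearize the $\GDiff_H$-action and assert that $\alpha_\mathcal{G}\circ\alpha^*_{\mathcal{G},f}$ is an elliptic second-order operator, but the Lie algebra $\mathfrak{gdiff}_H=\{(X,\omega): d(i_XH+\omega)=0\}$ is a differentially constrained subspace of $\Gamma(TM)\times\Omega^2$, not the space of sections of a vector bundle, so ellipticity on it does not parse. The paper circumvents this by working with $\textbf{A}(X+\alpha)=(L_Xg,L_Xb+i_XH+d\alpha)$ defined on all of $\Gamma(TM\oplus T^*M)$, which parametrizes the tangent space to the $\GDiff^e_H$-orbit, and only afterwards passing to the $\GDiff_H$-orbit by splitting off the finite-dimensional space of harmonic $2$-forms via $\ker\textbf{A}^*_f=\ker(\textbf{A}'_f)^*\oplus\widetilde{\mathcal{H}}^2$ and the equivariantly extended projection $\widetilde{\textbf{I}}_0$. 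Neither repair is deep, but without them the smoothness of the slice and the implicit-function-theorem step do not follow from what you have written. (Your sketches of properties (1)--(3) are consistent with the intended argument; the paper itself defers these to the final step of Theorem 4.6 of \cite{Rubio_2019} once the smooth $f$-twisted normal bundle is in hand.)
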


Suppose $(g,H,f)$ is a steady gradient generalized Ricci soliton. By taking the background 3-form as $H$, we may consider a generalized metric $\mathcal{G}=\mathcal{G}(g,0)$. Since $f$ is $\Isom(\mathcal{G})$ invariant, we are able to apply \Cref{T}. In particular, in the generalized Einstein case, we decompose the variation space as 
\begin{align*}
    \Gamma(S^2M)\times \Omega^2=(\mathcal{V}+\mathcal{V}_1)\oplus(\mathcal{V}^\perp\cap \mathcal{V}_1^\perp)
\end{align*}
where subspaces $\mathcal{V}$  and $\mathcal{V}_1$ are given in (\ref{27}) and (\ref{29}). To finish the proof of \Cref{T2}, we need to prove that the second variation is nonpositive on each part and cross terms. When $n=3$, the main difficulty lies in the $\mathcal{V}_1$ part. To prove that the second variation is nonpositive on $\mathcal{V}_1$, we need to use the eigenfunction of Laplacian. On the other hand, in the higher dimensional case, the second variation formula is more complicated. Fortunately, it will be simpler if we consider Bismut flat, Einstein manifolds. First, the second variation is nonpositive on $\mathcal{V}_1$ by the Cauchy--Schwarz inequality. Second, the Bochner technique suggests to us that the second variation is also nonpositive on the subspace $\mathcal{V}^\perp\cap \mathcal{V}_1^\perp$. Last, following these two observations, we prove our result by choosing a proper decomposition of $dK$.

By direct computation, one can deduce that on 3-dimensional generalized Einstein manifold $(M,g,H)$, the operator $\triangle_G$ is negative semidefinite on $TT_g$ so we have the following corollary.
\begin{corollary}
Any 3-dimensional generalized Einstein manifold $(M,g,H)$ is linearly stable.
\end{corollary}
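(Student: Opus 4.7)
The plan is to reduce to \Cref{T2}, which guarantees linear stability for a Bismut-flat Einstein manifold once $\triangle_G|_{TT_g}$ is shown to be negative semidefinite. So I would first argue that any three-dimensional generalized Einstein manifold is Bismut-flat Einstein, and then verify the operator condition by a direct computation exploiting constant sectional curvature.

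For the structure result, observe that in dimension three every $3$-form is $H = \alpha\, dV_g$ with $\alpha\in C^{\infty}(M)$, and closedness is automatic. The soliton equation \eqref{4} with $X=0$ gives $d^*_g H = 0$, which via Hodge duality in dimension three reduces to $d\alpha = 0$, so $\alpha \equiv c$ is constant. In an orthonormal frame $H^2_{ij} = c^2 \epsilon_{iab}\epsilon_{jab} = 2c^2 g_{ij}$, so $\Rc = \tfrac{1}{4}H^2$ forces $\Rc = \tfrac{c^2}{2}g$. Since Einstein in dimension three is equivalent to constant sectional curvature, $g$ has constant sectional curvature $k = c^2/4 \geq 0$. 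Because $c$ is constant and $dV_g$ is parallel, $\nabla H = 0$, and the Bismut curvature reduces to
\[
R^+(X,Y)Z \;=\; R(X,Y)Z + \tfrac{1}{4}\bigl[H(X,H(Y,Z)) - H(Y,H(X,Z))\bigr].
\]
In three dimensions $H(U,V) = c\, U\times V$, so the vector triple product identity shows the correction equals $-k\bigl(g_{ik}g_{jl} - g_{il}g_{jk}\bigr)$, exactly cancelling the constant-curvature Riemann tensor. Hence $R^+ \equiv 0$, and $(M,g,H)$ is Bismut-flat Einstein. (Alternatively, \Cref{C3} identifies $(M,g,H)$ with a quotient of the standard $(S^3, g_{S^3}, H_{S^3})$.)

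For the operator estimate, on constant curvature $k$ one has $\mathring{R}h = -kh$ for any $h\in TT_g$, and $\Rc\circ h + h\circ\Rc = 2(n-1)k\,h = 4k\,h$ in $n=3$. Substituting into the definition of $\triangle_G$ yields
\[
\triangle_G h \;=\; \tfrac{1}{2}\triangle h - 3k\,h + 2k\,h \;=\; \tfrac{1}{2}\triangle h - k\,h.
\]
Integrating by parts against $h$,
\[
\int_M \langle \triangle_G h, h\rangle\, dV_g \;=\; -\tfrac{1}{2}\int_M |\nabla h|^2\, dV_g \;-\; k\int_M |h|^2\, dV_g \;\leq\; 0,
\]
since $k \geq 0$. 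Applying \Cref{T2} then concludes that $(M,g,H)$ is linearly stable.

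The only real pitfall I anticipate is the sign bookkeeping in the Bismut curvature step: the cancellation $R^+ \equiv 0$ requires matching the paper's choice of $\nabla^{\pm} = \nabla \pm \tfrac{1}{2}g^{-1}H$ with the orientation of $H$ relative to the cross-product identity, so that the correction cancels rather than doubles the Riemann tensor, and one must also double-check the sign convention for $\mathring{R}$ used in the definition of $\triangle_G$. Once conventions are pinned down, every step is a direct computation.
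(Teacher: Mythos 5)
Your proposal is correct and follows essentially the same route as the paper: both reduce the claim to checking that $\triangle_G|_{TT_g}$ is negative semidefinite (so that \Cref{T2} applies), and both verify this using the structure from \Cref{C3} that $H$ is a constant multiple of $dV_g$ and $g$ has constant nonnegative sectional curvature. The only cosmetic difference is that you carry out the curvature computation invariantly ($\mathring{R}h=-kh$, $\Rc\circ h+h\circ\Rc=4kh$, hence $\triangle_G h=\tfrac12\triangle h-kh$) whereas the paper normalizes $H=2dV_g$ and computes $3R_{ijkl}h_{il}h_{jk}+R_{jl}h_{lk}h_{jk}=-|h|^2$ in components; the conclusions agree.
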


The last topic in this work is dynamical stability. One of the main topics about stability is that one expects that dynamical stability and linear stability are equivalent. Many people tried to prove this result in some cases. To our understanding so far, in the Ricci flow case, these two stability conditions are equivalent on 
compact Ricci flat manifolds \cite{H1,H2,MR2219268}
, Einstein manifolds \cite{Kroencke2014,K4} and gradient shrinking Ricci solitons  \cite{K3}. For generalized geometry, Raffero and Vezzoni \cite{DG} proved that dynamical stability and linear stability are equivalent for the generalized Ricci flow under the $H=0$ assumption. In this work, we would like to generalize the case to the steady gradient generalized Ricci solitons.

The key step of the proof is to use the Lojasiwicz--Simon inequality to control the growth of $\lambda$. In \cite{MR3211041} Theorem 6.3, Colding and Minicozzi developed a general version of the Lojasiwicz--Simon inequality. To satisfy the conditions of their theorem, we may use the \Cref{T} to show that our second variation operator is Fredholm. Finally, we can follow a similar argument of the dynamical stability of Einstein manifolds cases to prove the following.

\begin{theorem}\label{T3}
Let $(M^n,g_c,H_c,f_c)$ be a steady generalized gradient Ricci soliton with its corresponding generalized metric $\mathcal{G}_c=\mathcal{G}_c(g_c,0)$ and a background $3$-form $H_c$. Suppose $(M^n,g_c,H_c,f_c)$ is linearly stable and $k\geq 3$. Then, for every $C^k$-neighborhood $\mathcal{U}=B^k_\epsilon$ of $(g_c,0)$, there exists some $C^{k+2}$-neighborhood $\mathcal{V}$ such that for any GRF $(g(t),b(t))$ starting at $\mathcal{V}$, there exists a family of diffeomorphisms $\{\varphi_t\}$ such that
\begin{align*}
    \|(\varphi^*_tg_t,\varphi^*_tH_t)-(g_c,H_c)\|_{C^{k-1}_{g_c}}<\epsilon \quad \text{ for all $t$}
\end{align*}
and $(\varphi^*_t g_t,\varphi^*_tH_t)$ converges to $(g_\infty,H_\infty)$ at a polynomial rate with $\lambda(g_c,H_c)=\lambda(g_\infty,H_\infty)$.
\end{theorem}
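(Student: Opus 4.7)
The plan is to follow the Łojasiewicz--Simon scheme outlined in the introduction. First I would use the generalized slice \Cref{T} to pass from the full space $\mathcal{M}\times \Omega^2$ to the slice $S^{f_c}_{\mathcal{G}_c}$ at the soliton. Because $\lambda$ is $\GDiff_H$-invariant, working on the slice removes the infinite-dimensional gauge kernel of the second variation operator $\mathcal{N}$ from the corollary: on $T_{\mathcal{G}_c}S^{f_c}_{\mathcal{G}_c}$ the $\divg^*_f\divg_f h$ gauge-breaking term disappears, the principal parts of $A$ and $B$ are genuine (weighted) Laplacians, and $v_{h,K}$ is determined by $(h,K)$, so $\mathcal{N}|_{\text{slice}}$ is elliptic and self-adjoint. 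Standard elliptic theory then promotes it to a Fredholm operator between the appropriate Sobolev completions, which is the hypothesis needed in \cite{MR3211041}.

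Second, I would invoke Colding--Minicozzi Theorem 6.3 to obtain $\theta\in(0,1/2]$, $C>0$, and a $C^{k+2}$-neighborhood $\mathcal{W}\subset S^{f_c}_{\mathcal{G}_c}$ of $(g_c,H_c)$ such that
\[
|\lambda(g,H)-\lambda(g_c,H_c)|^{1-\theta}\leq C\,\|\nabla_g \lambda(g,H)\|_{L^2_f}\qquad\text{for all }(g,H)\in\mathcal{W},
\]
where $\nabla_g\lambda$ denotes the gradient with respect to the weighted inner product \emph{(\ref{25})}; its components are precisely the soliton error tensors in \emph{(\ref{4})}.

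Third, I would run the GRF from data in a suitably small $C^{k+2}$-neighborhood $\mathcal{V}$ of $(g_c,0)$. Using the local cross section $\chi$ from \Cref{T} together with a DeTurck-type correction in the $b$-direction, I would produce a time-dependent family $\varphi_t\in\GDiff_H$ so that the pulled-back flow $(\varphi_t^*g_t,\varphi_t^*H_t)$ lies on the slice. Monotonicity gives $\tfrac{d}{dt}\lambda=\|\nabla_g\lambda\|_{L^2_f}^2$, and combining with the Łojasiewicz--Simon estimate yields the differential inequality
\[
\tfrac{d}{dt}\bigl(\lambda_c-\lambda\bigr)\leq -c\bigl(\lambda_c-\lambda\bigr)^{2-2\theta},
\]
which forces polynomial decay of $\lambda_c-\lambda(t)$ and bounds the total $L^2$ arclength of the trajectory by $C(\lambda_c-\lambda(0))^{\theta}$. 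Standard parabolic smoothing for GRF promotes $L^2$ control to $C^{k-1}$ control, so that, shrinking $\mathcal{V}$ once more if necessary, $(\varphi_t^*g_t,\varphi_t^*H_t)$ stays inside the prescribed $C^k$-neighborhood $\mathcal{U}$ for all time and converges at a polynomial rate to a limit $(g_\infty,H_\infty)$; this limit must be a critical point with $\lambda(g_\infty,H_\infty)=\lambda_c$, since otherwise monotonicity combined with linear stability would push $\lambda$ past the soliton value.

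The main obstacle is the Fredholm verification. Unlike the pure Ricci flow case, $\mathcal{N}$ is a coupled system on $\Gamma(S^2M)\times \Omega^2\times C^\infty(M)$ with cross-maps $C,D$ mixing metric and form directions and with a non-local component coming from the constraint defining $v_{h,K}$; one must check that the tangent space $T_{\mathcal{G}_c}S^{f_c}_{\mathcal{G}_c}$ is exactly the $(\cdot,\cdot)_f$-orthogonal complement of the combined diffeomorphism and $b$-field gauge orbit, so that the restricted operator is genuinely elliptic and self-adjoint. A secondary technical point is implementing the DeTurck correction in the combined $(g,b)$ setting so that the modified flow is well-posed, the diffeomorphisms $\varphi_t$ remain uniformly bounded, and the bootstrap from $L^2$ convergence to $C^{k-1}$ convergence closes with only a $C^{k+2}$ initial smallness assumption.
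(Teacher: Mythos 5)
Your strategy is the same as the paper's: prove a Łojasiewicz--Simon inequality for $\lambda$ by combining the $f$-twisted slice theorem (which makes the second variation operator an elliptic, hence Fredholm, system on the slice) with Colding--Minicozzi, then gauge-fix the flow, turn monotonicity plus the inequality into an arclength bound, and integrate to get polynomial convergence. The substantive difference is the gauge. The paper never projects the flow onto the slice; the slice is used only to verify the Fredholm hypothesis. For $t\geq 1$ it instead pulls back by the diffeomorphisms generated by $-\nabla_{g_t}f_{(g_t,H_t)}$, the gradient of the minimizer, so that the modified flow is literally $\partial_t\widetilde g=-2(\widetilde{\Rc}-\tfrac14\widetilde H^2+\nabla^2\widetilde f)$, $\partial_t\widetilde b=-(d^*\widetilde H+i_{\nabla\widetilde f}\widetilde H)$; its velocity is then exactly the $L^2_f$-gradient of $\lambda$, which is the quantity the Łojasiewicz--Simon inequality controls. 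Your slice-projection/DeTurck gauge needs the additional verification that the velocity of the projected flow coincides with $\nabla\lambda$: the orthogonal projection of the GRF velocity onto the slice tangent is $2\nabla\lambda$ because $\nabla\lambda$ is orthogonal to the gauge orbit, but the projected flow's velocity is not a priori that orthogonal projection, and the resulting $\varphi_t$ must still be shown to be well defined for all time. The paper's choice sidesteps this. Two further points you compress too much: the upgrade from $L^2$ to $C^{k-1}$ control is not generic parabolic smoothing but the combination of Shi-type derivative estimates for GRF with Hamilton's interpolation inequalities, yielding $\|\partial_t(\widetilde g,\widetilde b)\|_{C^{k-1}}\leq C\|\partial_t(\widetilde g,\widetilde b)\|_{L^2}^{1-\beta}$ with $\beta$ small enough that $\theta=1-\alpha(1+\beta)>0$ — without this precise exponent the arclength integral does not close; and the interval $t\in[0,1]$ must be handled by a separate continuity estimate, which is where the $C^{k+2}$ initial smallness is consumed. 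Finally, a small correction: on the slice $(\divg_f h)_l=\tfrac12K^{ij}H_{ijl}$ rather than $0$, so $\divg_f^*\divg_f h$ does not disappear but becomes a first-order term in $K$ (harmless for ellipticity); what becomes constant is $v_{h,K}$.
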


Note that this theorem tells us that up to diffeomorphisms our solution of $(g_t,H_t)$ will converge at some point $(g_\infty,H_\infty)$ and this point may not be $(g_0,H_0)$. For example, we prove that the $S^3$ is linearly stable so \Cref{T3} only suggests that solutions of GRF near $(g_{S^3},H_{S^3})$ will converge to a steady gradient generalized Ricci soliton near $(g_{S^3},H_{S^3})$.

Besides the stability result, a similar argument also deduces that if $\lambda$ is not linearly stable, then it should be dynamically unstable.

\begin{theorem}\label{T4}
Let $(M^n,g_c,H_c,f_c)$ be a steady generalized gradient Ricci soliton with its corresponding generalized metric $\mathcal{G}_c=\mathcal{G}_c(g_c,0)$ and a background $3$-form $H_c$. Suppose $(g_c,H_c)$ is not a local maximum of $\lambda$ then there exists a GRF $(g_t,H_t)$ and a family of diffeomorphism $\{\varphi_t\}$, $t\in (-\infty,0]$ such that  
\begin{align*}
    (\varphi^*_t g_t,\varphi^*_t H_t)\longrightarrow (g_c,H_c) \text{  as $t\to-\infty$.}
\end{align*}
\end{theorem}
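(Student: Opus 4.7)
The plan is to adapt the instability construction of Haslhofer--M\"uller and Kr\"oncke to the generalized Ricci flow. Since $(g_c,H_c)$ is not a local maximum of $\lambda$, pick a sequence $(g_j,b_j)\to(g_c,0)$ in $C^{k+2}$ with
\[
\lambda(g_j,\,H_c+db_j)>\lambda(g_c,H_c).
\]
For each $j$, run the GRF $(g^j_t,b^j_t)$ starting at $(g_j,b_j)$; setting $H^j_t:=H_c+db^j_t$, the monotonicity from \cite{P} gives $\lambda(g^j_t,H^j_t)\geq \lambda(g_j,H^j_0)>\lambda(g_c,H_c)$ for all $t\geq 0$.

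Fix a small $\delta>0$ and apply \Cref{T} at $\mathcal{G}_c$ to pull back the flow by generalized diffeomorphisms $\Phi^j_t$ into the slice $S^{f_c}_{\mathcal{G}_c}$, as long as the flow stays in a fixed neighborhood. Write $(\tilde g^j_t,\tilde H^j_t)$ for the resulting trajectory, and let $\tau_j\in(0,\infty]$ be the first exit time from the $\delta$-ball in $C^{k-1}_{g_c}$ around $(g_c,H_c)$. I claim $\tau_j<\infty$: the Fredholm property of the second-variation operator granted by \Cref{T}, together with the Colding--Minicozzi abstract Lojasiwicz--Simon theorem invoked in the proof of \Cref{T3}, yields $|\lambda-\lambda(g_c,H_c)|^{2-\theta}\leq C\|\nabla\lambda\|^2$ for some $\theta\in(0,1/2]$ in a neighborhood of $(g_c,H_c)$. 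Along the gradient flow this becomes $u'\geq c\,u^{2-\theta}$ for $u(t):=\lambda(\tilde g^j_t,\tilde H^j_t)-\lambda(g_c,H_c)>0$, which forces finite-time blow-up of $u$; but $u$ is bounded on the closed $\delta$-ball by continuity of $\lambda$, giving a contradiction, hence $\tau_j<\infty$. Furthermore $\tau_j\to\infty$ since $(g_j,H_j)\to(g_c,H_c)$ and the stationary solution at $(g_c,H_c)$ never leaves any fixed neighborhood in finite time.

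Translate time and extract a limit. The shifted flows $(\bar g^j_s,\bar H^j_s):=(\tilde g^j_{s+\tau_j},\tilde H^j_{s+\tau_j})$ on $[-\tau_j,0]$ stay inside the $\delta$-neighborhood, and parabolic regularity for GRF provides uniform $C^\infty_{\mathrm{loc}}$ estimates on compact time intervals, so a diagonal Arzel\`a--Ascoli argument yields a subsequential smooth limit $(g_\infty(s),H_\infty(s))$ on $(-\infty,0]$, an ancient GRF solution satisfying $\|(g_\infty(0),H_\infty(0))-(g_c,H_c)\|_{C^{k-1}}=\delta$, together with a diffeomorphism family $\{\varphi_s\}$ realizing it as the pull-back of an honest GRF $(g_s,H_s)$.

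Finally, to obtain $(g_\infty(s),H_\infty(s))\to(g_c,H_c)$ as $s\to-\infty$, note that $\lambda(g_\infty(s),H_\infty(s))$ is non-decreasing and bounded above, so $\int_{-\infty}^0\|\nabla\lambda(g_\infty(s),H_\infty(s))\|^2\,ds<\infty$; combined with the Lojasiwicz--Simon inequality applied backwards in time in the spirit of \Cref{T3}, this forces full convergence of the trajectory to a critical point $(g'_c,H'_c)$ in the $\delta$-ball, and shrinking $\delta$ from the outset while absorbing any motion along the premoduli space of nearby critical points into the family $\{\varphi_s\}$ identifies the limit with $(g_c,H_c)$. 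The main obstacle is ensuring that the generalized diffeomorphisms $\Phi^j_t$ produced by the slice theorem vary coherently enough in $j$ and $t$ that the diagonal extraction produces a single smooth family $\{\varphi_s\}$ on the limit, and that the backward Lojasiwicz--Simon argument honestly identifies the limiting critical point with $(g_c,H_c)$ rather than only with some nearby soliton; this is exactly where the ILH structure of \Cref{T} and the gauge-patching developed in the proof of \Cref{T3} are essential.
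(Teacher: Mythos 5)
Your overall skeleton coincides with the paper's: choose competitors $(g_j,b_j)\to(g_c,0)$ with $\lambda>\lambda(g_c,H_c)$, gauge-fix the flows, show the gauge-fixed trajectories must exit a fixed small ball at times $\tau_j\to\infty$ using the Lojasiewicz--Simon inequality (your ODE $u'\geq c\,u^{2-\theta}$ forcing finite-time blow-up is exactly the mechanism behind the paper's inequality (\ref{42})), translate time to the exit times, and extract an ancient limit by compactness. The one structural difference before the end --- you project onto the slice $S^{f_c}_{\mathcal{G}_c}$ from \Cref{T}, whereas the paper uses the $(-\nabla f,0)$-gauge-fixed modified flow (\ref{41}) --- is harmless, since both are gauge fixings under which $(g_c,0)$ becomes a genuine fixed point and $\tfrac{d}{dt}\lambda=\|\nabla\lambda\|_{L^2}^2$ still holds.

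The genuine gap is in your final step, the identification of the backward limit. Your argument produces convergence of the ancient trajectory to \emph{some} critical point $(g_c',H_c')$ inside the $\delta$-ball, and you propose to conclude by ``shrinking $\delta$ from the outset'' and ``absorbing motion along the premoduli space into $\{\varphi_s\}$.'' Neither device works: shrinking $\delta$ changes the exit times and hence produces a \emph{different} ancient flow for each $\delta$, each converging backward to a possibly different critical point within distance $\delta$ of $(g_c,H_c)$, so no single flow with backward limit exactly $(g_c,H_c)$ is obtained; and nearby critical points need not lie on the $\GDiff$-orbit of $(g_c,H_c)$ --- the paper points out (citing Streets) that the round $S^3$ sits inside a nontrivial smooth family of steady generalized solitons --- so motion along the premoduli space cannot be absorbed into diffeomorphisms. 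The paper closes this gap quantitatively: integrating $\tfrac{d}{dt}(\lambda-\lambda_c)^{\theta}\geq C\,\|(\partial_t\widetilde g,\partial_t\widetilde b)\|_{C^{k-2}}$ from time $1$ to time $t+t_i$ bounds the total displacement of the gauge-fixed flow by $C\bigl(\lambda(\widetilde g_i(t+t_i),\widetilde H_i(t+t_i))-\lambda(g_c,H_c)\bigr)^{\theta}$, which by (\ref{42}) is small \emph{uniformly in $i$} once $t\ll 0$; combined with $(\widetilde g_i(1),\widetilde b_i(1))\to(g_c,0)$ and a triangle inequality, this pins the backward limit to $(g_c,H_c)$ itself. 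Some version of this two-sided use of the Lojasiewicz--Simon inequality (an upper bound on how large $\lambda-\lambda_c$ can be at time $t+t_i$, not just the forward monotonicity) is indispensable, and it is missing from your write-up.
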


The layout of this paper is as follows: in Section 2, we will mention some preliminaries regarding Courant algebroids, the generalized slice theorem, and the generalized Ricci flow. In Section 3, we will discuss the variation formulas including the first variation formula and the second variation formula. In Section 4, we will prove the main results relative to linear stability. In Section 5, we will show that the dynamical stability and the linear stability are equivalent on the steady gradient generalized Ricci solitons.
\\

\textbf{Acknowledgements:} This work is written when the author is a second-year math Ph.D. student at the University of California-Irvine. I am grateful to my advisor Jeffrey D.~Streets for his helpful advice. His suggestions play an important role in this work. 

\section{Generalized Geometry}

\subsection{Courant Algebroids and Generalized Metrics}
\begin{defn}\label{D2}
A \emph{Courant algebroid} is a vector bundle $E\longrightarrow M$ with a nondegenerate bilinear form $\langle \cdot,\cdot\rangle$, a bracket $[\cdot,\cdot]$ on $\Gamma(E)$ and a bundle map $\pi:E\longrightarrow TM$ satisfies that for all $a,b,c\in\Gamma(E)$, $f\in C^\infty(M)$,
\begin{align*}
    &\text{(1) \quad} [a,[b,c]]=[[a,b],c]+[b,[a,c]].
    \\&\text{(2) \quad} \pi[a,b]=[\pi(a),\pi(b)].
     \\&\text{(3) \quad} [a,fb]=f[a,b]+\pi(a)fb.
      \\&\text{(4) \quad} \pi(a)\langle b,c\rangle=\langle [a,b],c \rangle+\langle a,[b,c] \rangle.
       \\&\text{(5) \quad}[a,b]+[b,a]=\mathcal{D}\langle a,b \rangle 
\end{align*}
where $\mathcal{D}: C^\infty(M)\to \Gamma(E)$ is given by $\mathcal{D}(\phi)\coloneqq\pi^*(d\phi)$. A Courant algebroid $E$ is called \emph{exact} if we have the following exact sequence of vector bundles 
\[  \begin{tikzcd}
  0 \arrow[r] & T^*M  \arrow[r, "\pi^*"] & E \arrow[r, "\pi"] & TM  \arrow[r] & 0 .
\end{tikzcd}
\]
\end{defn}

The most common and important example of Courant algebroids is $TM\oplus T^*M$. In this case, we define a nondegenerate bilinear form $\langle\cdot,\cdot\rangle$ and a bracket $[\cdot,\cdot]$ on $TM\oplus T^*M$ by
\begin{align*}
    \langle X+\xi,Y+\eta \rangle&\coloneqq\frac{1}{2}(\xi(Y)+\eta(X)),
    \\ [X+\xi,Y+\eta]_H&\coloneqq[X,Y]+L_X\eta-i_Y d\xi+i_Yi_XH
\end{align*}
where $X,Y\in TM$,  $\xi,\eta\in T^*M$ and $H$ is a 3-form. Define $\pi$ to be the standard projection, one can check directly that $(TM\oplus T^*M)_H\coloneqq(TM\oplus T^*M,\langle\cdot,\cdot\rangle,[\cdot,\cdot]_H,\pi)$ satisfies Courant algebroid conditions when $H$ is a closed 3-form. 

In the following, we will always assume that $E$ is an exact Courant algebroid. It is proved in \cite{GRF} Proposition 2.10, that every exact Courant algebroid with an isotropic splitting is isomorphic to $(TM\oplus T^*M)_H$ with some proper $H$. The complete statement is as follows.
\begin{proposition}[\cite{GRF} Proposition 2.10]\label{P1}
Given an exact Courant algebroid $E$ with a isotropic splitting $\sigma$, $E\cong_\sigma (TM\oplus T^*M)_H$ via the isomorphism $F:TM\oplus T^*M\longrightarrow E$ defined by
\begin{align*}
    F(X+\xi)= \sigma X+\frac{1}{2}\pi^*\xi \quad X\in TM,\xi\in T^*M. 
\end{align*}
Here the closed 3-from $H$ is given by
\begin{align}
    H(X,Y,Z)=2\langle [\sigma X,\sigma Y], \sigma Z \rangle \quad X,Y,Z \in TM. \label{6}
\end{align}

\end{proposition}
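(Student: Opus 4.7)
The proof has three parallel tasks: showing that $F$ is a bundle isomorphism intertwining anchor and pairing, that $H$ defined by (\ref{6}) is a closed $3$-form, and that $F$ intertwines brackets. For the first task, since $\sigma$ is a right inverse of $\pi$, exactness of $0\to T^*M\to E\to TM\to 0$ yields a direct-sum decomposition $E=\sigma(TM)\oplus\pi^*(T^*M)$, so $F$ is a fiberwise linear bijection, and $\pi_E\circ F=\operatorname{pr}_{TM}$ follows from $\pi\sigma=\mathrm{id}$ and $\pi\pi^*=0$. A short computation using isotropy of $\sigma$, the identity $\pi\pi^*=0$, and the pairing relation $\langle a,\pi^*\phi\rangle_E=\tfrac12\phi(\pi a)$ (which follows from the adjoint definition of $\pi^*$ with respect to $\langle\cdot,\cdot\rangle_E$) reproduces $\langle F(X+\xi),F(Y+\eta)\rangle_E=\tfrac12(\xi(Y)+\eta(X))$; the factor $\tfrac12$ built into the definition of $F$ is precisely what makes the convention on $(TM\oplus T^*M)_H$ match.

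For the second task, tensoriality of $H$ in each slot follows from the Leibniz rule (axiom (3)), with the non-tensorial correction $\pi(\sigma\cdot)f\,\langle\sigma X,\sigma Y\rangle$ dropping out by isotropy of $\sigma$. Total skew-symmetry is a consequence of the symmetry of $\langle\cdot,\cdot\rangle$ combined with axiom (5) applied to two isotropic sections. For closedness $dH=0$, one expands $dH(W,X,Y,Z)$ via Cartan's formula, applies axiom (4) to convert each directional derivative $\pi(\sigma\cdot)\langle[\sigma\cdot,\sigma\cdot],\sigma\cdot\rangle$ into pairings of nested brackets, and then uses the Jacobi identity (axiom (1)) to collapse the alternating sum of iterated brackets.

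The core step is verifying $F([X+\xi,Y+\eta]_H)=[F(X+\xi),F(Y+\eta)]_E$. By $\mathbb{R}$-bilinearity and axiom (3) it suffices to treat the four cases $(\sigma X,\sigma Y)$, $(\sigma X,\pi^*\eta)$, $(\pi^*\xi,\sigma Y)$, and $(\pi^*\xi,\pi^*\eta)$. The first case is the heart of the calculation: axiom (2) forces $[\sigma X,\sigma Y]_E=\sigma[X,Y]+\tfrac12\pi^*\alpha$ for some $\alpha\in T^*M$ (since the remainder must lie in $\ker\pi=\operatorname{Im}\pi^*$), and pairing against $\sigma Z$ using definition (\ref{6}) identifies $\alpha$ with $i_Yi_XH$. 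The fourth case vanishes by axiom (5) applied to $\langle\pi^*\xi,\pi^*\eta\rangle=0$. The main obstacle is the two mixed cases: one must extract $L_X\eta$ from $[\sigma X,\pi^*\eta]_E$ and $-i_Y d\xi$ from $[\pi^*\xi,\sigma Y]_E$ using only the axioms. The strategy for the former is to pair with an arbitrary $\sigma Z$ and apply axiom (4) to rewrite $\langle[\sigma X,\pi^*\eta],\sigma Z\rangle$ as $\pi(\sigma X)\langle\pi^*\eta,\sigma Z\rangle-\langle\pi^*\eta,[\sigma X,\sigma Z]\rangle$, which feeds back through the first case to produce exactly $\tfrac12(L_X\eta)(Z)$; the latter mixed case then follows from axiom (5) combined with the former. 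Assembling the four cases reproduces the twisted Dorfman bracket $[X,Y]+L_X\eta-i_Yd\xi+i_Yi_XH$, completing the proof.
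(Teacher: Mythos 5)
The paper does not prove this statement; it is quoted verbatim from [GRF, Proposition 2.10], so there is no internal proof to compare against. Your overall strategy is the standard one and is sound: decompose $E=\sigma(TM)\oplus\pi^*(T^*M)$, check the pairing, characterize $[\sigma X,\sigma Y]-\sigma[X,Y]$ as an element of $\ker\pi=\operatorname{Im}\pi^*$ and identify it via (\ref{6}), and extract the mixed brackets by pairing against $\sigma Z$ and invoking axiom (4). That said, three of your stated justifications do not actually close the step they are attached to.

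First, the normalization. With your relation $\langle a,\pi^*\phi\rangle=\tfrac12\phi(\pi a)$ the pairing computation gives $\tfrac14(\xi(Y)+\eta(X))$, not $\tfrac12(\xi(Y)+\eta(X))$. The convention forced by the statement (and by testing axiom (5) on the standard model, where $[a,a]=d(\xi(X))$ and $\langle a,a\rangle=\xi(X)$ force $\pi^*\mu=2\mu$) is $\langle\pi^*\phi,a\rangle=\phi(\pi a)$; the factor $\tfrac12$ lives in $F$, not in the pairing relation. Second, axiom (5) together with isotropy gives only antisymmetry of $H$ in the first two slots; antisymmetry in the last two slots requires axiom (4), namely $\langle[\sigma X,\sigma Y],\sigma Z\rangle+\langle[\sigma X,\sigma Z],\sigma Y\rangle=\pi(\sigma X)\langle\sigma Y,\sigma Z\rangle=0$. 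Third, the vanishing of $[\pi^*\xi,\pi^*\eta]$ does not follow from axiom (5) applied to $\langle\pi^*\xi,\pi^*\eta\rangle=0$: that only yields $[\pi^*\xi,\pi^*\eta]=-[\pi^*\eta,\pi^*\xi]$. You need to observe that $\pi[\pi^*\xi,\pi^*\eta]=0$ places the bracket in $\operatorname{Im}\pi^*$, then kill it by pairing with every $\sigma Z$ via axiom (4) (using $\pi\circ\pi^*=0$ and your already-established formula $[\pi^*\xi,\sigma Z]\in\operatorname{Im}\pi^*$) and nondegeneracy. Each fix is a one-line computation, so the proof is correct once these are patched.
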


Here, we introduce the definition of the automorphism group of Courant algebroid which we will study more in the next section.

\begin{defn}\label{D4}
Let $E$ be an exact Courant algebroid. The \emph{automorphism group} $\Aut(E)$ of $E$ is a pair $(f,F)$ where $f\in\Diff(M)$ and $F\colon E\to E$ is a bundle map such that for all $u,v\in \Gamma(E)$
\begin{align*}
     &\text{(1) \quad} \langle Fu,Fv \rangle=f_{*}\langle u,v\rangle.
    \\&\text{(2) \quad} [Fu,Fv]=F[u,v].
     \\&\text{(3) \quad} \pi_{TM}\circ F=f_{*}\circ \pi_{TM}.
\end{align*}
We say that two Courant algebroids $(E,\langle\cdot,\cdot\rangle,[\cdot,\cdot],\pi)$ and $(E',\langle \cdot,\cdot\rangle',[\cdot,\cdot]',\pi')$ are \emph{isomorphic} if there exists a bundle isomorphism $F:E\to E'$ such that 
\begin{align*}
     &\text{(1) \quad} \langle Fu,Fv \rangle'=\langle u,v\rangle.
    \\&\text{(2) \quad} [Fu,Fv]'=F[u,v].
     \\&\text{(3) \quad} \pi'\circ F= \pi.
\end{align*}
\end{defn}

By using \Cref{P1}, we can see that for any exact Courant algebroid $E$ with an isotropic splitting $\sigma$
\begin{align*}
    \Aut(E)\cong_{\sigma}\Aut(TM\oplus T^*M)_H \text{  for some closed 3-form $H$}.
\end{align*}
For convenience, we denote $\GDiff_H=\Aut(TM\oplus T^*M)_H$ and we have the following.
\begin{proposition}[\cite{GRF} Proposition 2.21]\label{P2}
Let $M$ be a smooth manifold and $H$ be a closed 3-form. We have
\begin{align*}
   \GDiff_H=\{(f,\overline{f}\circ e^B): f\in\Diff(M), B\in \Omega^2 \text{  such that  } f^*H=H-dB\}
\end{align*}
where
\begin{align*}
    \overline{f}&=\begin{pmatrix} f_{\star} & 0 \\ 0 & (f^*)^{-1} \end{pmatrix}: X+\alpha\longmapsto f_{*}X+(f^*)^{-1}(\alpha),
    \\ e^B&=\begin{pmatrix} Id & 0 \\ B & Id \end{pmatrix}: X+\alpha\longmapsto X+\alpha+i_XB  \qquad \text{  for any $X\in TM$ and $\alpha\in T^*M$}.
\end{align*}
The product of automorphisms is given by
\begin{align*}
    (f,F)\circ (f',F')=\overline{f\circ f'}\circ e^{B'+f'^*B} \quad \text{  where } F=\overline{f}\circ e^B,\quad F'=\overline{f'}\circ e^{B'}.
\end{align*}
\end{proposition}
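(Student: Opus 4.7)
The plan is to characterize an arbitrary $(f,F) \in \GDiff_H$ by applying the three axioms of \Cref{D4} in order, extracting the block form of $F$, and then using the bracket axiom to produce the relation $f^*H = H - dB$. The composition formula then drops out of a direct manipulation.

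First I would exploit axiom (3), $\pi_{TM} \circ F = f_{*} \circ \pi_{TM}$, which forces $F$ to preserve the subbundle $T^*M = \ker \pi_{TM}$ and to act on the quotient $TM$ by $f_{*}$. So there exist bundle maps (covering $f$) $A \colon TM \to T^*M$ and $C \colon T^*M \to T^*M$ with
\begin{align*}
    F(X + \alpha) = f_{*}X + A(X) + C(\alpha).
\end{align*}
Applying axiom (1) and using that $\langle T^*M, T^*M\rangle = 0$ and that the pairing between $TM$ and $T^*M$ is the canonical one, I would match the pure $T^*M \otimes T^*M$ and mixed terms to conclude that $C = (f^*)^{-1}$ and that $A$ is antisymmetric in a suitable sense, so that $A(X) = (f^*)^{-1}(i_X B)$ for a unique $B \in \Omega^2(M)$. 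Equivalently, $F = \overline{f}\circ e^B$, which establishes the shape claimed.

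Next comes the core step: axiom (2), compatibility with the $H$-twisted bracket. The cleanest way to handle this is to prove two short structural lemmas and then compose them. The first says that $e^B$ intertwines the $H$-bracket and the $(H - dB)$-bracket, that is
\begin{align*}
    [e^B u, e^B v]_H = e^B [u, v]_{H - dB},
\end{align*}
which is a direct expansion using $[X + \alpha, Y + \beta]_H = [X,Y] + L_X\beta - i_Y d\alpha + i_Y i_X H$ and the identity $L_X(i_Y B) - i_Y d(i_X B) - i_{[X,Y]} B = i_Y i_X dB$ (essentially Cartan's formula). The second lemma says $\overline{f}$ intertwines the $f^*H$-bracket with the $H$-bracket, which follows from naturality of $d$, $L_X$, and $i_X$ under pushforward by a diffeomorphism. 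Composing, $\overline{f}\circ e^B$ is a bracket automorphism of $(TM\oplus T^*M)_H$ if and only if $f^*H = H - dB$, giving the claimed characterization.

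Finally, for the group law, I would write out
\begin{align*}
    (\overline{f}\circ e^B)\circ(\overline{f'}\circ e^{B'})
\end{align*}
and pull $e^B$ past $\overline{f'}$ using the relation $e^B \circ \overline{f'} = \overline{f'}\circ e^{f'^*B}$, which follows immediately from how $f'^{*}$ interacts with interior multiplication. Combining exponentials, $e^{f'^*B}\circ e^{B'} = e^{B' + f'^*B}$ (the two 2-forms act by commuting lower-triangular matrices), yielding the stated product formula. The main obstacle I anticipate is the bracket computation for $e^B$: getting the Courant bracket axioms to collapse into the single clean statement $i_Y i_X dB$ requires careful bookkeeping of Cartan calculus identities, and this is where the assumption that $H$ (and hence $H - dB$) is closed is used to guarantee that the resulting bracket still satisfies the Courant axioms.
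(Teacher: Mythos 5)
The paper itself gives no proof of \Cref{P2}: it is quoted verbatim from \cite{GRF}, Proposition 2.21. Your proposal reconstructs the standard argument for that result, and its architecture is sound: axiom (3) of \Cref{D4} forces $F$ to preserve $T^*M=\ker\pi$ and to be lower-triangular over $f$, axiom (1) pins down the diagonal as $(f_*,(f^*)^{-1})$ and identifies the off-diagonal block with a $2$-form $B$, the bracket axiom reduces to a relation among $H$, $f^*H$ and $dB$ via the two intertwining lemmas, and the group law follows from $e^B\circ\overline{f'}=\overline{f'}\circ e^{f'^*B}$ together with $e^{B_1}\circ e^{B_2}=e^{B_1+B_2}$. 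All of those steps do go through.

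There is, however, a concrete sign error in your key displayed lemma, and as written it would propagate to the wrong final relation. With the paper's conventions $[X+\xi,Y+\eta]_H=[X,Y]+L_X\eta-i_Yd\xi+i_Yi_XH$ and $e^B(X+\alpha)=X+\alpha+i_XB$, the Cartan identity you quote, $L_X(i_YB)-i_Yd(i_XB)-i_{[X,Y]}B=i_Yi_XdB$, contributes an \emph{extra} $+\,i_Yi_XdB$ when you expand $[e^B(X+\xi),e^B(Y+\eta)]_H$, so the correct intertwining is
\[
[e^Bu,e^Bv]_H=e^B[u,v]_{H+dB},\qquad\text{equivalently}\qquad [e^Bu,e^Bv]_{H-dB}=e^B[u,v]_H,
\]
not $[e^Bu,e^Bv]_H=e^B[u,v]_{H-dB}$ as you wrote (your prose description of which brackets are intertwined is consistent with the correct version; the displayed formula has the subscripts on the wrong sides). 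The sign matters: combining your formula as literally stated with $[\overline{f}u,\overline{f}v]_H=\overline{f}[u,v]_{f^*H}$ gives $[\overline{f}e^Bu,\overline{f}e^Bv]_H=\overline{f}e^B[u,v]_{f^*H-dB}$ and hence the condition $f^*H=H+dB$, the opposite of the proposition. With the corrected sign one gets $\overline{f}e^B[u,v]_{f^*H+dB}$, and equating the twist with $H$ yields $f^*H=H-dB$ exactly as claimed. Everything else --- the extraction of $(f^*)^{-1}$ and of the antisymmetric $B$ from the pairing axiom, the relation $e^B\circ\overline{f'}=\overline{f'}\circ e^{f'^*B}$, and the resulting product formula $\overline{f\circ f'}\circ e^{B'+f'^*B}$ --- is correct.
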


Moreover, it is proved that the equivalence class of exact Courant algebroids up to isomorphism are parametrized by $[H]\in H^3(M,\mathbb{R})$ which is known as the Ševera class of the Courant algebroid (see \cite{GRF} chapter 2.2). Next, we define the generalized metric on an exact Courant algebroid.

\begin{defn}\label{D5}
Given a smooth manifold $M$ and an exact Courant algebroid $E$ over $M$, a \emph{generalized metric} on $E$ is a bundle endomorphism $\mathcal{G}\in \Gamma(\End(E))$ satisfying 
\begin{align*}
         &\text{(1) \quad} \langle \mathcal{G}a,\mathcal{G}b \rangle=\langle a,b\rangle.
    \\&\text{(2) \quad} \langle \mathcal{G}a,b \rangle=\langle a,\mathcal{G}b\rangle.
     \\&\text{(3) \quad} \langle \mathcal{G}a,b \rangle \text{  is symmetric and positive definite for any $a,b\in E$.}
\end{align*}
\end{defn}

Following the notation in \cite{Rubio_2019}, let us denote 
\begin{itemize}[leftmargin=6cm]
\setlength\itemsep{-0.4em}
    \item[$\mathcal{GM}$:] the space of all generalized metrics.
    \item[$\mathcal{M}$:] the space of all Riemannian metrics.
    \item[$\Lambda$:] the space of all isotropic splittings.
\end{itemize}

Recall that \Cref{P1} tells us the space of generalized metrics on $E$ is isomorphic to the space of generalized metrics on $(TM\oplus T^*M)_H$ for some closed 3-form $H$. Thus, we have the following
\begin{proposition}[\cite{GRF} Proposition 2.38 and 2.40] \label{P3}
Let $E$ be an exact Courant algebroid. The space of all generalized metrics $\mathcal{GM}$ is isomorphic to $\mathcal{M}\times \Omega^2$.
\end{proposition}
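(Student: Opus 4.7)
The plan is to reduce to the model algebroid $(TM\oplus T^*M)_H$ via \Cref{P1}, then realize every generalized metric as an involution whose $+1$-eigenbundle is a graph over $TM$. First I would fix an isotropic splitting $\sigma$ of $E$ and use the isomorphism $F\colon TM\oplus T^*M \to E$ from \Cref{P1} to identify generalized metrics on $E$ with bundle endomorphisms of $TM\oplus T^*M$ satisfying the three conditions of \Cref{D5}. Since those conditions only involve the bilinear form (not the bracket or anchor), the closed $3$-form $H$ plays no role in this identification, and it suffices to parametrize generalized metrics on $TM\oplus T^*M$ with its standard pairing $\langle X+\xi,Y+\eta\rangle = \tfrac{1}{2}(\xi(Y)+\eta(X))$.

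Next I would derive the involution structure. Combining (1) and (2) of \Cref{D5} yields $\langle \mathcal{G}^2 a,b\rangle = \langle \mathcal{G}a,\mathcal{G}b\rangle = \langle a,b\rangle$ for all $a,b$, so nondegeneracy forces $\mathcal{G}^2 = \mathrm{Id}$ and one obtains an eigenbundle decomposition $E = V_+ \oplus V_-$. Condition (3) rewrites as $\langle\cdot,\cdot\rangle|_{V_+} > 0$ and $\langle\cdot,\cdot\rangle|_{V_-} < 0$, while self-adjointness gives $V_+ \perp V_-$; because $\langle\cdot,\cdot\rangle$ on $E$ has signature $(n,n)$, both eigenbundles must have rank $n$. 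Since $T^*M \subset E$ is isotropic, $V_+ \cap T^*M$ is trivial, so the anchor restricts to a bundle isomorphism $V_+ \to TM$. Hence $V_+$ is the graph of a unique bundle map $A\colon TM \to T^*M$, equivalently a section of $T^*M\otimes T^*M$.

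I would then split $A = g + b$ into its symmetric and antisymmetric parts. Evaluating the pairing on the graph, $\langle X + AX, Y + AY\rangle = \tfrac{1}{2}(A(X)(Y) + A(Y)(X)) = g(X,Y)$, so condition (3) is equivalent to $g \in \mathcal{M}$, while $b \in \Omega^2$ is unconstrained. This produces the map $\mathcal{GM} \to \mathcal{M}\times\Omega^2$, $\mathcal{G}\mapsto (g,b)$. For the inverse direction, given $(g,b)$ I would set $V_+$ to be the graph of $g+b$ and $V_-$ the graph of $-g+b$; a short calculation using the symmetry of $g$ and antisymmetry of $b$ confirms $V_+ \perp V_-$ and that the two graphs span $TM\oplus T^*M$, so the operator acting as $\pm 1$ on $V_\pm$ is a generalized metric inducing $(g,b)$.

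These two constructions are tautologically mutual inverses, giving the asserted bijection, which is moreover smooth and natural in $(g,b)$. There is no substantive obstacle: the only care needed is the bookkeeping around the factor $\tfrac{1}{2}$ in the pairing and verifying that $V_-$ is exactly the $\langle\cdot,\cdot\rangle$-orthogonal complement of $V_+$, both of which are routine.
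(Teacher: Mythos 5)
Your proof is correct and is essentially the standard argument that the paper imports from \cite{GRF} (Propositions 2.38 and 2.40): reduce to $(TM\oplus T^*M)_H$ via an isotropic splitting, show $\mathcal{G}$ is an involution whose $+1$-eigenbundle is a maximal positive-definite subbundle transverse to $T^*M$, hence the graph of $g+b$. This also matches the paper's own use of the result in \Cref{R1}, where the splitting $\sigma(X)=X+i_Xb$ is exactly the graph description you derive.
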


\begin{remark}\label{R1}
Fix a background 3-form $H_0$ such that $E\cong (TM\oplus T^*M)_{H_0}$, the proof of \Cref{P3} implies that the 3-form $H$ of any generalized metric $\mathcal{G}=\mathcal{G}(g,b)$ is induced by an isotropic splitting $\sigma(X)=X+i_X b$. Hence, using formula (\ref{6}) we have
\begin{align*}
    H(X,Y,Z)&=2\langle [\sigma X,\sigma Y]_{H_0},\sigma Z \rangle
    \\&=2\langle [X,Y]+L_{X}b(Y)-i_{Y}(db(X))+i_Yi_X H_0, Z+i_Z b \rangle
    \\&=H_0(X,Y,Z)+(L_X b(Y)-i_Y(db(X)))(Z)+b(Z,[X,Y])
    \\&=H_0(X,Y,Z)+db(X,Y,Z),
\end{align*}
i.e., $H=H_0+db$.
\end{remark}

\subsection{Generalized Slice theorem}
In this subsection, we want to review the generalized slice theorem which was done in \cite{Rubio_2019}. First, let us briefly review the ILH manifold.
\begin{defn}\label{D6}
A set of complete locally convex topological vector spaces $\{E,E^k:k\in \mathbb{N}_{\geq d}\}$ is called an \emph{ILH chain} if for each $k$
\\(1) $E^k$ is a Hilbert space.
\\(2) $E^{k+1}$ embeds continuously in $E^k$ with dense image.
\\(3) $E=\bigcap_{k\in\mathbb{N}_{\geq d}}E^k$ is endowed with the inverse limit topology.

\end{defn}

\begin{defn}\label{D7}
Let $M$ be a manifold modeled on a locally convex topological vector space $E$. $M$ is called an \emph{ILH manifold} modelled on the ILH chain $\{E,E^k:k\in \mathbb{N}_{\geq d}\}$ if 
\\(1) The manifold $M$ is the inverse limit of smooth Hilbert manifold $M^k$ modelled on $E^k$ such that $M^l\subset M^k$ for all $l> k$.
\\(2) For all $x\in M$, there exists open charts $(U_k,\phi_k)$ of $M^k$ containing $x$ such that $U_l\subset U_k$ and $\phi_k|_{U_l}=\phi_l$ for all $l>k$. 
\end{defn}
One can also define ILH groups and ILH actions. See \cite{Rubio_2019} for more detail. 

\begin{example}
Let $(E,h)$ be a smooth Riemannian vector bundle over a compact Riemannian manifold $(M,g)$. Suppose $(E,h)$ admits a metric compatible connection $\nabla$, and we define Hilbert norms $\|\cdot\|_k$ by
\begin{align*}
    \|u\|_k=\Bigl(\sum_{i=0}^k \int_M \langle \nabla^iu,\nabla^iu \rangle_h dV_g\Bigr)^{1/2}
\end{align*}
where $\langle\cdot,\cdot\rangle_h$ is the metric on $T^*M^{\otimes l}\otimes E$ induced by $g$ and $h$ and $\nabla^i$ is the $i$-th covariant derivative. By using the Sobolev embedding, then $\{\Gamma(S^2M)\times \Omega^2,\Gamma(S^2M)^k\times \Omega^{2,k}:k\geq n+5\}$ is an ILH chain where $\Gamma(S^2M)^k\times \Omega^{2,k}$ is a completion of $\Gamma(S^2M)\times \Omega^2$ with respect to the norm $\|\cdot\|_k$.
\end{example}

Let $E$ be an exact Courant algebroid and $\mathcal{G}$ is a generalized metric. By fixing an isotropic splitting $\sigma$, we see that 
\begin{align*}
    E\cong_{\sigma}(TM\oplus T^*M)_{H}, \quad \mathcal{GM}\cong_{\sigma} \mathcal{M}\times \Omega^2, \quad \Aut(E)\cong_{\sigma}  \GDiff_H.
\end{align*}
Thus, we may write $\mathcal{G}=\mathcal{G}(g,b)$ where $(g,b)\in \mathcal{M}\times \Omega^2$. Recall that 
\begin{align*}
    \GDiff_H=\{(\varphi,B)\in\Diff(M)\ltimes\Omega^2: \varphi^*H=H-dB\},
\end{align*}
and we observe that its Lie algebra is
\begin{align*}
      \mathfrak{gdiff}_H=\{(X,\omega)\in \Gamma(TM)\times\Omega^2 : d(i_XH+\omega)=0\}.
\end{align*}
In particular, we define the following Lie subalgebra 
\begin{align*}
       \mathfrak{gdiff}_H^e=\{(X,\omega)\in \Gamma(TM)\times\Omega^2 : i_XH+\omega=d\alpha \text{  for some $\alpha\in\Omega^1$}\}
\end{align*}
and denote its integration by $\GDiff^e_H$. By Hodge theory, $\mathfrak{gdiff}_H^e$ can be viewed as the space of $(X,\omega)$ such that $i_XH+\omega$ is orthogonal to the harmonic 2-form for the fixed metric $g$.

Next, we consider an action
\begin{align}
    \rho_{\mathcal{GM}}:\nonumber\quad &(\Diff(M)\ltimes \Omega^2)\times \mathcal{GM} \longrightarrow \mathcal{GM}
    \\&((\phi,B),(g,b))\longmapsto (\phi^*g,\phi^*b-B). \label{7}
\end{align}

Based on the above notation, the following is proved in \cite{Rubio_2019} Chapter 2,3,4.
\begin{lemma}\label{L2}
Let $E$ be an exact Courant algebroid and $\mathcal{GM}$ is the space of generalized metrics. The following statements are true.
\\(1) $\mathcal{GM}$ is an ILH manifold modelled on $\{\Gamma(S^2M)\times \Omega^2,\Gamma(S^2M)^k\times \Omega^{2,k}:k\geq n+5\}$.
\\(2) $\GDiff_H$ and $ \GDiff^e_H$ are strong ILH Lie subgroups of $\Diff(M)\ltimes\Omega^2$.
\\(3) If we restrict the action $\rho_{\mathcal{GM}}$ to $\GDiff_H$ \emph{($\GDiff^e_H$)}, it will induce an \emph{ILH $\GDiff_H$ ($\GDiff^e_H$)} action on $\mathcal{GM}$. 
\end{lemma}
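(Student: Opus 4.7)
The plan is to exploit the identification $\mathcal{GM}\cong\mathcal{M}\times\Omega^2$ from Proposition~\ref{P3} and the standard ILH structure induced by Sobolev completions. After fixing an isotropic splitting $\sigma$, the bijection $\mathcal{G}(g,b)\mapsto(g,b)$ serves as the global chart. Using a background metric on $M$ together with its Levi--Civita connection, one defines Hilbert completions $\Gamma(S^2M)^k$ and $\Omega^{2,k}$ via the norms $\|\cdot\|_k$ from the preceding example. Positivity is an open condition, so the cone $\mathcal{M}^k$ is open in $\Gamma(S^2M)^k$ for $k\geq n+5$ (chosen so that Sobolev embedding yields at least $C^2$ regularity). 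Density of $\Gamma(S^2M)^{k+1}\times\Omega^{2,k+1}\hookrightarrow \Gamma(S^2M)^k\times\Omega^{2,k}$ and inverse-limit compatibility of the charts are routine Sobolev calculus, yielding part (1).

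For (2), the strategy is to realize $\GDiff_H$ as the zero set of the smooth map $\Phi(\varphi,B)=\varphi^*H-H+dB$ from $\Diff(M)\ltimes\Omega^2$ to $\Omega^3$. Its linearization at the identity in a direction $(X,\omega)$ is $L_XH+d\omega=d(i_XH+\omega)$, using that $H$ is closed. The kernel is exactly $\mathfrak{gdiff}_H$, and at each Sobolev level the linearization is a tame first-order operator with split kernel (Hodge decomposition on $\Omega^3$). An implicit function theorem in the ILH category, of Omori/Ebin type, then produces a submanifold chart at the identity, which is transported to every other point by group translation. For $\GDiff^e_H$ one further imposes that $i_XH+\omega$ be orthogonal to the harmonic $2$-forms of the fixed metric $g$, a closed linear condition on $\mathfrak{gdiff}_H$. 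Since this orthogonality is preserved under the multiplication law of Proposition~\ref{P2}, the corresponding integrated subgroup $\GDiff^e_H$ is likewise a strong ILH Lie subgroup.

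For (3), the action $\rho_{\mathcal{GM}}((\varphi,B),(g,b))=(\varphi^*g,\varphi^*b-B)$ decomposes as pullback plus an affine shift. Pullback is smooth as a map $\Diff^{k+1}(M)\times\Gamma(S^2M)^k\to \Gamma(S^2M)^k$ (and similarly for $\Omega^{2,k}$) by Ebin's calculus on manifolds of mappings, and the shift $b\mapsto b-B$ is linear and bounded. Combining these verifies the ILH action axioms across the whole chain; the restriction to $\GDiff_H$ and $\GDiff^e_H$ is automatic once (2) is in hand.

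The principal obstacle lies in (2). The constraint $\varphi^*H-H+dB=0$ couples a nonlinear cohomological equation on $\varphi$ with a linear equation on $B$, and the standard Banach implicit function theorem loses derivatives on $\varphi$ when one writes down the required inverse. Overcoming this requires a Nash--Moser or Omori-type argument adapted to the ILH category, together with a careful verification that the tangential splitting furnished by the linearization is compatible across all Sobolev levels simultaneously. This regularity bookkeeping, rather than the algebraic structure of the constraint itself, is the technically substantial step, and is why one expects a proof in this framework to occupy several chapters.
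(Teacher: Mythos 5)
The paper does not prove this lemma at all: it is stated as a package of results imported verbatim from Rubio--Tipler \cite{Rubio_2019} (Chapters 2--4), so there is no in-paper argument to compare yours against. Your sketch is consistent with the strategy of the cited source --- global chart $\mathcal{G}(g,b)\mapsto(g,b)$ via \Cref{P3}, Sobolev completions with $k\geq n+5$, openness of the positivity condition, Ebin's calculus for smoothness of pullback --- and parts (1) and (3) as you outline them are essentially correct and routine.

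The substantive issue is part (2), and you have correctly located it but not resolved it. Being a \emph{strong ILH Lie subgroup} in the Omori sense is not merely the existence of a submanifold chart at the identity: it is a list of axioms requiring compatible charts at every Sobolev level, group multiplication and inversion extending to maps $G^{k+l}\times G^k\to G^k$ of controlled regularity, and right translations that are $C^\infty$ on each level. An ILH implicit function theorem applied to $\Phi(\varphi,B)=\varphi^*H-H+dB$ gives you a chart, but the loss of derivatives you mention means the linearization is only invertible with a loss, and one must check that the resulting splitting $\mathfrak{gdiff}_H\oplus(\text{complement})$ is uniform across the whole chain --- this is precisely where Rubio--Tipler do real work, and your proposal defers it rather than carrying it out. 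Two smaller points: (i) for the zero-set description to make the linearization surjective you must regard $\Phi$ as valued in the exact $3$-forms $d\Omega^2$ (near the identity component $\varphi^*H-H$ is exact), not in all of $\Omega^3$; (ii) for $\GDiff^e_H$, passing from the closed Lie subalgebra $\mathfrak{gdiff}^e_H$ to an integrated strong ILH subgroup requires a Frobenius-type theorem in this category --- the observation that orthogonality to harmonic forms is compatible with the multiplication law of \Cref{P2} is necessary but not sufficient. As a replacement for the citation your text would need these steps filled in; as a description of what the cited proof does, it is a fair outline.
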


\begin{defn}\label{D8}
The group of \emph{generalized isometries} (exact generalized isometries) of $\mathcal{G}\in\mathcal{GM}$ is the isotropy group of $\mathcal{G}$ under the $\GDiff_H$ ($\GDiff^e_H$)-action and is denoted by $\Isom_H(\mathcal{G})$ ($\Isom^e_H(\mathcal{G})$).  
\end{defn}
\begin{remark}\label{R2}
Fix a background 3-form $H_0$. Let $\mathcal{G}=\mathcal{G}(g,b)$. \Cref{R1} indicates that the generalized metric $\mathcal{G}(g,b)$ corresponds to the 3-form $H=H_0+db$. Then, $\GDiff$-action $\rho_{\mathcal{GM}}$ on $\mathcal{G}(g,b)$ is given by
\begin{align*}
    (f,B)\cdot(g,b)\longmapsto (f^*g,f^*b-B) \quad \text{  where $f^*H_0=H_0-dB$.}
\end{align*}
If we denote the 3-form corresponding to $(f^*g,f^*b-B)$ by $H'$, we can see that
\begin{align*}
    H'=H_0+d(f^*b-B)=H_0+f^*(db)-dB=f^*(H_0+db)=f^*H.
\end{align*}
\end{remark}

To construct the generalized slice, we will need the $\GDiff_H$-invariant weak Riemannian metric on $\mathcal{GM}$ which is given by

\begin{align}
    \Big((h_1,k_1),(h_2,k_2)\Big)=\int_M \langle h_1,h_2 \rangle_g+\langle k_1,k_2 \rangle_g dV_g \qquad \text{where  } (h_1,k_1),(h_2,k_2)\in \Gamma(S^2M)\times \Omega^2. \label{8}
\end{align}
Here we note that the Riemannian metric $g$ induces a standard inner product on $\Gamma(S^2M)$ and $\Omega^2$. In \cite{Rubio_2019}, Rubio and Tipler generalized the slice theorem to the generalized metrics under the $\GDiff$ or $\GDiff^e$ action with respect to the weak Riemannian metric on $\mathcal{GM}$ (\ref{8}). However, in our case, we need to consider the $f$-twisted inner product
\begin{align}
    \Big((h_1,k_1),(h_2,k_2)\Big)_{f}\coloneqq\int_M \big(\langle h_1,h_2 \rangle_g+\langle k_1,k_2 \rangle_g \big) e^{-f} dV_g.\label{9}
\end{align}
In the following, we prove the generalized slice theorem to the $f$-twisted inner product case, and our idea is motivated by \cite{MR3319966}. 

\begin{theorem}\label{C2}
Let $\mathcal{G}$ be a generalized metric on an exact Courant algebroid $E$ and $f$ be $\Isom(\mathcal{G})$ invariant, then there exists an ILH submanifold $S^f_\mathcal{G}$ of $\mathcal{GM}$ such that 
\\(1)  $\forall F\in \Isom_H(\mathcal{G}),\, F\cdot S^f_\mathcal{G}=S^f_\mathcal{G}$.
\\(2) $ \forall F\in \GDiff_H, \text{if $(F\cdot S^f_\mathcal{G})\cap S^f_\mathcal{G}=\emptyset $ then }F\in \Isom_H(\mathcal{G})$.
\\(3) There exists a local cross section $\chi$ of the map $F\longmapsto \rho_{\mathcal{GM}}(F,\mathcal{G})$ on a neighborhood $\mathcal{U}$ of $\mathcal{G}$ in $\GDiff_H\cdot\mathcal{G}$ such that the map from $\mathcal{U}\times S^f_{\mathcal{G}}\longrightarrow \mathcal{GM}$ given by $(V_1,V_2)\longmapsto \rho_{\mathcal{GM}}(\chi(V_1),V_2)$ is a homeomorphism onto its image.   
\end{theorem}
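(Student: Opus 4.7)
The plan is to follow the construction of Rubio--Tipler in \cite{Rubio_2019}, but to replace the ambient weak Riemannian metric (\ref{8}) by the $f$-twisted inner product (\ref{9}) throughout. Fix an isotropic splitting so that $\mathcal{GM}\cong \mathcal{M}\times\Omega^2$ and $\mathcal{G}=\mathcal{G}(g,b)$. Linearizing the action (\ref{7}) at the identity produces the infinitesimal action map
\begin{align*}
\alpha_\mathcal{G}\colon \mathfrak{gdiff}_H \longrightarrow T_\mathcal{G}\mathcal{GM}=\Gamma(S^2M)\times \Omega^2,\qquad (X,\omega)\longmapsto (L_X g,\; L_X b-\omega),
\end{align*}
where $\mathfrak{gdiff}_H=\{(X,\omega)\in \Gamma(TM)\times\Omega^2 : d(i_XH+\omega)=0\}$. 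I would then compute the formal adjoint $\alpha_\mathcal{G}^{*_f}$ with respect to $(\cdot,\cdot)_f$ by integration by parts against $e^{-f}dV_g$; the weight turns the usual divergence into the $f$-twisted divergence $\divg_f$ from (\ref{18}), but does not affect the principal symbol.

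The analytic heart of the argument is the $L^2_f$-orthogonal Hodge-type decomposition
\begin{align*}
T_\mathcal{G}\mathcal{GM}=\operatorname{Im}\alpha_\mathcal{G}\;\oplus_f\; \ker \alpha_\mathcal{G}^{*_f}.
\end{align*}
This follows once one shows that $\alpha_\mathcal{G}^{*_f}\alpha_\mathcal{G}$ is an elliptic operator on $\mathfrak{gdiff}_H$, self-adjoint with respect to $(\cdot,\cdot)_f$, with the same principal symbol as the unweighted operator treated in \cite{Rubio_2019}. On the Sobolev completions $\Gamma(S^2M)^k\times \Omega^{2,k}$ with $k\geq n+5$, compactness of $M$ makes the zeroth-order perturbation involving $\nabla f$ bounded, so standard Fredholm theory gives closed range and finite-dimensional kernel at each Sobolev level; passing to the inverse limit yields the decomposition in the ILH category.

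Given this splitting, define $S^f_\mathcal{G}$ as the image of a small neighborhood of $0\in \ker \alpha_\mathcal{G}^{*_f}$ under an ILH local chart on $\mathcal{GM}$ centered at $\mathcal{G}$. For property (1), any $F\in \Isom_H(\mathcal{G})$ acts by a diffeomorphism of $M$ preserving $g$ and $b$, and by hypothesis also preserves $f$; hence $F$ preserves the weight $e^{-f}dV_g$ and therefore the subspace $\ker \alpha_\mathcal{G}^{*_f}$, so it preserves $S^f_\mathcal{G}$. Property (2) is a local injectivity statement: the complementary summand $\operatorname{Im}\alpha_\mathcal{G}$ is tangent to the orbit $\GDiff_H\cdot \mathcal{G}$, while the slice direction is transverse to it, so any $F\in \GDiff_H$ moving $S^f_\mathcal{G}$ back into itself near $\mathcal{G}$ must fix $\mathcal{G}$. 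Property (3) is then an application of the ILH inverse function theorem to the map $(F,\mathcal{G}')\mapsto \rho_{\mathcal{GM}}(F,\mathcal{G}')$ defined on a complement of $\operatorname{Lie}\Isom_H(\mathcal{G})$ crossed with $S^f_\mathcal{G}$, whose differential at $(e,\mathcal{G})$ is an isomorphism precisely because of the $f$-weighted decomposition above.

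The main obstacle will be carefully redoing the Hilbert-space Fredholm step of \cite{Rubio_2019} with the weighted inner product: the identity $\divg_f=\divg-\iota_{\nabla f}$ introduces new lower-order terms when one computes $\alpha_\mathcal{G}^{*_f}$ and its composition with $\alpha_\mathcal{G}$, and one must verify that self-adjointness, ellipticity, and closed-range still hold on each $\Gamma(S^2M)^k\times \Omega^{2,k}$. Once these analytic points are in place, the strong ILH Lie group structure of $\GDiff_H$ from \Cref{L2} and the arguments of \cite{Rubio_2019,MR3319966} transfer essentially verbatim, so the weighting only changes the slice subspace, not the structure of the proof.
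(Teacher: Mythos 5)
Your high-level plan coincides with the paper's: redo the Rubio--Tipler slice construction with the $f$-twisted inner product (\ref{9}) in place of (\ref{8}). But the ``analytic heart'' as you state it has genuine gaps. First, $\mathfrak{gdiff}_H=\{(X,\omega): d(i_XH+\omega)=0\}$ is not the space of sections of a vector bundle --- it is cut out by a differential constraint --- so ``$\alpha_\mathcal{G}^{*_f}\alpha_\mathcal{G}$ is an elliptic operator on $\mathfrak{gdiff}_H$'' is not a meaningful statement, and even on all of $\Gamma(TM)\times\Omega^2$ the map $(X,\omega)\mapsto(L_Xg,L_Xb-\omega)$ is first order in $X$ but zeroth order in $\omega$, so the composition with its adjoint is not elliptic in the usual sense. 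The paper avoids this by introducing \emph{two} operators: $\textbf{A}$ on the honest bundle $\Gamma(TM\oplus T^*M)$, $X+\alpha\mapsto(L_Xg,L_Xb+i_XH+d\alpha)$, which parametrizes the tangent space of the $\GDiff^e_H$-orbit and fits into the complex $\Omega^0\xrightarrow{\textbf{B}}\Gamma(TM\oplus T^*M)\xrightarrow{\textbf{A}}\Gamma(S^2M)\times\Omega^2$ whose ``Laplacian'' $\textbf{A}_f^*\textbf{A}+\textbf{B}\textbf{B}^*_f$ is elliptic and admits a Green operator $\mathbb{G}$; and $\textbf{A}'$ on $\mathfrak{gdiff}_H$, whose image differs from $\im\textbf{A}$ only by the finite-dimensional space of harmonic $2$-forms, handled by the explicit projection $\textbf{I}_0$ onto $\mathbb{R}^s$. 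Your proposal never accounts for this finite-dimensional discrepancy between the exact and full automorphism groups.

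Second, because $(\cdot,\cdot)_f$ is only a \emph{weak} Riemannian metric, pointwise Fredholmness on Sobolev completions does not by itself give a \emph{smooth} normal bundle along the orbit, which is what property (3) actually requires; the paper follows Ebin and realizes the normal bundle as $\ker P$ for the smooth map $P$ obtained by transporting $\textbf{A}\circ\mathbb{G}\circ\textbf{A}^*_f$ equivariantly along the orbit, after checking (via the local expression of $\textbf{A}^*_f(h,k)=(-2\divg_fh+F_1(d^*k)+F_2(k),\tfrac12 d^*_fk)$) that this is smooth. Finally, defining $S^f_\mathcal{G}$ as the image of $\ker\alpha_\mathcal{G}^{*_f}$ under ``an ILH local chart'' does not yield property (1): invariance of the linear subspace $\ker\textbf{A}_f^*$ under $\Isom_H(\mathcal{G})$ (which does use the hypothesis that $f$ is $\Isom(\mathcal{G})$-invariant, as you note) must be combined with an \emph{equivariant} chart; the paper uses the invariant exponential map of the unweighted metric (\ref{8}) applied to the $f$-twisted normal fiber. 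These are exactly the points where the weighting requires new work, and they are the content of the paper's proof rather than details that ``transfer essentially verbatim.''
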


\begin{proof}
Fix $\mathcal{G}\in\mathcal{GM}$, we define the orbit $\mathcal{O}_{\mathcal{G}}:=\GDiff_H\cdot \mathcal{G}$ and $\mathcal{O}^e_{\mathcal{G}}:=\GDiff_H^e\cdot \mathcal{G}$ with respect to the $\rho_{\mathcal{GM}}$ action. By using Proposition 4.8, 4.9 and the proof of Theorem 4.6, 4.7 in \cite{Rubio_2019}, we see that
$\mathcal{O}_{\mathcal{G}}$ and $\mathcal{O}^e_{\mathcal{G}}$ are closed ILH submanifolds of $\mathcal{GM}$. Define operators
\begin{align*}
    \textbf{A}: \nonumber \quad &\Gamma(TM\oplus T^*M)\longrightarrow \Gamma(S^2M)\times\Omega^2
    \\&X+\alpha \longmapsto (L_Xg,L_Xb+i_XH+d\alpha) 
\end{align*}
and
\begin{align*}
    \textbf{A}': \nonumber \quad &\mathfrak{gdiff}_H\longrightarrow \Gamma(S^2M)\times\Omega^2
    \\& (X,\omega) \longmapsto (L_Xg,L_Xb-\omega).
\end{align*}
We claim that $T_\mathcal{G}\mathcal{O}^e_{\mathcal{G}}=\im\textbf{A}$ and $T_\mathcal{G}\mathcal{O}_{\mathcal{G}}=\im\textbf{A}'$. Suppose $(f_t,B_t)\in\GDiff^e_H$ and $\{f_t\}$ are diffeomorphisms which are generated by the vector field $X$, i.e., $\frac{d}{dt}\Big|_{t=0} f_t=X$. We write
\begin{align*}  
    \mathcal{G}_t=\rho_{\mathcal{GM}}((f_t,B_t),(g,b))=(f_t^*g,f_t^*b-B_t)
\end{align*}
then
\begin{align*}
    \frac{d}{dt}\Big|_{t=0} \mathcal{G}_t=(L_Xg,L_Xb-\frac{d}{dt}\Big|_{t=0}B_t).
\end{align*}
Since $(f_t,B_t)\in\GDiff^e_H$, 
\begin{align*}
    i_XH+\frac{d}{dt}\Big|_{t=0}B_t=d\alpha  \text{  for some $\alpha$}.
\end{align*}
Similar to the other case and the claim is verified. 

Next, let us construct normal bundles $\nu^e=\nu^e(\mathcal{O}^e_{\mathcal{G}})$ and $\nu=\nu(\mathcal{O}_{\mathcal{G}})$ in detail. We denote the formal adjoints of $\textbf{A}$ and $\textbf{A}'$ by $\textbf{A}_f^*$ and $(\textbf{A}'_f)^*$ with respect to the $f$-twisted inner product $(,)_f$ respectively. Define 
\begin{align*}
    \nu^e_{\mathcal{G}'}=\eta^*(\ker \textbf{A}_f^*) \text{     where $\mathcal{G}'=\eta^*\mathcal{G}$ and $\eta\in\GDiff_H^e$.}
\end{align*}
Note that $f$ is $\Isom^e_\mathcal{G}$-invariant, so it is well-defined and then $\nu^e=\bigcup_{\mathcal{G}'\in \mathcal{O}_{\mathcal{G}}} \nu^e_{\mathcal{G}'}$. Following the same idea, we can define 
\begin{align*}
    \nu_{\mathcal{G}'}=\eta^*(\ker (\textbf{A}'_f)^*) \text{    where $\mathcal{G}'=\eta^*\mathcal{G}$ and $\eta\in\GDiff_H$.}
\end{align*}
and $\nu=\bigcup_{\mathcal{G}'\in \mathcal{O}_{\mathcal{G}}} \nu_{\mathcal{G}'}$ is well-defined.

Since $(,)_f$ is a weak Riemannian metric, we can not directly prove that $\nu^e$ is a smooth subbundle. Following the idea of Ebin \cite{MR0267604}, we need to construct a smooth map $P$ such that $\nu^e=\ker P$. To do so, we define 
\begin{align*}
      \textbf{B}: \nonumber \quad &\Omega^0\longrightarrow \Gamma(TM\oplus T^*M)
    \\&\beta \longmapsto (0,d\beta)
\end{align*}
and consider a sequence
\[  \begin{tikzcd}
\Omega^0\arrow[r, "\textbf{B}"] & \Gamma(TM\oplus T^*M) \arrow[r, "\textbf{A}"] & \Gamma(S^2M)\times\Omega^2.
\end{tikzcd} 
\]
with its corresponding Green operator $\mathbb{G}$ associated to $\textbf{A}_f^*\textbf{A}+\textbf{B}\textbf{B}^*_f$. Then, we define 
\begin{align*}
    P: T\mathcal{GM}|_{\mathcal{O}_\mathcal{G}}\longrightarrow T\mathcal{O}_\mathcal{G} \text{ by transporting along the orbit of the map $\textbf{A}\circ \mathbb{G}\circ \textbf{A}^*_f$.}
\end{align*}
By direct computation, 
\begin{align*}
    \Big(\textbf{A}(X,\alpha),(h,k)\Big)_f&=\int_M \Big( \langle L_Xg,h \rangle_g+\langle L_Xb+i_XH+d\alpha,k \rangle_g \Big) e^{-f}dV_g
    \\&=\int_M \Big( \langle X,-2\divg_fh+F_1(d^*k)+F_2(k) \rangle_g+2\langle \alpha,d^*_fk \rangle_g \Big) e^{-f}dV_g
\end{align*}
where $F_1$ and $F_2$ are rational functions of $(g,H,f,b,db)$ and $\divg_f$, $d^*_f$ are given in (\ref{18}). Thus, the formal adjoint $\textbf{A}^*_f$ is
\begin{align*}
    \textbf{A}^*_f(h,k)=(-2\divg_fh+F_1(d^*k)+F_2(k),\frac{1}{2}d^*_fk).
\end{align*}
Similar to the proof of Theorem 7.1 in \cite{MR0267604}, the operator $\textbf{A}^*_f$ is smooth by computing its local expression. Besides, smoothness of $\textbf{A}^*_f$ implies that $P$ is smooth and then $\nu^e=\ker P$ is smooth by the proof of Theorem 4.6 step 2 in \cite{Rubio_2019}. 

From the construction above, we can transport the operator $\textbf{A}\circ \mathbb{G}\circ \textbf{A}^*_f$ equivariantly along the $\GDiff_H$-orbit to derive a smooth ILH bundle $\nu'$ along the orbit. Denote $\mathcal{H}^2$ as the space of harmonic 2-forms, we see that
\begin{align*}
    \im \textbf{A}'=\im \textbf{A}\oplus\widetilde{\mathcal{H}}^2 \qquad \text{ where $\widetilde{\mathcal{H}}^2=\{(0,N): \text{$N\in \mathcal{H}^2$ }\} $}.  
\end{align*}
Since
\begin{align*}
    T_{\mathcal{G}}\mathcal{GM}= \im \textbf{A}\oplus\ker \textbf{A}_f^*=\im \textbf{A}'\oplus \ker (\textbf{A}'_f)^*,
\end{align*}
we have 
\begin{align*}
    \ker \textbf{A}_f^*= \ker (\textbf{A}'_f)^*\oplus \widetilde{\mathcal{H}}.
\end{align*}
That is, $\ker (\textbf{A}'_f)^*$ is the subspace of $ \ker \textbf{A}_f^*$ which is orthogonal to harmonic 2-forms. Let $\{e_i\}_{i=1}^s$ ($s=\dim \mathcal{H}^2$) be a basis of harmonic 2-forms. We can consider the projection
\begin{align*}
     \textbf{I}_0: \nonumber \quad & T_{\mathcal{G}}\mathcal{GM}\longrightarrow \mathcal{R}^s
    \\&(h,k) \longmapsto \Big((h,k),(0,e_i)\Big)_f.
\end{align*}
Extend equivariantly the operator $\textbf{I}_0$ along the orbit $\mathcal{O}_{\mathcal{G}}$, we have a smooth ILH bundle homeomorphism $\widetilde{\textbf{I}}_0$ from $\nu'$ to $\mathbb{R}^s$ over $\mathcal{O}_{\mathcal{G}}$. Then, $\nu=\ker \widetilde{\textbf{I}}_0$ is smooth.

In the remaining part of the proof, we need to use the invariant exponential map $\text{exp}_{\mathcal{GM}}$ with respect to the weak metric (\ref{8}) and a small neighborhood $\mathcal{V}$ of zero in the fiber $\nu_{\mathcal{G}}$ to construct a slice $S^f_{\mathcal{G}}$. One can follow the same argument in the third step of the proof of Theorem 4.6 in \cite{Rubio_2019} to complete the proof.
\end{proof}

\begin{remark}\label{R3}
In this paper, we may assume that our generalized metric $\mathcal{G}$ is $\mathcal{G}(g,0)$ ($b=0$). Then, 
\begin{align}
    \nonumber\ker \textbf{A}_f^*&=\Big\{(h,K)\in \Gamma(S^2M)\times\Omega^2:(\divg_f h)_l=\frac{1}{2}K^{ij}H_{ijl},\, d^*_fK=0\Big\};
    \\ \ker (\textbf{A}'_f)^*&=\Big\{(h,K)\in \Gamma(S^2M)\times\Omega^2:(\divg_f h)_l=\frac{1}{2}K^{ij}H_{ijl},\, d^*_fK=0,\, K\perp \mathcal{H}^2 \Big\}. \label{10}
\end{align}
If $f\equiv 0$, using Hodge theory we can write 
\begin{align*}
    K=K_0+d\alpha+d^*\beta \quad\text{  where $K_0\in\mathcal{H}^2$, $\alpha\in\Omega^1$ and $\beta\in\Omega^3$}
\end{align*}
so 
\begin{align}  
    \ker (\textbf{A}'_0)^*=\Big\{(h,K)\in \Gamma(S^2M)\times\Omega^2:(\divg h)_l=\frac{1}{2}K^{ij}H_{ijl},\, K=d^*\beta \Big\}.\label{11}
\end{align}
\end{remark}

\subsection{Generalized Ricci Flow and Generalized Ricci soliton}

In this section, we aim to discuss the generalized Ricci flow and generalized Ricci solitons. Most of the contents can be found in \cite{GRF} Section 4.
 
\begin{defn}\label{D9}
Let $(M,g,H)$ be a Riemannian manifold and $H\in\Omega^3$. The \emph{Bismut connections} $\nabla^{\pm}$ associated to $(g,H)$ are defined as 
\begin{align}
    \langle \nabla_X^{\pm}Y,Z \rangle=\langle \nabla_XY,Z \rangle\pm \frac{1}{2}H(X,Y,Z) \quad \text{ for all tangent vectors $X,Y,Z$.}\label{12}
\end{align}
Here, $\nabla$ is the Levi-Civita connection associated to $g$, i.e., $ \nabla^{\pm}$ are the unique compatible connections with torsion $\pm H$.
\end{defn}

Following the definitions, we are able to compute the curvature tensor concerning the Bismut connections.
\begin{proposition}[\cite{GRF} Proposition 3.18]\label{P4}
Let $(M^n,g,H)$ be a Riemannian manifold with $H\in \Omega^3$ and $dH=0$, then for any vector fields $X,Y,Z,W$ we have
\begin{align*}
    \nonumber Rm^+(X,Y,Z,W)=&Rm(X,Y,Z,W)+\frac{1}{2}\nabla_XH(Y,Z,W)-\frac{1}{2}\nabla_YH(X,Z,W)
    \\&-\frac{1}{4}\langle H(X,W),H(Y,Z)\rangle+\frac{1}{4}\langle H(Y,W),H(X,Z)\rangle, 
\end{align*}
\begin{align*}
    \Rc^+=\Rc-\frac{1}{4}H^2-\frac{1}{2}d^*H, \quad  \Rc^-=\Rc-\frac{1}{4}H^2+\frac{1}{2}d^*H, 
\end{align*}
\begin{align*}
      R^+=R-\frac{1}{4}|H|^2, \quad    R^-=R-\frac{1}{4}|H|^2 
\end{align*}
where $H^2(X,Y)=\langle i_XH,i_YH\rangle$.
\end{proposition}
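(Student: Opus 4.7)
The plan is to compute $Rm^+$ directly from the defining relation $\nabla^+ = \nabla + A$, where $A$ is the $(1,2)$-tensor specified by $\langle A(X,Y),Z\rangle = \tfrac{1}{2}H(X,Y,Z)$. The general identity for curvatures of two connections differing by a tensor yields
\begin{align*}
Rm^+(X,Y)Z = Rm(X,Y)Z + (\nabla_X A)(Y,Z) - (\nabla_Y A)(X,Z) + A(X,A(Y,Z)) - A(Y,A(X,Z)),
\end{align*}
so the proof reduces to evaluating the four new contributions when paired with $W$. The first two are immediate: since $\nabla$ is metric, $\langle(\nabla_X A)(Y,Z),W\rangle = \tfrac12(\nabla_X H)(Y,Z,W)$, which accounts for the $\pm\tfrac12 \nabla H$ terms. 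For the quadratic terms, I would set $V = A(Y,Z)$ with $V^k = \tfrac12 g^{kl}H_{YZl}$ and expand $\langle A(X,V),W\rangle = \tfrac12 H(X,V,W)$ in an orthonormal frame; the antisymmetry of $H$ in the last two indices produces the sign that converts this to $-\tfrac14\langle H(X,W),H(Y,Z)\rangle$, with the paired term giving $+\tfrac14\langle H(Y,W),H(X,Z)\rangle$. Summing these four contributions gives the stated formula for $Rm^+$.

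For the Ricci tensor I would trace over the first and fourth arguments. The contribution $\sum_i\tfrac12(\nabla_{e_i}H)(Y,Z,e_i)$ is, up to a sign that is fixed by the paper's convention (\ref{18}), exactly $-\tfrac12 d^*H(Y,Z)$; the mirror term $\sum_i\tfrac12(\nabla_Y H)(e_i,Z,e_i)$ vanishes by antisymmetry of $H$ in its first and third slots. In the quadratic part, the trace of $\langle H(X,W),H(Y,Z)\rangle$ at $X=W=e_i$ vanishes since $H(e_i,e_i)=0$, while the trace of $\langle H(Y,W),H(X,Z)\rangle$ produces $\sum_i\langle H(Y,e_i),H(e_i,Z)\rangle$, which after one antisymmetry equals $-H^2(Y,Z)$ in the paper's normalization of $H^2(X,Y)=\langle i_XH,i_YH\rangle$. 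Combining, $\Rc^+ = \Rc - \tfrac14 H^2 - \tfrac12 d^*H$, and the $\nabla^-$ calculation is identical after sending $A\mapsto -A$, which only flips the sign of the $d^*H$ contribution. For $R^{\pm}$ I would trace once more: the $d^*H$ term disappears because $d^*H$ is a $2$-form and the metric trace of a skew tensor is zero, leaving $R^{\pm} = R - \tfrac14|H|^2$, the sign of the quadratic term being independent of $\pm$.

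The routine part is the expansion of $Rm^+$; the only real bookkeeping obstacle is keeping straight the several places where antisymmetry of $H$ is invoked, both to eliminate terms from the trace and to flip a sign in the $H^2$ contribution. Doing the entire computation in a local orthonormal frame with index notation $H_{ijk}$ and invoking $H_{ijk}=-H_{ikj}=-H_{jik}$ at each step avoids sign errors and makes the identifications $\sum_i(\nabla^i H)_{\cdot\cdot i} = -d^*H$ and $\sum_i H_{Yij}H_{iZ}{}^j = -H^2(Y,Z)$ transparent, so that the factors $\tfrac14,\tfrac12,\tfrac12$ in the statement fall out directly.
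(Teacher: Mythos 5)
The paper gives no proof of its own here (it cites \cite{GRF}, Proposition 3.18), and your computation is exactly the standard argument used there: write $\nabla^{\pm}=\nabla\pm A$ with $\langle A(X,Y),Z\rangle=\tfrac12H(X,Y,Z)$, apply the curvature-difference formula, and trace. All the signs check out against the paper's conventions ($(d^*H)_{jk}=-\nabla_iH_{ijk}$, $H^2_{ij}=H_{iab}H_{jab}$, $|H|^2=H_{ijk}H_{ijk}$), so the proposal is correct and takes essentially the same route as the cited source.
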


\begin{defn}\label{D10}
Let $E$ be an exact Courant algebroid over a smooth manifold $M$ and $H_0$ is a background closed 3-form. A one-parameter family of generalized metrics $\mathcal{G}_t=\mathcal{G}_t(g_t,b_t)$
is called a \emph{generalized Ricci flow} if 
\begin{align}
   &\frac{\partial}{\partial t}g=-2\Rc+\frac{1}{2}H^2,\nonumber
   \\&\frac{\partial}{\partial t}b=-d^*H \quad \text{ where $H=H_0+db$.} \label{13}
\end{align}
\end{defn}

In section 5, we will consider the gauge-fixed generalized Ricci flow which can be defined as follows.
\begin{defn}\label{D11}
Let $E$ be an exact Courant algebroid over a smooth manifold $M$. A one-parameter family of generalized metrics $\{\mathcal{G}_t\}$
is called a \emph{gauge-fixed generalized Ricci flow} if there exists $F_t\in \Aut(E)$ such that $\widetilde{\mathcal{G}_t}=(F_t)_*\mathcal{G}_t$ is a solution of a generalized Ricci flow.

\end{defn}

Recall that by fixing an isotropic splitting $\sigma_0$, we have
\begin{align*}
    \Aut(E)\cong_{\sigma_0}\text{GDiff$_{H_0}$=}\{(\varphi,B)\in\Diff(M)\times\Omega^2: \varphi^*H_0=H_0-dB\}
\end{align*}
and 
\begin{align*}
      \mathfrak{gdiff}_{H_0}=\{(X,B)\in \Gamma(TM)\times\Omega^2 : d(i_XH_0+B)=0\}
\end{align*}
where $H_0$ is the 3-form corresponding to $\sigma_0$ by (\ref{6}). Thus, we can also say that $\mathcal{G}_t(g_t,b_t)$ is a gauge-fixed generalized Ricci flow if there exists $(X_t,B_t)\in  \mathfrak{gdiff}_{H_0}$ such that
\begin{align*}
  &\frac{\partial}{\partial t}g=-2\Rc+\frac{1}{2}H^2+L_{X_t}g,
  \\&\frac{\partial}{\partial t}b=-d^*H-B_t+L_{X_t}b \quad \text{where $H=H_0+db$.}
\end{align*}
In particular, let us define a one-parameter family of closed 2-form $\{k_t\}$ by
\begin{align*}
    k_t=-B_t-i_{X_t}H_0+d(i_{X_t}b).
\end{align*}
Then, we also say that $\mathcal{G}_t=\mathcal{G}_t(g_t,b_t)$ is a solution of a gauge-fixed generalized Ricci flow, if there exists $(X_t,k_t)$ such that 
\begin{align}
    &\nonumber\frac{\partial}{\partial t}g=-2\Rc+\frac{1}{2}H^2+L_{X_t}g,
     \\&\frac{\partial}{\partial t}b=-d^*H+i_{X_t}H+k_t  \quad \text{where $H=H_0+db$.} \label{14}
\end{align}

Motivated by the Ricci flow, we define the stationary points to be the steady generalized Ricci solitons.
\begin{defn}\label{D12}
Given a Riemannian metric $g$ and a closed 3-form $H$ on a smooth manifold $M$. We say $(M,g,H)$ is a \emph{steady generalized Ricci soliton} if there exists some vector field $X$ and a closed 2-form $k$ such that
\begin{align*}
    &\nonumber 0=\Rc-\frac{1}{4}H^2+\frac{1}{2}L_Xg, 
    \\&0=\frac{1}{2}d_g^*H+\frac{1}{2}i_{X}H+k. 
\end{align*}
In particular, we say $(M,g,H,f)$ is a \emph{steady gradient generalized Ricci soliton} if $X=\nabla f$ for some smooth function $f$. 
\end{defn}

In this work, we only consider the special case that $(X,k)=(\nabla f,0)$. Thus, in the following we say that $(M,g,H,f)$ is called a \emph{steady gradient generalized Ricci soliton} if
\begin{align}
    0=\Rc-\frac{1}{4}H^2+\nabla^2 f, \quad 0=d_g^*H+i_{\nabla f}H \label{15}
\end{align}
and $(M,g,H)$ is called a \emph{generalized Einstein manifold} if
\begin{align}
  0=\Rc-\frac{1}{4}H^2, \quad 0=d_g^*H.\label{16}
\end{align}

At the end of this subsection, let us write down some properties of generalized Einstein manifolds.
\begin{lemma}\label{L3}
Let $(M,g,H)$ be a generalized Einstein manifold, then $R=\frac{1}{4}|H|^2=\text{constant}$
\end{lemma}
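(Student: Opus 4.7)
The first equality $R = \tfrac{1}{4}|H|^2$ is a one-line consequence of tracing the generalized Einstein equation $\Rc = \tfrac{1}{4}H^2$, together with the observation that $\tr H^2 = |H|^2$ (a normalization consistent with the formula $R^+ = R - \tfrac{1}{4}|H|^2$ stated in \Cref{P4}). So the real content of the lemma is to prove that $R$ is constant.

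My plan is to combine the twice-contracted second Bianchi identity with the two differential constraints $dH = 0$ and $d^{*} H = 0$. Bianchi gives $\nabla_i R = 2\nabla^j \Rc_{ij}$; inserting $\Rc = \tfrac{1}{4}H^2$ reduces the task to computing $\nabla^j H^2_{ij}$, where $H^2_{ij} = H_{iab}H_j{}^{ab}$. Expanding the derivative of the product and noting that $\nabla^j H_{jab} = -(d^{*} H)_{ab} = 0$ kills one of the two Leibniz terms, leaving $\nabla^j H^2_{ij} = H^{jab}\nabla_j H_{iab}$.

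The key step is then to rewrite $T_i := H^{jab}\nabla_j H_{iab}$ using $dH = 0$, which supplies the identity $\nabla_j H_{iab} = \nabla_i H_{jab} - \nabla_a H_{jib} + \nabla_b H_{jia}$. Contracting with $H^{jab}$, the first piece yields $\tfrac{1}{2}\nabla_i |H|^2$. The two remaining pieces, after relabeling dummy indices and using the total antisymmetry of $H$, each reduce to $-T_i$ itself. This produces the self-referential identity $T_i = \tfrac{1}{2}\nabla_i |H|^2 - 2T_i$, so $T_i = \tfrac{1}{6}\nabla_i |H|^2$, and therefore $\nabla_i R = \tfrac{1}{2}\nabla^j H^2_{ij} = \tfrac{1}{12}\nabla_i |H|^2$.

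Finally, I would differentiate the already-proved identity $R = \tfrac{1}{4}|H|^2$ to get $\nabla_i R = \tfrac{1}{4}\nabla_i |H|^2$. Comparing the two expressions for $\nabla_i R$ forces $\nabla_i |H|^2 = 0$, so $|H|^2$ (and hence $R$) is constant. The only step requiring care is the index bookkeeping that identifies the two cross terms with $-T_i$; this is routine but requires careful use of the antisymmetry of $H$.
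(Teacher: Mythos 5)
Your proposal is correct and follows essentially the same route as the paper: the contracted second Bianchi identity, $d^{*}H=0$ to kill one Leibniz term, the $dH=0$ identity contracted back against $H$ to produce the self-referential relation $T_i=\tfrac{1}{2}\nabla_i|H|^2-2T_i$, and finally comparison with the traced soliton equation $R=\tfrac14|H|^2$ to force constancy. The only difference is presentational (you solve for $T_i$ and then compare the two expressions for $\nabla_i R$, whereas the paper keeps everything as an equation for $\nabla_i R$ and concludes $3R-\tfrac14|H|^2$ is constant), and your index bookkeeping checks out.
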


\begin{proof}
First equality comes from the definition directly. To prove that $R$ and $|H|^2$ are both constant, we fix a normal coordinate at $p\in M$. Using the Bianchi identity, we compute
\begin{align*}
    \nabla_i R=2\nabla_j R_{ij}=\frac{1}{2}\nabla_j H^2_{ij}=\frac{1}{2}(\nabla_j H_{ikl})H_{jkl}  \quad \text{since $d^*H=0$.} 
\end{align*}
Because $H$ is closed,
\begin{align*}
    0=(dH)_{jikl}=\nabla_j H_{ikl}-\nabla_i H_{jkl}+\nabla_k H_{jil}-\nabla_l H_{jik}.
\end{align*}

The above equation reduces to 
\begin{align*}
    \nabla_i R&=\frac{1}{2}(\nabla_i H_{jkl}-\nabla_k H_{jil}+\nabla_l H_{jik})H_{jkl}
    \\&=\frac{1}{4}\nabla_i |H|^2-\nabla_k H_{jil}H_{jkl}
    \\&=\frac{1}{4}\nabla_i |H|^2-2\nabla_i R
\end{align*}
so
\begin{align*}
    \nabla_i(3R-\frac{1}{4}|H|^2)=0 \text{   for all $i$}\Longrightarrow 3R-\frac{1}{4}|H|^2=\text{constant.}
\end{align*}
Since $R-\frac{1}{4}|H|^2=0$, we complete our proof.

\end{proof}
In dimension 3, it deduces the following corollary.
\begin{corollary}\label{C3}
Any 3-dimensional generalized Einstein manifold $(M,g,H)$ has constant, nonnegative sectional curvatures. Moreover, it is Bismut flat and Einstein. If $(M,g,H)$ is compact, then it is a quotient of $S^3$.
\end{corollary}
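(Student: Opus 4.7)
The strategy rests on one observation: in dimension $3$ the bundle $\Lambda^3 T^*M$ is a line bundle spanned by $dV_g$, so $H=\phi\,dV_g$ for some $\phi\in C^\infty(M)$, and all three assertions reduce to algebra in $\phi$. First, since $\ast H=\phi$ in dimension $3$, the condition $d^*H=\pm\ast d\ast H=0$ from \eqref{16} forces $d\phi=0$, so $\phi$ is a constant.

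Next, in a local orthonormal frame I write $H_{ijk}=\phi\,\varepsilon_{ijk}$; the classical identity $\varepsilon_{ikl}\varepsilon_{jkl}=2\delta_{ij}$ gives
\[
H^2_{ij}=H_{ikl}H_{jkl}=2\phi^2 g_{ij}.
\]
The Einstein equation $\Rc=\tfrac{1}{4}H^2$ therefore says $\Rc=\tfrac{\phi^2}{2}g$, i.e.\ $(M,g)$ is Einstein; since $n=3$, this is equivalent to constant sectional curvature $\kappa=\phi^2/4\geq 0$, which proves the first two sentences of the corollary.

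For Bismut-flatness, $\phi$ being constant and $\nabla dV_g=0$ give $\nabla H=0$, killing the two $\nabla H$ terms in \Cref{P4}. Using the $3$-dimensional identity $\varepsilon_{abi}\varepsilon_{cdi}=\delta_{ac}\delta_{bd}-\delta_{ad}\delta_{bc}$, a direct expansion shows
\[
-\tfrac{1}{4}\langle H(X,W),H(Y,Z)\rangle+\tfrac{1}{4}\langle H(Y,W),H(X,Z)\rangle=\tfrac{\phi^2}{4}\bigl(\langle X,Z\rangle\langle Y,W\rangle-\langle X,W\rangle\langle Y,Z\rangle\bigr),
\]
which exactly cancels $Rm$ for sectional curvature $\phi^2/4$ and yields $Rm^+\equiv 0$. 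Finally, when $M$ is compact and $\phi\neq 0$, $(M,g)$ has constant positive sectional curvature, so by the classical space form theorem it is isometric to a quotient of the round $S^3$ by a finite subgroup of isometries acting freely.

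The only delicate step is the algebraic cancellation in the Bismut-curvature formula; it amounts to a short computation with the $3$-dimensional $\varepsilon$-identity, and everything else is immediate once one notices that the $3$-form is parallel.
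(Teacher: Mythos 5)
Your proposal is correct and follows essentially the same route as the paper: write $H=\phi\,dV_g$, show $\phi$ is constant, and compute $\tfrac14 H^2=\tfrac{\phi^2}{2}g$ to conclude the manifold is Einstein of constant nonnegative curvature, hence Bismut flat by \Cref{P4}. The only differences are cosmetic: you obtain constancy of $\phi$ directly from $d^*H=0$ (the paper routes this through \Cref{L3}), you verify the Bismut-flatness cancellation explicitly with the $\varepsilon$-identity rather than just citing \Cref{P4}, and you correctly flag that the quotient-of-$S^3$ conclusion requires $\phi\neq 0$ (when $\phi=0$ the metric is flat), a caveat the paper's proof passes over.
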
  
\begin{proof}
In the dimension 3 case, we may write $H= \phi dV_g$ where $\phi\in C^\infty (M)$. Suppose $(M,g,H)$ is a generalized Einstein manifold, then \Cref{L3} implies that $\phi\equiv C$ is a constant. Moreover,
\begin{align*}
    \Rc=\frac{1}{4}H^2=\frac{C^2}{2} g.
\end{align*}
Therefore, $(M,g,H)$ is an Einstein manifold with nonnegative sectional curvatures and it is Bismut flat by using \Cref{P4}.
\end{proof}

\section{Generalized Einstein--Hilbert Functional}
\begin{defn}\label{D13}
Let $E$ be an exact Courant algebroid over a smooth manifold $M$. The \emph{generalized Einstein--Hilbert functional} $\mathcal{F}: \Gamma(S^2M)\times \Omega^3 \times C^\infty (M)\to \mathbb{R}$ is given by 
\begin{align*}
    \mathcal{F}(g,H,f)=\int_M (R-\frac{1}{12}|H|^2+|\nabla f|^2)e^{-f}dV_g 
\end{align*}
and we define 
\begin{align*}
  \lambda(g,H)=\inf\Big\{\mathcal{F}(g,H,f)\big|\, f\in C^\infty(M),\,\int_M e^{-f}dV_g=1\Big \}. 
\end{align*}
\end{defn}

In this section, we will compute the first and second variation formula of $\lambda$. To begin with, let us consider the following notation.
\begin{align}
   (\mathring{R} h)_{jk}= R_{ijkl}h_{il}, \quad \divg \omega=\nabla_i\omega_i, \quad (\divg h)_i=\nabla_jh_{ji} \label{17}
\end{align}
and
\begin{align}
    \nonumber&\divg_f\omega=\nabla_i\omega_i-\nabla_i f\omega_i,\quad (\divg_fh)_i=\nabla_jh_{ji}-\nabla_jf h_{ji},
    \quad (\divg_f^*\omega)_{ij}=-\frac{1}{2}(\nabla_i\omega_j+\nabla_j\omega_i)=(\divg^*\omega)_{ij},
    \nonumber\\& \triangle_f=\triangle-\nabla f\cdot\nabla, \quad (d^*_fk)_j=(d^*k)_j+\nabla_i fk_{ij}, \label{18} \text{ where $\omega\in\Omega^1$, $k\in\Omega^2$ and $h\in \Gamma(S^2M)$.}
\end{align}

\subsection{First Variation Formula}

Before we discuss our main result, for completeness, we start to discuss the first variation formula of $\lambda$ which is derived in \cite{GRF}. 
\begin{lemma}[\cite{GRF} Lemma 6.7]\label{L5}
Let $(g(t),H(t),f(t))$ be a smooth family in $\mathcal{M}\times \Omega^3\times C^\infty(M)$ on a smooth compact manifold $M$. Assume 
\begin{align*}
    \frac{\partial }{\partial t}g=h, \quad \frac{\partial }{\partial t}H=dK, \quad \frac{\partial }{\partial t}f=\phi 
\end{align*}
where $(h,K,\phi)\in \Gamma(S^2M)\times\Omega^2\times C^\infty(M)$.  Then, the first variation formula of $\mathcal{F}$-functional is given by
\begin{align}
      \frac{\partial }{\partial t}\mathcal{F}(g,H,f)&\nonumber=\int_M\Big[\langle -\Rc+\frac{1}{4}H^2-\nabla^2f,h \rangle-\frac{1}{2}\langle d^*H+i_{\nabla f}H,K\rangle\Big]e^{-f}dV_g
      \\&\kern2em+\int_M (R-\frac{1}{12}|H|^2+2\triangle f-|\nabla f|^2)(\frac{\tr_gh}{2}-\phi)e^{-f}dV_g. \label{21}
\end{align}
\end{lemma}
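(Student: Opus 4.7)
The plan is to write $\mathcal{F}(g,H,f) = \int_M \Psi\, e^{-f}\, dV_g$ with $\Psi := R - \tfrac{1}{12}|H|^2 + |\nabla f|^2$ and differentiate by the product rule. Using $\partial_t dV_g = \tfrac{1}{2}(\tr_g h)\,dV_g$ and $\partial_t e^{-f} = -\phi\,e^{-f}$, the product rule immediately produces a piece $\int_M \Psi\bigl(\tfrac{\tr_g h}{2} - \phi\bigr) e^{-f}\,dV_g$, which will eventually merge into the second integrand of the claimed formula. Everything else is packaged in $\int_M (\partial_t \Psi)\,e^{-f}\,dV_g$, which I expand term by term.

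For $\partial_t \Psi$ I would use the standard pointwise variations
\begin{align*}
\partial_t R &= -\triangle \tr_g h + \divg\divg h - \langle \Rc, h\rangle,\\
\partial_t |H|^2 &= -3\langle H^2, h\rangle + 2\langle H, dK\rangle,\\
\partial_t |\nabla f|^2 &= -h(\nabla f, \nabla f) + 2\langle \nabla f, \nabla \phi\rangle.
\end{align*}
The factor of $3$ in the $|H|^2$ variation is the only nontrivial input: it comes from differentiating the three copies of $g^{-1}$ in $|H|^2 = g^{ia}g^{jb}g^{kc}H_{ijk}H_{abc}$, which by symmetry give three identical contributions $-\langle h, H^2\rangle$.

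Next I transfer all derivatives off $h$, $K$, $\phi$ onto the weight $e^{-f}$ using four weighted integration-by-parts identities on a compact manifold: $\int_M (\divg\divg h)\,e^{-f}dV = \int_M \langle h, -\nabla^2 f + df\otimes df\rangle e^{-f}dV$; $\int_M (\triangle \tr_g h)\,e^{-f}dV = -\int_M (\tr_g h)(\triangle f - |\nabla f|^2)\,e^{-f}dV$; $\int_M \langle H, dK\rangle e^{-f}dV = \int_M \langle d^*H + i_{\nabla f}H,\,K\rangle e^{-f}dV$ (this is exactly the statement that the $e^{-f}$-adjoint of $d$ on $3$-forms is $d^*_f = d^* + i_{\nabla f}$); and $\int_M 2\langle \nabla f,\nabla \phi\rangle e^{-f}dV = -2\int_M \phi(\triangle f - |\nabla f|^2)\,e^{-f}dV$. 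After substitution the terms $h(\nabla f, \nabla f)$ produced by the $\divg\divg$ identity and by $\partial_t|\nabla f|^2$ cancel, and what remains sorts naturally: the tensorial contractions against $h$ assemble into $\langle -\Rc - \nabla^2 f + \tfrac{1}{4}H^2,\,h\rangle$ (the $\tfrac{1}{4}H^2$ being $-\tfrac{1}{12}\cdot(-3)\langle H^2,h\rangle$), those against $K$ into $-\tfrac{1}{2}\langle d^*H + i_{\nabla f}H,\,K\rangle$, and the residual scalars combine into $2(\triangle f - |\nabla f|^2)\bigl(\tfrac{\tr_g h}{2} - \phi\bigr)$. Adding the $\Psi\bigl(\tfrac{\tr_g h}{2} - \phi\bigr)$ term from Paragraph~1 collapses the scalar factor to $R - \tfrac{1}{12}|H|^2 + 2\triangle f - |\nabla f|^2$, matching the target.

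The main obstacle is the bookkeeping in this collection step — in particular, verifying that the several $\triangle f$ and $|\nabla f|^2$ residues produced by the IBP identities (with mixed signs coming from $\partial_t R$, from $2\langle \nabla f,\nabla \phi\rangle$, and from the $\divg\divg$ identity) really do collapse into the Perelman combination $R + 2\triangle f - |\nabla f|^2$. Once this combinatorics is pinned down, the $H = 0$ case recovers Perelman's classical variation formula for the $\mathcal{F}$-functional verbatim, and the $H$-dependent terms slot in cleanly with just the factor of $3$ from $\partial_t|H|^2$ and the single IBP identity $\int \langle H, dK\rangle e^{-f} = \int \langle d^*_f H, K\rangle e^{-f}$.
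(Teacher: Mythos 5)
Your computation is correct and it is the standard Perelman-style first variation argument; note, however, that the paper itself gives no proof of this lemma (it is quoted directly from \cite{GRF}, Lemma 6.7), so there is no in-paper argument to compare against. Your decomposition into the measure variation $\Psi(\tfrac{\tr_g h}{2}-\phi)$ plus $\int(\partial_t\Psi)e^{-f}dV_g$, the three pointwise variation formulas (including the factor of $3$ from the three copies of $g^{-1}$ in $|H|^2$), the cancellation of the two $h(\nabla f,\nabla f)$ terms, and the collapse of the scalar residues into $2(\triangle f-|\nabla f|^2)(\tfrac{\tr_g h}{2}-\phi)$ all check out. The one point to tighten is the normalization in your third integration-by-parts identity: under the full-contraction convention used throughout this paper (the same one in which $\partial_t|H|^2=-3\langle H^2,h\rangle+2\langle H,dK\rangle$ and $\Rc=\tfrac14 H^2$ on $S^3$), antisymmetrization gives $\int_M\langle dK,H\rangle e^{-f}dV_g=3\int_M\langle K,\,d^*H+i_{\nabla f}H\rangle e^{-f}dV_g$; the clean adjoint statement $\int\langle dK,H\rangle e^{-f}=\int\langle K,d^*_fH\rangle e^{-f}$ without the $3$ holds only for the Hodge-normalized inner products $\tfrac{1}{p!}(\cdot,\cdot)$. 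Your final coefficient $-\tfrac12$ in front of $\langle d^*H+i_{\nabla f}H,K\rangle$ is correct precisely because $-\tfrac{1}{12}\cdot 2\cdot 3=-\tfrac12$, so the factor of $3$ is being used implicitly; as written, the stated identity and the stated conclusion are in different conventions, which is worth fixing even though it does not affect the result.
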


Following the same argument in the Ricci flow case (see \cite{MR2302600} for more details), we can deduce that for any $(g,H)$, the minimizer $f$ is always achieved. Moreover, $\lambda$ satisfies that
\begin{align}
    \lambda(g,H)=R-\frac{1}{12}|H|^2+2\triangle f-|\nabla f|^2 \label{22}
\end{align}
and it is the lowest eigenvalue of the Schrödinger operator $-4\triangle+R-\frac{1}{12}|H|^2.$ Then, we have

\begin{theorem}\label{T6}
Suppose $(g_t,H_t)$ are a one-parameter family such that 
\begin{align*}
    &\frac{\partial}{\partial t}\Big|_{t=0}g_t =h, \quad  g_0=g,
    \\&  \frac{\partial}{\partial t}\Big|_{t=0}H_t =dK, \quad  H_0=H
\end{align*}
and $f_t$ satisfies that $\lambda(g_t,H_t)=\mathcal{F}(g_t,H_t,f_t)$. Then 
\begin{align}
    \frac{d}{dt}\Big|_{t=0}\lambda(g_t,H_t)=\int_M \Big[\langle -\Rc+\frac{1}{4}H^2-\nabla^2f,h \rangle-\frac{1}{2}\langle d^*H+i_{\nabla f}H,K\rangle \Big]e^{-f}dV_g \label{23}
\end{align}
where $\lambda(g,H)=\mathcal{F}(g,H,f)$.
\end{theorem}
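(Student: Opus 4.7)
The plan is to derive the first variation formula for $\lambda$ from the already-established first variation formula for $\mathcal{F}$ in Lemma~\ref{L5}, by exploiting the fact that $f_t$ is the minimizer at each time. I would begin by invoking the standard spectral theory fact (mentioned just before the theorem) that the minimizer $f_t$ exists, is unique up to normalization, and depends smoothly on $t$, since $\lambda(g_t,H_t)$ is the principal eigenvalue of the Schrödinger-type operator $-4\triangle+R-\tfrac{1}{12}|H|^2$ and principal eigenvalues of selfadjoint elliptic operators are simple and analytic in smooth parameters. Write $\phi:=\tfrac{\partial}{\partial t}\big|_{t=0} f_t$.

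Next, since $\lambda(g_t,H_t)=\mathcal{F}(g_t,H_t,f_t)$, I would apply the chain rule and substitute $(h,dK,\phi)$ into formula (\ref{21}):
\begin{align*}
\frac{d}{dt}\Big|_{t=0}\lambda &= \int_M\Big[\langle -\Rc+\tfrac{1}{4}H^2-\nabla^2f,h\rangle-\tfrac{1}{2}\langle d^*H+i_{\nabla f}H,K\rangle\Big]e^{-f}dV_g \\
&\kern2em+\int_M \bigl(R-\tfrac{1}{12}|H|^2+2\triangle f-|\nabla f|^2\bigr)\bigl(\tfrac{\tr_g h}{2}-\phi\bigr)e^{-f}dV_g.
\end{align*}
The goal is to show the second integral vanishes.

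To kill that second integral, I would exploit two facts. First, differentiating the normalization constraint $\int_M e^{-f_t}dV_{g_t}=1$ at $t=0$ yields
\begin{align*}
\int_M \Bigl(\tfrac{\tr_g h}{2}-\phi\Bigr) e^{-f}dV_g=0,
\end{align*}
using the standard formula $\tfrac{\partial}{\partial t}dV_{g_t}=\tfrac{1}{2}\tr_g(h)\,dV_g$. Second, by equation (\ref{22}), at the minimizer the quantity $R-\tfrac{1}{12}|H|^2+2\triangle f-|\nabla f|^2$ equals $\lambda(g,H)$, which is a constant on $M$. Pulling this constant outside the integral and applying the constraint identity above then forces the second integral to vanish, leaving exactly the claimed formula.

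There is essentially no hard step; the main conceptual point is simply that, just as in Perelman's original calculation for the Ricci flow, the $\phi$-contribution drops out because the minimizing choice of $f_t$ is a critical point of $\mathcal{F}(g_t,H_t,\cdot)$ subject to the normalization constraint. The only thing one has to be modestly careful about is the smooth dependence of $f_t$ on $t$, which I would handle in one line by appealing to simplicity of the principal eigenvalue and analytic perturbation theory for selfadjoint elliptic operators, so that $\phi$ is well defined and the chain rule is justified.
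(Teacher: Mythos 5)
Your proposal is correct and follows exactly the route the paper intends: the paper omits an explicit proof of \Cref{T6}, but the ingredients it supplies immediately beforehand --- \Cref{L5}, the identity (\ref{22}) showing the bracketed quantity equals the constant $\lambda(g,H)$, and the normalization $\int_M e^{-f}dV_g=1$ --- are precisely the three facts you combine to kill the second integral. Your one-line justification of the smooth dependence of $f_t$ on $t$ via simplicity of the principal eigenvalue is also consistent with how the paper treats this point (it defers the careful regularity argument to \Cref{P8}).
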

\begin{corollary}[\cite{GRF} Proposition 6.8]\label{C4}
Suppose that $(g_t,H_t)$ is a solution of generalized Ricci flow, then 
\begin{align*}
    \frac{d}{dt}\lambda(g_t,H_t)=\int_M \Big[2|\Rc_t-\frac{1}{4}H_t^2+\nabla^2f_t|^2+\frac{1}{2}|d^*H_t+i_{\nabla f_t}H_t|^2\Big]e^{-f_t}dV_{g_t}\geq 0  \text{  where $\lambda(g_t,H_t)=\mathcal{F}(g_t,H_t,f_t)$}. 
\end{align*}
Therefore, $\lambda$ is strictly monotone increasing and $\frac{d}{dt}\lambda=0$ when $(g_t,H_t)$ is a compact steady gradient generalized Ricci soliton.

\end{corollary}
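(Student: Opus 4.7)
The plan is to specialize the first variation formula from Theorem~\ref{T6} to the generalized Ricci flow and complete the square via weighted integration by parts with respect to $e^{-f}dV_g$. Under GRF, the induced variations are
\[
h = \tfrac{\partial}{\partial t}g = -2\Rc + \tfrac{1}{2}H^2 = -2S, \qquad S := \Rc - \tfrac{1}{4}H^2,
\]
and $\tfrac{\partial}{\partial t}H = -d\,d^*H$, so we may take $K = -d^*H$. Any other primitive differs from this one by a closed 2-form $\kappa$, and the formula (\ref{23}) is insensitive to this ambiguity since
\[
\int_M \langle d^*H + i_{\nabla f}H, \kappa\rangle e^{-f} dV_g = \int_M \langle H, d\kappa\rangle e^{-f} dV_g = 0
\]
by $f$-weighted integration by parts. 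Substituting into (\ref{23}) yields
\[
\frac{d}{dt}\lambda = \int_M 2\langle S,\, S + \nabla^2 f\rangle e^{-f}dV_g + \tfrac{1}{2}\int_M \langle d^*H + i_{\nabla f}H,\, d^*H\rangle e^{-f}dV_g.
\]

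The main step is now completing the square. I would expand $2\langle S, S + \nabla^2 f\rangle = 2|S + \nabla^2 f|^2 - 2\langle S + \nabla^2 f, \nabla^2 f\rangle$ and write $d^*H = (d^*H + i_{\nabla f}H) - i_{\nabla f}H$ in the second integrand to isolate the target squares $2|S + \nabla^2 f|^2$ and $\tfrac{1}{2}|d^*H + i_{\nabla f}H|^2$. The leftover cross-terms,
\[
-2\int_M \langle S + \nabla^2 f, \nabla^2 f\rangle e^{-f} dV_g \quad\text{and}\quad -\tfrac{1}{2}\int_M \langle d^*H + i_{\nabla f}H,\, i_{\nabla f}H\rangle e^{-f}dV_g,
\]
are then integrated by parts using $\nabla_i(e^{-f}) = -\nabla_i f\, e^{-f}$: the first becomes $2\int_M \langle \divg_f(S + \nabla^2 f), \nabla f\rangle e^{-f}dV_g$, while the second reduces to a corresponding divergence expression in $H$. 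The $H$-twisted contracted second Bianchi identity, obtained by combining the ordinary second Bianchi identity with $dH = 0$ and the definition of $H^2$, exactly relates $\divg_f(S + \nabla^2 f)$ to $d^*H + i_{\nabla f}H$ so that the two cross-term expressions cancel.

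Assembling the pieces yields
\[
\frac{d}{dt}\lambda = \int_M \Big[\, 2\,|\Rc - \tfrac{1}{4}H^2 + \nabla^2 f|^2 + \tfrac{1}{2}|d^*H + i_{\nabla f}H|^2 \Big] e^{-f}dV_g \geq 0,
\]
which is the monotonicity. The equality statement for a steady gradient generalized Ricci soliton is then immediate: by (\ref{15}) both $\Rc - \tfrac{1}{4}H^2 + \nabla^2 f \equiv 0$ and $d^*H + i_{\nabla f}H \equiv 0$ hold pointwise, so the integrand vanishes identically. I expect the main obstacle to be the $H$-twisted Bianchi calculation that drives the cross-term cancellation; this is the Courant-algebroid analogue of the identity $2\divg\Rc = dR$ used in Perelman's original monotonicity formula, with the $H$ contributions and $i_{\nabla f}H$ coupling accounted for by the closedness of $H$ and the $f$-weighting.
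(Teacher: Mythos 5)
The paper itself offers no proof of this corollary --- it is quoted from \cite{GRF}, Proposition 6.8, where the argument runs through Perelman's gauge-fixing trick: one pulls back by the diffeomorphisms generated by $\nabla f_t$ and chooses $\partial_t f=\tfrac{1}{2}\tr_g h$ so that $e^{-f}dV_g$ is preserved, whereupon the last line of the first variation formula (\ref{21}) drops out and the first line is already the manifest sum of squares; diffeomorphism invariance of $\lambda$ then transfers the identity back to the unmodified flow. Your route --- substituting the flow directly into (\ref{23}) and cancelling the cross terms by $f$-weighted integration by parts --- is genuinely different, and it does work; your handling of the ambiguity in the primitive $K$ via $d^*_fH=d^*H+i_{\nabla f}H$ is also correct.

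However, the pivotal step is asserted with insufficient ingredients, and as stated it is false. You claim the cancellation follows from ``the ordinary second Bianchi identity with $dH=0$ and the definition of $H^2$.'' What those ingredients actually give (using $\nabla_iH^2_{ij}=-(d^*H)_{ab}H_{jab}+\tfrac{1}{6}\nabla_j|H|^2$ and the Bochner commutation $\triangle\nabla_jf=\nabla_j\triangle f+R_{jk}\nabla_kf$) is
\begin{align*}
\Big(\divg_f\big(\Rc-\tfrac{1}{4}H^2+\nabla^2f\big)\Big)_j=\tfrac{1}{2}\nabla_j\Big(R-\tfrac{1}{12}|H|^2+2\triangle f-|\nabla f|^2\Big)+\tfrac{1}{4}\big(d^*H+i_{\nabla f}H\big)_{ab}H_{jab},
\end{align*}
and only the second term on the right cancels your second cross term. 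The first term is killed precisely because $f_t$ is the minimizer, so that $R-\tfrac{1}{12}|H|^2+2\triangle f-|\nabla f|^2=\lambda$ is constant by (\ref{22}); without invoking this Euler--Lagrange equation you are left with a nonvanishing contribution $-\int\langle\nabla(R-\tfrac{1}{12}|H|^2+2\triangle f-|\nabla f|^2),\nabla f\rangle e^{-f}dV_g$. The hypothesis you need is available (the corollary assumes $\lambda(g_t,H_t)=\mathcal{F}(g_t,H_t,f_t)$), but your proof never uses it, and the ``twisted Bianchi identity'' alone does not close the argument. Supply the displayed identity and cite (\ref{22}) and the proof is complete.
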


\subsection{Second Variation Formula}

By \Cref{T6}, we know that steady gradient generalized Ricci solitons are critical points of $\lambda$. In the following, we will consider a one-parameter family $(g_t,H_t,f_t)$ which starts with a compact steady gradient generalized Ricci soliton $(M,g,H,f)$ and compute the second variation formula of $\lambda$. Being motivated by \cite{MR3430881} and \cite{C2}, we need the following two lemmas.
\begin{lemma}\label{L6}
Given a compact steady gradient generalized Ricci soliton $(M,g,H,f)$. Suppose $(g_t,H_t,f_t)$ is a one-parameter family such that 
\begin{align*}
    &\frac{\partial}{\partial t}\Big|_{t=0}g_t =h, \quad  g_0=g,
    \\&  \frac{\partial}{\partial t}\Big|_{t=0}H_t =dK, \quad  H_0=H,
    \\& \frac{\partial}{\partial t}\Big|_{t=0}f_t =\phi, \quad  f_0=f
\end{align*}
and $\lambda(g_t,H_t)=\mathcal{F}(g_t,H_t,f_t)$. We have
\begin{align*}
    \nonumber&\kern-1em \frac{\partial}{\partial t}\Big|_{t=0} \Big(R_{ij}-\frac{1}{4} H^2_{ij}+\nabla_i\nabla_j f\Big)+ (\divg_f^* \divg_fh)_{ij}
    \\&= -(\mathring{R} h)_{ij}-\frac{1}{2}\triangle_f h_{ij}-\nabla_i\nabla_j(\frac{\tr_g h}{2}-\phi)+\frac{1}{8}(h_{jk}H^2_{ik}+h_{ik}H^2_{jk})+\frac{1}{2}h_{ac}H_{iab}H_{jcb}
    \\& \kern2em-\frac{1}{4}\Big((dK)_{iab}H_{jab}+H_{iab}(dK)_{jab}\Big).
\end{align*}

\end{lemma}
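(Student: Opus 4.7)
The argument is a direct pointwise computation: I would differentiate $R_{ij}$, $-\tfrac{1}{4}H^2_{ij}$, and $\nabla_i\nabla_j f$ separately at $t=0$, add the expansion of $(\divg^*_f \divg_f h)_{ij}$ coming from (\ref{18}), and finally invoke only the first soliton equation in (\ref{15}) to eliminate the Ricci tensor. The second soliton equation $d^*H + i_{\nabla f}H = 0$ plays no role here.

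For $\partial_t R_{ij}|_{t=0}$, the Palatini formula together with one commutation of covariant derivatives gives the standard Lichnerowicz identity $\partial_t R_{ij} = -\tfrac{1}{2}\triangle h_{ij} - (\mathring{R}h)_{ij} + \tfrac{1}{2}(R_{ik}h_{kj} + R_{jk}h_{ki}) - \tfrac{1}{2}\nabla_i\nabla_j\tr_g h + \tfrac{1}{2}(\nabla_i(\divg h)_j + \nabla_j(\divg h)_i)$, with $\mathring{R}$ normalized as in (\ref{17}). Writing $H^2_{ij} = g^{ac}g^{bd}H_{iab}H_{jcd}$ and splitting the $t$-derivative into metric and form pieces, after relabeling and using the antisymmetry of $H$ the two metric-variation contributions coincide, yielding $\partial_t(-\tfrac{1}{4}H^2_{ij}) = \tfrac{1}{2}h_{ac}H_{iab}H_{jcb} - \tfrac{1}{4}[(dK)_{iab}H_{jab} + H_{iab}(dK)_{jab}]$. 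Finally, the Christoffel variation contributes $\partial_t(\nabla_i\nabla_j f)|_{t=0} = \nabla_i\nabla_j\phi - \tfrac{1}{2}(\nabla_i h_{jk} + \nabla_j h_{ik} - \nabla_k h_{ij})\nabla_k f$.

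Expanding $(\divg^*_f \divg_f h)_{ij}$ via (\ref{18}) produces three groups: a divergence piece $-\tfrac{1}{2}(\nabla_i(\divg h)_j + \nabla_j(\divg h)_i)$, which exactly cancels its counterpart in $\partial_t R_{ij}$; a Hessian-times-$h$ piece $\tfrac{1}{2}[(\nabla_i\nabla_k f)h_{kj} + (\nabla_j\nabla_k f)h_{ki}]$; and a drift piece $\tfrac{1}{2}\nabla_k f(\nabla_i h_{kj} + \nabla_j h_{ki})$. The drift piece combines with the Christoffel remainder of $\partial_t(\nabla_i\nabla_j f)$: after invoking the symmetry of $h$, the mixed derivatives $\nabla_i h_{jk}$ and $\nabla_j h_{ik}$ cancel identically, and only $\tfrac{1}{2}\nabla_k f\,\nabla_k h_{ij}$ survives, joining $-\tfrac{1}{2}\triangle h_{ij}$ to produce $-\tfrac{1}{2}\triangle_f h_{ij}$. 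For the Hessian-times-$h$ piece I would apply the soliton identity $\nabla_i\nabla_k f = \tfrac{1}{4}H^2_{ik} - R_{ik}$: the resulting $R\cdot h$ contributions annihilate the $\tfrac{1}{2}(R_{ik}h_{kj} + R_{jk}h_{ki})$ coming from $\partial_t R_{ij}$, and what is left is exactly $\tfrac{1}{8}(h_{jk}H^2_{ik} + h_{ik}H^2_{jk})$. Collecting every surviving contribution recovers the right-hand side of the lemma, with the remaining trace terms assembling into $-\nabla_i\nabla_j(\tfrac{\tr_g h}{2} - \phi)$.

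The computation is conceptually routine, so I expect the main obstacle to be purely bookkeeping: keeping the sign convention (\ref{17}) for $\mathring{R}$ consistent with the Lichnerowicz formula, checking that both metric-variation contributions to $\partial_t H^2_{ij}$ truly coincide (this needs the full antisymmetry of $H$ in all three indices), and verifying that the soliton substitution exactly kills the Ricci cross terms rather than doubling them. A sign slip at any of these three points would block the final cancellation.
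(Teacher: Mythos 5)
Your proposal is correct and follows essentially the same route as the paper: differentiate $\Rc$, $H^2$, and $\nabla^2 f$ separately, expand $(\divg^*_f\divg_f h)_{ij}$ into its divergence, Hessian-times-$h$, and drift pieces, and substitute the first soliton equation to trade $\nabla\nabla f$ for $\frac{1}{4}H^2-\Rc$ so the Ricci cross terms cancel. All the individual variation formulas and the final bookkeeping (including the observation that $d^*H+i_{\nabla f}H=0$ is not needed) match the paper's computation.
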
  

\begin{proof}
For convenience, we denote $\frac{\partial}{\partial t}\Big|_{t=0}$ by $\delta$ and we compute the variation of $\Rc,H^2,\nabla^2f$, respectively.
\begin{align*}
     \delta R_{ij}&=-\frac{1}{2}\triangle_L h_{ij}-(\divg^*\divg h)_{ij}-\frac{1}{2}\nabla_i\nabla_j(\text{tr}_gh)
     \\&=-\frac{1}{2}\triangle h_{ij}-(\mathring{R} h)_{ij}+\frac{1}{2}(R_{ik}h_{kj}+h_{ij}R_{kj})-(\divg^*\divg h)_{ij}-\frac{1}{2}\nabla_i\nabla_j(\text{tr}_gh).
 \\\delta H^2_{ij}&=-2h_{ac}H_{iab}H_{jcb}+(d K)_{iab}H_{jab}+(d K)_{jab}H_{iab}.
    \\\delta(\nabla_i\nabla_j f)&=\nabla_i\nabla_j(\phi)- \frac{1}{2}(\nabla_i h_{jk}+\nabla_j h_{ik}-\nabla_k h_{ij})\nabla_k f .
\end{align*}
Also, $\divg_f^* \divg_fh$ is given by  
\begin{align*}
    (\divg_f^* \divg_fh)_{ij}&=-\frac{1}{2}(\nabla_i(\divg_fh)_j+\nabla_j(\divg_fh)_i)
    \\&=-\frac{1}{2}(\nabla_i(\nabla_k h_{jk}-h_{jk}\nabla_k f)+\nabla_j(\nabla_k h_{ik}-h_{ik}\nabla_k f))
    \\&=-\frac{1}{2}(\nabla_i\nabla_k h_{jk}+\nabla_j\nabla_k h_{ik}-h_{jk}\nabla_i\nabla_k f-h_{ik}\nabla_j\nabla_k f-\nabla_ih_{jk}\nabla_k   f-\nabla_jh_{ik}\nabla_k f)
    \\&=(\divg^*\divg h)_{ij}-\frac{1}{2}\Big(h_{jk}(R_{ik}-\frac{1}{4}H^2_{ik})+h_{ik}(R_{jk}-\frac{1}{4}H^2_{jk})-\nabla_ih_{jk}\nabla_k f-\nabla_jh_{ik}\nabla_k f\Big).
\end{align*}
Since $\Rc-\frac{1}{4}H^2+\nabla^2 f=0$ on the steady gradient generalized Ricci soliton, we complete our proof.
\end{proof}

\begin{lemma}\label{L7}
Given a compact steady gradient generalized Ricci soliton $(M,g,H,f)$, suppose $(g_t,H_t,f_t)$ is a one-parameter family such that 
\begin{align*}
    &\frac{\partial}{\partial t}\Big|_{t=0}g_t =h, \quad  g_0=g,
    \\&  \frac{\partial}{\partial t}\Big|_{t=0}H_t =dK, \quad  H_0=H,
    \\& \frac{\partial}{\partial t}\Big|_{t=0}f_t =\phi, \quad  f_0=f
\end{align*}
and $\lambda(g_t,H_t)=\mathcal{F}(g_t,H_t,f_t)$. We have
\begin{align*}
    \triangle_f(\tr_g h-2\phi)=\divg_f \divg_fh-\frac{1}{6}\langle dK,H \rangle. 
\end{align*}
\end{lemma}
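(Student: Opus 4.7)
The plan is to exploit the pointwise Euler--Lagrange characterization of the minimizer, namely the identity
\[
\lambda(g_t,H_t) = R_t - \tfrac{1}{12}|H_t|^2 + 2\triangle_{g_t} f_t - |\nabla f_t|^2
\]
from (\ref{22}). Since the right-hand side is pointwise equal to a constant, differentiating in $t$ at $t=0$ yields a pointwise identity whose left-hand side varies in $x$ while the right-hand side equals the constant $\tfrac{d}{dt}\big|_{t=0}\lambda$. On a steady gradient soliton this constant vanishes by \Cref{T6} (both variation terms die because $\Rc-\tfrac14H^2+\nabla^2 f=0$ and $d^*H+i_{\nabla f}H=0$), so we obtain a pure pointwise equation.

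Next I would compute each of the four variations. Using standard formulas:
\begin{align*}
\delta R &= -\triangle(\tr_gh) + \divg\divg h - \langle \Rc, h\rangle,\\
\delta|H|^2 &= -3 h_{ij}H^2_{ij} + 2\langle H, dK\rangle,\\
\delta(\triangle f) &= \triangle\phi - \langle h,\nabla^2 f\rangle - \langle \divg h - \tfrac12\nabla\tr_g h,\nabla f\rangle,\\
\delta|\nabla f|^2 &= -h(\nabla f,\nabla f) + 2\langle \nabla f,\nabla\phi\rangle.
\end{align*}
The terms involving $\nabla f$ must be reorganized to form the drift-Laplacian $\triangle_f = \triangle - \nabla f\cdot\nabla$ and the drift-divergence. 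In particular, a direct computation gives the identity
\[
\divg_f\divg_f h = \divg\divg h - 2\langle \divg h,\nabla f\rangle + h(\nabla f,\nabla f) - \langle h,\nabla^2 f\rangle,
\]
so the ``$\divg\divg$ cluster'' coming from $\delta R$ plus the pieces produced by $\delta(\triangle f)$ and $\delta|\nabla f|^2$ assemble into $\divg_f\divg_f h$, up to a remainder of the form $\langle h,\nabla^2 f\rangle$. Likewise the $\triangle$ and $\nabla f\cdot\nabla$ pieces acting on $\tr_g h$ and $\phi$ combine into $-\triangle_f(\tr_g h - 2\phi)$.

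The step I expect to require the most care is the algebraic cancellation of the curvature terms. After grouping, the leftover algebraic (non-differential) contributions are
\[
-\langle h,\Rc\rangle + \tfrac14 h_{ij}H^2_{ij} - 2\langle h,\nabla^2 f\rangle - \langle h,\nabla^2 f\rangle_{\text{from }\divg_f\divg_f},
\]
and the soliton identity $\Rc - \tfrac14 H^2 + \nabla^2 f = 0$ must be applied precisely to collapse these to zero. With that cancellation and the observation that the $\langle H,dK\rangle$ term arises with coefficient $-\tfrac{1}{6}$ from the $-\tfrac{1}{12}\delta|H|^2$ contribution, the pointwise differentiated equation reduces to
\[
-\triangle_f(\tr_g h - 2\phi) + \divg_f\divg_f h - \tfrac16\langle dK, H\rangle \;=\; \tfrac{d}{dt}\Big|_{t=0}\lambda \;=\; 0,
\]
which is the asserted identity. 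No global integration is needed; the content is entirely the pointwise Euler--Lagrange equation together with the soliton identities.
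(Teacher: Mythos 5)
Your proposal follows exactly the paper's proof: differentiate the pointwise identity (\ref{22}), use that the first variation of $\lambda$ vanishes at a steady gradient soliton, compute the four variations $\delta R$, $\delta|H|^2$, $\delta(\triangle f)$, $\delta|\nabla f|^2$ (your formulas agree with the paper's), and regroup the derivative terms into $\triangle_f(\tr_g h-2\phi)$ and $\divg_f\divg_f h$ so that the soliton equation kills the algebraic remainder. The only quibble is a sign slip in your list of leftover terms: substituting $\divg_f\divg_f h=\divg\divg h-2\langle\divg h,\nabla f\rangle+h(\nabla f,\nabla f)-\langle h,\nabla^2 f\rangle$ leaves a remainder $+\langle h,\nabla^2 f\rangle$ rather than $-\langle h,\nabla^2 f\rangle$, so the total algebraic leftover is $-\langle h,\Rc-\tfrac14 H^2+\nabla^2 f\rangle$, which indeed vanishes as you claim.
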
  
\begin{proof}
Because the first variation of $\lambda$ vanishes on a steady gradient generalized Ricci soliton, by taking the derivative of (\ref{22}), we derive
\begin{align*}
    0=\delta \lambda=\delta R-\frac{1}{12}\delta|H|^2+2\delta\triangle f-\delta|\nabla f|^2.
\end{align*}
Also, we compute
\begin{align*}
    \delta R&=-h_{ij}R_{ij}+\nabla_i\nabla_k h_{ik}-\triangle\text{tr}_gh=\frac{-1}{4}h_{ij}H^2_{ij}+h_{ij}\nabla_i\nabla_j f+\nabla_i\nabla_k h_{ik}-\triangle\text{tr}_gh,
    \\\delta |H|^2&=-3h_{ij}H^2_{ij}+2(dK)_{ijk}h_{ijk},
    \\\delta(\triangle f)&=-h_{ij}\nabla_i\nabla_j f+g^{ij}\delta(\nabla_i\nabla_j f)=-h_{ij}\nabla_i\nabla_j f+\triangle(\phi)-\nabla_i h_{ik}\nabla_k f+\frac{1}{2}\nabla_k \text{tr}_gh\nabla_k f,
    \\\delta(|\nabla f|^2)&=-h_{ij}\nabla_i f\nabla_j f+2\nabla_if\nabla_i\phi.
\end{align*}
Therefore, we see that 
\begin{align*}
 0=-\triangle(\text{tr}_gh-2\phi)+\langle \nabla(\text{tr}_gh-2\phi),\nabla f \rangle-\frac{1}{6}(dK)_{ijk}H_{ijk}+\divg_f \divg_fh.
\end{align*}
Finally,
\begin{align*}
    \divg_f \divg_fh&=\nabla_i(\nabla_jh_{ij}-h_{ij}\nabla_j f)-\nabla_i f(\nabla_jh_{ij}-h_{ij}\nabla_j f)
    \\&=\nabla_i\nabla_j h_{ij}-h_{ij}\nabla_i\nabla_j f-2\nabla_i f\nabla_jh_{ij}+h_{ij}\nabla_i f\nabla_j f
\end{align*}
and we complete the proof.
\end{proof}

Now, we are able to compute the second variation of $\lambda$.
\begin{theorem}\label{T7}
Suppose $(g_t,H_t)$ is a one-parameter family such that 
\begin{align*}
    &\frac{\partial}{\partial t}\Big|_{t=0}g_t =h, \quad  g_0=g,
    \\&  \frac{\partial}{\partial t}\Big|_{t=0}H_t =dK, \quad  H_0=H
\end{align*}
and $f_t$ satisfies that $\lambda(g_t,H_t)=\mathcal{F}(g_t,H_t,f_t)$.
The second variation of $\lambda$ on a compact steady gradient generalized Ricci soliton $(M,g,H,f)$ is given by
\begin{align}
     \nonumber\frac{d^2}{dt^2}\Big|_{t=0}\lambda&=\int_M \Big\langle \frac{1}{2}\triangle_f h+(\mathring{R} h)+\divg^*_f \divg_fh,h \Big\rangle e^{-f}dV_g
   \nonumber \\&\kern2em + \int_M \Big[-\frac{1}{4}h_{ij}h_{ik}H^2_{jk}-\frac{1}{2}h_{ij}h_{ac}H_{iab}H_{jcb}+h_{ij}(dK)_{iab}H_{jab}\Big]e^{-f}dV_g
      \nonumber\\&\kern2em -\frac{1}{6}\int_M |dK|^2 e^{-f}dV_g-\int_M \frac{1}{2}|\nabla v_{h,K}|^2e^{-f}dV_g, \label{24}
\end{align} 
where $v_{h,K}$ is the unique (up to a constant) solution of $\triangle_f(v_{h,K})=\divg_f \divg_fh-\frac{1}{6}\langle dK,H \rangle$.
\end{theorem}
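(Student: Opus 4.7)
The plan is to differentiate the first variation formula (\ref{23}) in $t$ once more and evaluate at the soliton. Set $E := -\Rc + \tfrac14 H^2 - \nabla^2 f$ and $G := d^*H + i_{\nabla f}H$; by (\ref{15}) both vanish at $(g,H,f)$. Writing the first variation at a general $t$ as
$$\frac{d}{dt}\lambda(g_t,H_t) = \int_M\bigl[\langle E_t,\dot g_t\rangle - \tfrac12\langle G_t, K_t\rangle\bigr]e^{-f_t}dV_{g_t},$$
the product rule applied at $t=0$ kills every term other than those in which $E_t$ or $G_t$ is differentiated, since the remaining factors are multiplied by zero. Thus, writing $\delta := \tfrac{\partial}{\partial t}\big|_{t=0}$,
$$\frac{d^2}{dt^2}\Big|_{t=0}\lambda = \int_M\langle \delta E, h\rangle e^{-f}dV_g - \tfrac12\int_M\langle \delta G, K\rangle e^{-f}dV_g.$$

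For the first integral I plug in the explicit formula for $\delta E$ provided by Lemma \ref{L6}. All but one term lands immediately in (\ref{24}): the $\tfrac12\triangle_f h$, $\mathring R h$, $\divg_f^*\divg_f h$, the two $h$-quadratic $H^2$ pieces, and one half of the cross term $h_{ij}(dK)_{iab}H_{jab}$ show up directly. The remaining contribution is the Hessian term $\langle \nabla^2 u, h\rangle$ with $u := \tfrac12\tr_g h - \phi = \tfrac{v}{2}$. Two integrations by parts against $e^{-f}dV_g$, each converting $\nabla$ into $\divg_f$ via (\ref{18}), rewrite it as $\int u\,\divg_f\divg_f h\, e^{-f}dV_g$. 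Substituting the identity $\divg_f\divg_f h = \triangle_f v + \tfrac{1}{6}\langle dK,H\rangle$ from Lemma \ref{L7} and using self-adjointness of $\triangle_f$ in weighted $L^2$ yields
$$-\tfrac12\int_M|\nabla v|^2\, e^{-f}dV_g + \tfrac1{12}\int_M v\,\langle dK,H\rangle\, e^{-f}dV_g,$$
i.e.\ the desired $|\nabla v_{h,K}|^2$ term together with an extraneous cross contribution to be canceled by the second integral.

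The main computation is the second integral. I expand $\delta G = \delta(d^*_g H) + \delta(i_{\nabla f}H)$ in a local normal frame, using $\delta\Gamma^c_{ab} = \tfrac12 g^{cd}(\nabla_a h_{bd} + \nabla_b h_{ad} - \nabla_d h_{ab})$, $\delta g^{ab} = -h^{ab}$, $\delta H = dK$ and $\delta(\nabla^a f) = -h^{ab}\nabla_b f + \nabla^a\phi$, then pair the result with $K$ and integrate by parts against $e^{-f}dV_g$ so that $d^*$ and $\divg$ become their $f$-twisted counterparts. Applying the soliton identities $\nabla^2 f = -\Rc + \tfrac14 H^2$, $d^*H + i_{\nabla f}H = 0$, together with $dH=0$ and the Bianchi identity, consolidates the profusion of produced terms into three clean pieces: (i) a second copy of $\tfrac12\, h_{ij}(dK)_{iab}H_{jab}$, which combines with its counterpart from $\delta E$ to give the unit coefficient in (\ref{24}); (ii) the quadratic $-\tfrac16\int|dK|^2 e^{-f}dV_g$ coming from the leading piece $d^*_f(dK)$ paired with $K$; and (iii) a $\nabla\phi$-driven contribution that, once $\phi$ is rewritten as $\tfrac12(\tr_g h - v)$ and $\divg_f\bigl(H^a{}_{jk}K^{jk}\bigr)$ is reduced using the second soliton identity, produces exactly $-\tfrac1{12}\int v\,\langle dK,H\rangle e^{-f}dV_g$, canceling the extraneous cross contribution from the first integral (the surviving $\tr_g h$ piece is absorbed into the $h\cdot dK\cdot H$ cross term above).

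The principal obstacle is organizational rather than conceptual: $\delta G$ generates a sizable collection of Christoffel- and $\nabla f$-variation terms that are not manifestly of the final shape, and matching them to the three clean pieces above requires careful use of $dH=0$, the two soliton identities, and the $f$-twisted integration by parts of (\ref{18}) to merge apparently distinct terms. Once that bookkeeping is carried out, summing the two integrals yields (\ref{24}).
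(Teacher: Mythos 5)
Your proposal is correct and follows essentially the same route as the paper: differentiate the first variation (\ref{23}) at the soliton, feed in \Cref{L6} for the $h$-pairing, expand $\delta(d^*H+i_{\nabla f}H)$ against $K$ with $f$-twisted integration by parts, and invoke \Cref{L7} to produce the $-\tfrac12|\nabla v_{h,K}|^2$ term. The only (harmless) difference is mirror-image bookkeeping — the paper applies \Cref{L7} to the $-\tfrac16\langle dK,H\rangle(\tfrac{\tr_g h}{2}-\phi)$ term from the second integral so that the resulting $-\tfrac12\langle\nabla^2 v,h\rangle$ cancels the Hessian term of \Cref{L6}, whereas you apply it to the Hessian term and cancel the $\tfrac1{12}\int v\langle dK,H\rangle$ remainders against each other (and your leftover $\tr_g h\,\langle dK,H\rangle$ piece cancels against the trace term $-\tfrac12\nabla_p h_{ll}H_{pij}$ arising in $\delta(d^*H)$, rather than being absorbed into $h_{ij}(dK)_{iab}H_{jab}$).
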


\begin{proof}
First, we recall the first variation formula (\ref{23})
\begin{align*}
    \frac{d}{dt}\Big|_{t=0}\lambda=\int_M\Big[\langle -\Rc+\frac{1}{4}H^2-\nabla^2f,h \rangle-\frac{1}{2}\langle d^*H+i_{\nabla f}H,K\rangle\Big]e^{-f}dV_g.
\end{align*}
Since $(g,H,f)$ is a gradient generalized Ricci soliton, we see that the second variation is 
\begin{align*}
    \frac{d^2}{dt^2}\Big|_{t=0}\lambda&= \int_M\Big[\langle -\delta \Rc+\frac{1}{4}\delta H^2-\delta \nabla^2f,h \rangle-\frac{1}{2}\langle \delta(d^*H+i_{\nabla f}H),K\rangle\Big]e^{-f}dV_g.
\end{align*}
By \Cref{L6}, 
\begin{align*}
    &\kern-1em \int_M\langle -\delta \Rc+\frac{1}{4}\delta H^2-\delta \nabla^2f,h \rangle e^{-f}dV_g
    \\&=\int_M \langle \frac{1}{2}\triangle_f h+(\mathring{R} h)+\divg^*_f \divg_fh+\nabla^2(\frac{\text{tr}_gh}{2}-\phi),h \rangle e^{-f}dV_g
    \\&\kern2em+ \int_M \Big[-\frac{1}{4}h_{ij}h_{ik}H^2_{jk}-\frac{1}{2}h_{ij}h_{ac}H_{iab}H_{jcb}+\frac{1}{2}h_{ij}(dK)_{iab}H_{jab}\Big]e^{-f}dV_g.
\end{align*}
Next, we have
\begin{align*}
    \delta (d^* H)_{ij}&=h_{ml}\nabla_l H_{mij}-\nabla_l(dK_{lij})+(\nabla_l h_{lp}-\frac{1}{2}\nabla_p h_{ll})H_{pij}
    \\& \kern2em +\frac{1}{2}(\nabla_l h_{ip}+\nabla_i h_{lp}-\nabla_p h_{li})H_{lpj}+\frac{1}{2}(\nabla_l h_{jp}+\nabla_j h_{lp}-\nabla_p h_{lj})H_{lip}.
\end{align*}
Then, we compute
\begin{align*}
    &\kern-1em \int_M K_{ij}\delta (d^* H)_{ij} e^{-f}dV_g
    \\&=\int_M K_{ij}\Big[h_{ml}\nabla_l H_{mij}-\nabla_l(dK_{lij})+(\nabla_l h_{lp}-\frac{1}{2}\nabla_p h_{ll})H_{pij}+2K_{ij}\nabla_l h_{ip}H_{lpj}\Big]e^{-f}dV_g
    \\&=\int_M \Big[-\nabla_l(K_{ij}e^{-f})h_{ml}H_{mij}-K_{ij}\nabla_l(dK)_{lij}e^{-f}-\frac{1}{2}(\nabla_p\text{tr}_gh)K_{ij}H_{pij}e^{-f}-2h_{ip}\nabla_l K_{ij}H_{lpj}e^{-f}\Big]dV_g
    \\&=\int_M \Big[-(dK)_{lij}h_{ml}H_{mij}+K_{ij}h_{ml}H_{mij}\nabla_l f+
    \frac{1}{3}|dK|^2-K_{ij}(dK)_{lij}\nabla_lf+\frac{1}{6}\text{tr}_gh \langle dK,H \rangle\Big] e^{-f}dV_g
\end{align*}
where we use the fact that $d^*H+i_{\nabla f}H=0$. On the other hand,
\begin{align*}
    \delta(i_{\nabla f} H)_{ij}=\nabla_l\phi H_{lij}+\nabla_lf (dK)_{lij}-h_{lk}H_{kij}\nabla_lf.
\end{align*}
Hence,
\begin{align*}
   \int_M K_{ij}\delta(i_{\nabla f} H)_{ij}e^{-f}dV_g&=-\int_M K_{ij}h_{lk}H_{kij}\nabla_lfe^{-f}dV_g +\int_M K_{ij}[\nabla_l\phi H_{lij}+\nabla_l f (dK)_{lij}]e^{-f}dV_g
   \\&=-\int_M K_{ij}h_{lk}H_{kij}\nabla_lfe^{-f}dV_g-\frac{1}{3}\int_M \langle dK,H \rangle\phi e^{-f}dV_g+\int_M (dK)_{lij}K_{ij}\nabla_l fe^{-f}dV_g
\end{align*}
and
\begin{align*}
    &\kern-1em -\frac{1}{2}\int_M K_{ij}\delta \Big((d^* H)_{ij}+\delta(i_{\nabla f} H)_{ij}\Big) e^{-f}dV_g
    \\&=\frac{1}{2}\int_M h_{kl}H_{kij}(dK)_{lij}e^{-f}dV_g-\frac{1}{6}\int_M |dK|^2 e^{-f}dV_g-\frac{1}{6} \int_M \langle dK,H \rangle(\frac{\text{tr}_gh}{2}-\phi)e^{-f}dV_g.
\end{align*}

Let $v_{h,K}=\text{tr}_gh-2\phi$. \Cref{L7} implies that 
\begin{align*}
    \triangle_f(v_{h,K})=\divg_f \divg_fh-\frac{1}{6}\langle dK,H \rangle
\end{align*}
so we may write
\begin{align*}
   -\frac{1}{6} \int_M \langle dK,H \rangle(\frac{\text{tr}_gh}{2}-\phi)e^{-f}dV_g&= \int_M \Big( \triangle_f(v_{h,K})-\divg_f \divg_fh\Big)(\frac{v_{h,K}}{2})e^{-f}dV_g
   \\&=-\int_M \frac{1}{2}|\nabla v_{h,K}|^2e^{-f}dV_g-\int_M \frac{1}{2}\langle \nabla^2 v_{h,K},h \rangle e^{-f}dV_g.
\end{align*}

We remain to check the uniqueness of $v_{h,K}$. Suppose $ \triangle_f v_{h,K}=0$ then
\begin{align*}
    0=\int_M v_{h,K}  \triangle_f v_{h,K} e^{-f}dV_g= \int_M -|\nabla v_{h,K}|^2e^{-f}dV_g
\end{align*}
which implies that $v_{h,K}$ is unique up to a constant.
\end{proof}

Next, we want to find an operator to represent the second variation formula. Before we start it, let us fix our $L^2$ inner product
\begin{align}
    \Big((h_1,K_1,v_{h_1,K_1}),(h_2,K_2,v_{h_2,K_2})\Big)_{f}=\int_M \Big(\langle h_1,h_2 \rangle+\langle K_1,K_2 \rangle+ v_{h_1,K_1} v_{h_2,K_2}\Big)e^{-f}dV_g \label{25}
\end{align}
where $\langle K_1,K_2 \rangle$ is the standard inner product induced by metric $g$ on $\Omega^2$.

Define operators $A: \Gamma(S^2M)\to \Gamma(S^2M)$, $B: \Omega^2\to \Omega^2$, $C:\Omega^2\to \Gamma(S^2M) $ and $D:\Gamma(S^2M)\to \Omega^2$ by
\begin{align*}
    A(h)_{ij}&\coloneqq \frac{1}{2}\triangle_f h_{ij}+(\mathring{R} h)_{ij}+(\divg^*_f \divg_fh)_{ij}-\frac{1}{8}h_{ik}H^2_{jk}-\frac{1}{8}h_{jk}H^2_{ik}-\frac{1}{2}h_{ac}H_{iab}H_{jcb},
    \\ B(K)_{ij}&\coloneqq-\frac{1}{2}(d^*dK)_{ij}-\frac{1}{2}(dK)_{lij}\nabla_l f,
    \\ C(K)_{ij}&\coloneqq\frac{1}{4}\Big((dK)_{iab}H_{jab}+H_{iab}(dK)_{jab}\Big),
    \\ D(h)_{ij}&\coloneqq\frac{1}{2}\Big(-h_{ab}\nabla_a H_{bij}-(\divg h)_b H_{bij}+\nabla_a h_{ib}H_{baj}+\nabla_a h_{jb}H_{bia}+h_{ab}H_{bij}\nabla_a f \Big)
\end{align*}
and we define 
\begin{align}
    \nonumber\mathcal{N}:\quad &\Gamma(S^2M)\times \Omega^2\times C^\infty(M)  \longrightarrow \Gamma(S^2M)\times \Omega^2 \times C^\infty(M)
    \\&  \quad (h,K,v_{h,K})\longmapsto (A(h)+C(K),B(K)+D(h),\frac{\triangle_f v_{h,K}}{2}). \label{26}
\end{align}

\begin{corollary}\label{C5}
Define $\mathcal{N}$ as above, the second variation of $\lambda$ on a compact steady gradient generalized Ricci soliton $(M^n,g,H,f)$ is given by
\begin{align*}
   \frac{d^2}{dt^2}\Big|_{t=0}\lambda=\int_M \langle \mathcal{N}(h,K,v_{h,K}),(h,K,v_{h,K}) \rangle e^{-f}dV_g=\Big(\mathcal{N}(h,K,v_{h,K}),(h,K,v_{h,K})\Big)_{f}
\end{align*}
where $\Big(\cdot , \cdot \Big)_{f}$ is a $L^2$ inner product given by \emph{(\ref{25})}. Moreover, $\mathcal{N}$ is self-adjoint.
\end{corollary}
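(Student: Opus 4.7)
Both assertions follow from a direct computation of the quadratic form $(\mathcal{N}(h,K,v_{h,K}),(h,K,v_{h,K}))_f$ using the definitions of $A$, $B$, $C$, $D$ and the weighted inner product (\ref{25}). The plan is to expand this pairing block by block and match term-by-term with (\ref{24}). First, $(A(h),h)_f$ reproduces the first two lines of (\ref{24}) once one combines the two $-\tfrac{1}{8}$ contributions via the symmetry of $h$ into the single $-\tfrac{1}{4}h_{ij}h_{ik}H^2_{jk}$ term. Next, $(\tfrac{1}{2}\triangle_f v_{h,K},v_{h,K})_f$ yields $-\tfrac{1}{2}\int|\nabla v_{h,K}|^2 e^{-f}dV_g$ after a single integration by parts. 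Finally, $(B(K),K)_f$ produces $-\tfrac{1}{6}\int|dK|^2 e^{-f}dV_g$ after integrating by parts on $K_{ij}\nabla_l(dK)_{lij}$ via the cyclic identity $\nabla_l K_{ij}(dK)_{lij}=\tfrac{1}{3}|dK|^2$, with the two $\nabla f$ corrections cancelling against one another.

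The cross terms $(C(K),h)_f$ and $(D(h),K)_f$ must together supply the $h_{ij}(dK)_{iab}H_{jab}$ contribution in (\ref{24}). The first piece is immediate: $(C(K),h)_f=\tfrac{1}{2}\int h_{ij}(dK)_{iab}H_{jab}e^{-f}dV_g$ by the symmetry of $h$ and the antisymmetry of $H$. Extracting the matching half from $(D(h),K)_f$ is the hard part of the argument. I would integrate by parts on each of the three derivative terms in $D(h)$ so that the $\nabla h$ factors become $\nabla K$ or $\nabla H$ factors; the $\nabla K$ pieces then reassemble into $(dK)_{iab}$ via the cyclic formula for the exterior derivative of a 2-form, while the $\nabla H$ pieces combine against the explicit $h_{ab}\nabla_a H_{bij}$ term in $D(h)$ through $dH=0$. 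The $\nabla f$ corrections arising from differentiating the weight $e^{-f}$ must cancel against the explicit $h_{ab}H_{bij}\nabla_a f$ term in $D(h)$, and any residual curvature or derivative terms need to vanish after invoking the soliton identities $\Rc-\tfrac{1}{4}H^2+\nabla^2 f=0$ and $d^*H+i_{\nabla f}H=0$. This bookkeeping is the main obstacle and closely parallels the delicate cancellations already carried out in the proof of Theorem~\ref{T7}.

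Once the quadratic form identity is established, self-adjointness of $\mathcal{N}$ follows block by block. The operator $\tfrac{1}{2}\triangle_f$ on functions is self-adjoint with respect to $e^{-f}dV_g$ by standard integration by parts, and $\divg^*_f\divg_f$ on symmetric 2-tensors is manifestly self-adjoint by construction. The curvature term $\mathring{R}$ is symmetric via $R_{ijkl}=R_{klij}$, and the remaining zero-order $H$-terms in $A$ are symmetric bilinear forms on $\Gamma(S^2M)$ by the skew-symmetry of $H$ and index relabeling. A polarized version of the $(B(K),K)_f$ computation gives $(B(K),K')_f=-\tfrac{1}{6}\int(dK)_{lij}(dK')_{lij}e^{-f}dV_g$, which is symmetric in $K,K'$. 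Finally, to see that $C$ and $D$ are formal adjoints, I would observe that $(C(K),h)_f$ was computed above to equal $\tfrac{1}{2}\int h_{ij}(dK)_{iab}H_{jab}e^{-f}dV_g$, and the quadratic form identity forces $(D(h),K)_f$ to equal the same expression; combining these gives $(C(K),h)_f=(K,D(h))_f$, which is precisely the adjoint relation required for the full self-adjointness of $\mathcal{N}$.
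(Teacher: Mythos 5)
Your proposal is correct and follows essentially the same route as the paper: the quadratic form identity is obtained by matching the blocks of $\mathcal{N}$ with the integrands already produced in the proof of Theorem \ref{T7} (the $(D(h),K)_f$ computation you outline is precisely the $-\tfrac12\int K_{ij}\,\delta(d^*H+i_{\nabla f}H)_{ij}$ calculation carried out there), and self-adjointness reduces to the symmetry of the cross terms, which the paper likewise extracts from that same computation. Your derivation of $(C(K),h)_f=(K,D(h))_f$ from the direct evaluation of $(C(K),h)_f$ together with the quadratic form identity is a valid (bilinear, so no polarization is needed) reformulation of the cross-term symmetry stated in the paper's proof.
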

\begin{proof}
It is clear that the second variation formula is given by the operator $\mathcal{N}$. Following the proof of second variation formula, we see that
\begin{align*}
    \int_M \Big(\langle C(K_1),h_2 \rangle+\langle K_2,D(h_1) \rangle\Big) e^{-f}dV_g=\int_M \Big(\langle C(K_2),h_1 \rangle+\langle K_1,D(h_2) \rangle\Big) e^{-f}dV_g \quad \text{for any $(h_1,K_1)$, $(h_2,K_2)$}.
\end{align*}
Thus, $\mathcal{N}$ is self-adjoint.
\end{proof}

\section{Linear Stability}

\subsection{Introduction}

Before we move to our case, let us consider the Einstein--Hilbert functional on a Riemannian manifold $(M,g)$ 
\begin{align*}
    S(g)=\int_M R dV_g.
\end{align*}
Most of results regarding functional $S$ are written in a series papers by Koiso \cite{MR504300,MR539597,MR539596,MR558319,MR676241,MR707349}. Let us briefly review it.

Recall that on an Einstein manifold $(M,g)$ with constant $\mu$, the volume fixing second variation of $S$ at direction $h$ is given by
\begin{align*}
    S^{''}_g(h)=\int_M \Big\langle h, \frac{1}{2}\triangle h+\divg^* \divg h+(\divg\divg h)g+\frac{1}{2}\triangle(\tr_gh)g-\frac{\mu}{2}(\tr_gh)g+\mathring{R}(h)\Big\rangle dV_g.
\end{align*}
Note that we can decompose our variational space as
\begin{align*}
    \Gamma(S^2M)=(C^\infty(M)\cdot g+\im\divg^*)\oplus (TT_g)
\end{align*}
where $TT_g=\tr^{-1}_g(0)\cap\ker\divg$. Due to the fact that the functional $S$ is diffeomorphism invariant, by using Obata's theorem (see \cite{B} for more details), it suffices to consider the second variation at any direction $h\in TT_g$. Then, we see that
an Einstein manifold $(M,g)$ is linearly stable with respect to the functional $S$ if $\triangle_E|_{TT_g}$ is nonpositive where $\triangle_E=\triangle+2\mathring{R}: \Gamma(S^2M)\longrightarrow  \Gamma(S^2M)$.

It is well-known that under some curvature conditions the Einstein manifolds are stable (See \cite{B,Kroencke2014}). In this subsection, we try to use a similar idea to derive some stable results for the general Einstein--Hilbert functional. First, let us define linear stability.
\begin{defn}\label{D14}
Let $(M,g,H,f)$ be a steady gradient generalized Ricci soliton on a smooth manifold $M$. We say that $(M,g,H,f)$ is \emph{linearly stable} if 
\begin{align*}
    \frac{d^2}{dt^2}\lambda(h,K)\leq 0 \quad \text{ for all variation $(h,K)$ at $(g,H,f)$}.
\end{align*}
\end{defn}

In this section, without further notice, we always assume $(M,g,H,f)$ to be a steady gradient generalized Ricci soliton. By choosing a background 3-form $H$ properly, we consider a generalized metric $\mathcal{G}=\mathcal{G}(g,0)$ and define 
\begin{align}
    \mathcal{V}\coloneqq \{(h,K): h=L_Xg \,,\, K=-\omega \, \text{   where $(X,\omega)\in \mathfrak{gdiff}_H$} \}=\im \textbf{A}' \label{27}
\end{align}

\begin{lemma}\label{L8}
Let $\mathcal{N}$ be the second variation operator on a steady gradient generalized Ricci soliton $(M,g,H,f)$ defined in \emph{(\ref{26})}.   Then, 
\begin{align*}
    \Big(\mathcal{N}(h,K,v_{h,K}), \cdot \Big)_{f}=0 \quad \text{for all $(h,K)\in\mathcal{V}$}.
\end{align*}
\end{lemma}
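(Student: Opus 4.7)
The strategy is to exploit the gauge invariance of $\lambda$ under the $\GDiff_H$-action. First I would identify $\mathcal{V}$ as the infinitesimal orbit of $\GDiff_H$ at the generalized metric $\mathcal{G}(g,0)$: for $(h,K)\in\mathcal{V}$ write $(h,K)=(L_Xg,-\omega)$ with $(X,\omega)\in\mathfrak{gdiff}_H$, then integrate to a one-parameter family $\Phi_t=(\varphi_t,B_t)\in\GDiff_H$. Acting on $(g,0)$ via $\rho_{\mathcal{GM}}$ produces the curve $(g_t,b_t)=(\varphi_t^{*}g,-B_t)$ whose associated closed $3$-form, by Remark \ref{R1} together with the defining relation $\varphi_t^{*}H_0=H_0-dB_t$, is $H_t=\varphi_t^{*}H$.

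Next I would invoke the $\GDiff_H$-invariance of $\lambda$: the integrand of $\mathcal{F}$ and the normalization constraint $\int e^{-f}dV_g=1$ are both diffeomorphism-covariant, hence $\mathcal{F}(\varphi_t^{*}g,\varphi_t^{*}H,f\circ\varphi_t)=\mathcal{F}(g,H,f)$ and minimality is preserved. Consequently each $(g_t,H_t,f_t)$ with $f_t:=f\circ\varphi_t$ is itself a steady gradient generalized Ricci soliton, so the soliton equations \eqref{15} hold identically along the curve:
\[
\Rc_{g_t}-\tfrac{1}{4}H_t^2+\nabla^2 f_t\equiv 0,\qquad d^{*}_{g_t}H_t+i_{\nabla f_t}H_t\equiv 0.
\]

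Now I pick an arbitrary test pair $(h',K')\in\Gamma(S^2M)\times\Omega^2$ and apply the first variation formula \eqref{23} at each $(g_t,H_t,f_t)$:
\[
D\lambda|_{(g_t,H_t)}(h',K')=\int_M\Big[\langle-\Rc_{g_t}+\tfrac{1}{4}H_t^2-\nabla^2 f_t,\,h'\rangle-\tfrac{1}{2}\langle d^{*}_{g_t}H_t+i_{\nabla f_t}H_t,\,K'\rangle\Big]e^{-f_t}dV_{g_t}\equiv 0.
\]
Differentiating in $t$ at $t=0$ and using that the first variation vanishes at the critical point $(g,H,f)$, the resulting $t$-derivative equals the Hessian pairing $D^{2}\lambda\big((h,K),(h',K')\big)$, which by Corollary \ref{C5} is $\big(\mathcal{N}(h,K,v_{h,K}),(h',K',v_{h',K'})\big)_{f}$. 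Since $(h',K')$ is arbitrary this gives the claim.

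The bookkeeping point to verify is that the induced variation of the minimizer along the gauge orbit is $\partial_t f_t|_{t=0}=Xf$, so that $v_{h,K}=\tr_g h-2Xf=2\divg_f X$ is the correct, unique-up-to-constant solution of the defining equation of Lemma \ref{L7}; this is automatic from the $\GDiff_H$-equivariance of the $\mathcal{F}$-minimization. The main potential obstacle is not analytic but the careful identification between the three-factor space $\Gamma(S^2M)\times\Omega^{2}\times C^\infty(M)$ on which $\mathcal{N}$ acts and the two-factor variation space for $\lambda$. Once this identification is made, no nontrivial estimate is needed—everything is forced by gauge invariance and the first-variation formula.
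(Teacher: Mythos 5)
Your proof is correct and uses the same core idea as the paper: integrate $(h,K)\in\mathcal{V}$ to a curve in the $\GDiff_H$-orbit of $(g,0)$, note $H_t=\varphi_t^*H$, and invoke the diffeomorphism invariance of $\lambda$. In fact your version is slightly more complete: the paper only observes that $\lambda(g_t,H_t)$ is constant along the orbit, which by itself gives the diagonal vanishing $\big(\mathcal{N}(h,K,v_{h,K}),(h,K,v_{h,K})\big)_f=0$, whereas your step of differentiating the first-variation formula along the orbit of critical points (every point of which is again a steady gradient generalized Ricci soliton) correctly yields the full polarized statement $\big(\mathcal{N}(h,K,v_{h,K}),\cdot\big)_f=0$ against arbitrary test directions.
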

\begin{proof}
Suppose $(h,K)\in\mathcal{V}$, there exists a family generalized diffeomorphisms $(f_t,B_t)$ and generalized metrics
\begin{align*}
    \mathcal{G}_t=\mathcal{G}_t(g_t,b_t)
\end{align*}
such that 
\begin{align*}
    \frac{\partial }{\partial t}\Big|_{t=0}\mathcal{G}_t=(h,K)\in\mathcal{V} \quad \text{   where   }(g_t,b_t)=(f_t^* g,-B_t).
\end{align*}
Then,
\begin{align*}
    H_t=H+d(-B_t)=f_t^*H \quad  \text{ (since  } (f_t,B_t)\in \GDiff).
\end{align*}
Recall that $\lambda$ is diffeomorphism invariant, which implies $\lambda(g_t,H_t)=\lambda(g_0,H_0)$ for all $t$, so
\begin{align*}
         \Big(\mathcal{N}(h,K,v_{h,K}), \cdot \Big)_{f}=0 \quad \text{for all $(h,K)\in\mathcal{V}$}.
\end{align*}
\end{proof}

\subsection{Gradient Ricci Soliton Case}

Now, we consider the case when $(g,H)$ is a gradient steady Ricci soliton, i.e., $H\equiv 0$. Moreover, any compact gradient Ricci steady soliton is Ricci flat and $f=0$. Thus, our second variation formula (\ref{24}) becomes 
\begin{align*}
     \nonumber\frac{d^2}{dt^2}\Big|_{t=0}\lambda&=\int_M \Big\langle \frac{1}{2}\triangle h+(\mathring{R} h)+\divg^* \divg h,h \Big\rangle dV_g
       -\frac{1}{6}\int_M |dK|^2 dV_g-\int_M \frac{1}{2}|\nabla v_{h,K}|^2dV_g.
\end{align*} 

\begin{proposition}\label{T8}
Suppose $(M,g)$ is a compact Ricci flat manifold. $(M,g)$ is linearly stable with respect to the generalized Einstein--Hilbert functional if and only if $\triangle_L\leq 0$ on $\Gamma(S^2M)$ where $\triangle_L$ is the Lichnerwoicz Laplacian.
\end{proposition}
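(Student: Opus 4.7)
The plan is as follows. Since every compact gradient steady Ricci soliton with $H\equiv 0$ is Ricci flat with constant potential, I may take $f=0$, which reduces \eqref{24} to
\begin{align*}
    \frac{d^2}{dt^2}\Big|_{t=0}\lambda = \int_M \Big\langle \tfrac{1}{2}\triangle h+\mathring{R}h+\divg^*\divg h,h\Big\rangle dV_g - \tfrac{1}{6}\int_M |dK|^2 dV_g - \tfrac{1}{2}\int_M|\nabla v_h|^2 dV_g,
\end{align*}
where $\triangle v_h = \divg\divg h$ because the cross term $\langle dK,H\rangle$ vanishes. The $K$-variation thus decouples from $h$ and contributes only the manifestly non-positive term $-\tfrac{1}{6}\int|dK|^2$, so linear stability is governed entirely by the sign of the $h$-quadratic form $Q(h)$.

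Next, I would apply \Cref{L8}: with $H=0$ the subspace $\mathcal{V}$ contains every Lie-derivative direction $h=L_X g$, and $\mathcal{N}$ vanishes on $\mathcal{V}$. Self-adjointness (\Cref{C5}) then implies that $Q$ depends only on the $\ker\divg$-component of $h$ under the Berger--Ebin $L^2$-orthogonal decomposition $\Gamma(S^2M)=\im\divg^*\oplus\ker\divg$, so it suffices to analyze $Q$ on divergence-free variations.

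For $h$ with $\divg h = 0$, both $\divg^*\divg h=0$ and $\triangle v_h=0$, forcing $v_h$ to be constant on the compact manifold and hence $\nabla v_h=0$. Using the Ricci-flat identity $\triangle_L h = \triangle h + 2\mathring{R}h$, the form reduces to
\begin{align*}
    Q(h) = \tfrac{1}{2}\int_M \langle\triangle_L h, h\rangle dV_g,
\end{align*}
so linear stability is equivalent to $\triangle_L\leq 0$ on $\ker\divg$.

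To upgrade this to $\triangle_L\leq 0$ on all of $\Gamma(S^2M)$, I would verify that on a compact Ricci flat manifold the Berger--Ebin decomposition is $\triangle_L$-invariant and that $\triangle_L$ is non-positive on $\im\divg^*$, by identifying $\triangle_L\circ\divg^*$ with $\divg^*$ composed with (minus) the Hodge Laplacian on $1$-forms. I expect this last step to be the main technical hurdle: the commutator $[\divg,\triangle_L]$ generically produces curvature terms that collapse precisely when $\Rc=0$, and the signs must be tracked carefully via a Bochner computation. Once this invariance and sign are in place, $\triangle_L\leq 0$ on $\ker\divg$ becomes equivalent to $\triangle_L\leq 0$ on $\Gamma(S^2M)$, completing the equivalence.
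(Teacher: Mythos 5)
Your proposal is correct and follows essentially the same route as the paper: both reduce via \Cref{L8} (and self-adjointness of $\mathcal{N}$) to $(h,K)\in\mathcal{V}^\perp$, where $\divg h=0$ forces $\divg^*\divg h=0$ and $v_{h,K}$ constant, leaving $\tfrac12\int\langle\triangle_L h,h\rangle-\tfrac16\int|dK|^2$. The only addition is your final step passing from $\ker\divg$ to all of $\Gamma(S^2M)$, which the paper omits entirely; note that the commutation $\triangle_L\circ\divg^*=\divg^*\circ\triangle_{\mathrm{Hodge}}$ on one-forms is a standard identity valid for any metric, and only the Bochner identification $\triangle_{\mathrm{Hodge}}=-\nabla^*\nabla\leq 0$ uses $\Rc=0$, so the step you flag as the main technical hurdle is in fact routine.
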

\begin{proof}
By \Cref{L8}, it suffices to check that 
\begin{align*}
    \frac{d^2}{dt^2}\lambda(h,K)\leq 0 \quad \text{  $(h,K)\in \mathcal{V}^{\perp}$ at $(g,0)$}
\end{align*}
where $\mathcal{V}$ is defined in (\ref{27}). In this case, $H=0$, so (\ref{11}) implies that 
\begin{align*}
     \mathcal{V}^{\perp}=\ker (\textbf{A}_0')^*=\{(h,K): \divg h=0,\, K=d^*\beta, \,\beta\in\Omega^3\} \text{    and then   } v_{h,K}=\text{constant.}
\end{align*}
Finally, we compute
\begin{align*}
     \frac{d^2}{dt^2}\lambda(h,K)&=\int_M \Big\langle \frac{1}{2}\triangle h+\mathring{R}(h),h \Big\rangle-\frac{1}{6}|dK|^2 dV_g=\int_M \Big\langle \frac{1}{2}\triangle_L h,h \Big\rangle-\frac{1}{6}|dK|^2 dV_g.
\end{align*}
Thus, we complete the proof.
\end{proof}

When $(M,g)$ is Ricci flat, the Lichnerowicz Laplacian $\triangle_L$ is the same as the Einstein operator $\triangle_E$. Therefore, we have the same stability conditions and examples as the Einstein--Hilbert functional case (see \cite{B,2003math.....11253D,Kroencke2014,K1} for more details).

\subsection{General Einstein Cases}
In this subsection, we will consider a compact generalized Einstein manifold $(M^n,g,H)$ 
\begin{align*}
    \Rc-\frac{1}{4}H^2=0, \quad d^*H=0.
\end{align*}
Our second variation formula (\ref{24}) becomes
\begin{align}
     \nonumber\frac{d^2}{dt^2}\Big|_{t=0}\lambda&=\int_M \langle \frac{1}{2}\triangle_L h+\divg^* \divg h,h \rangle-\frac{1}{2}h_{ij}h_{ac}H_{iab}H_{jcb}+h_{ij}(dK)_{iab} H_{jab} dV_g
      \nonumber\\& \kern2em-\frac{1}{6}\int_M |dK|^2 dV_g-\int_M \frac{1}{2}|\nabla v_{h,K}|^2dV_g. \label{28}
\end{align} 

Before we start discussing our results, let us introduce some motivations here. Suppose $G$ is a compact Lie group, we know that $G$ possesses a bi-invariant metric $\langle\cdot ,\cdot\rangle$, and its corresponding connection, Riemann curvatures, sectional curvatures are given by
\begin{align*}
    \nabla_XY&=\frac{1}{2}[X,Y],  
    \\ R(X,Y)Z&=-\frac{1}{4}[[X,Y],Z],
    \\ K(X,Y)&=\frac{1}{4}\langle [X,Y],[X,Y] \rangle \quad \text{where $X,Y,Z$ are left-invariant vector field.}
\end{align*}

Following \cite{JM}, we choose an orthonormal basis $\{e_i\}$ for the left-invariant vector fields and we define the structure constants by
\begin{align*}
    \alpha_{ijk}=\langle[e_i,e_j],e_k \rangle.
\end{align*}
Recall that $\langle\cdot,\cdot\rangle$ is a bi-invariant metric, so
\begin{align*}
   \alpha_{ijk}=\langle[e_i,e_j],e_k \rangle=\langle e_i,[e_j,e_k]\rangle=\alpha_{jki}=\alpha_{kij}.
\end{align*}
Using the structure constants, the sectional curvatures are
\begin{align*}
    K_{ij}=\frac{1}{4}\sum_{m=1}^n \alpha^2_{ijm}\geq 0 
\end{align*}
and the Ricci curvatures are
\begin{align*}
    R_{kl}=\sum_{i=1}^n \langle R(e_i,e_k)e_l,e_i \rangle=\frac{1}{4}\sum_{i=1}^n \langle [e_i,e_k], [e_i,e_l] \rangle=-\frac{1}{4}\mathcal{B}_{kl} 
\end{align*}
where $\mathcal{B}$ is the Killing form. Using Cartan's criterion and the above discussion, we have the following proposition.

\begin{proposition}\label{P5}
A connected Lie group is compact and semisimple if and only if its Killing form is negative definite. Moreover, any compact connected semisimple Lie group, equipped with $-\mathcal{B}$ as a metric, is an Einstein manifold.
\end{proposition}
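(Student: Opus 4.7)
The plan is to split the biconditional into two implications and treat the Einstein claim as a direct consequence of the structure-constant identity $R_{kl}=-\tfrac14\mathcal{B}_{kl}$ already derived in the excerpt. Both directions rely on combining a bi-invariant metric with Cartan's criterion for semisimplicity, i.e., the fact that a Lie algebra is semisimple if and only if its Killing form is non-degenerate.

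For the forward direction (compact semisimple implies $\mathcal{B}$ negative definite), I would start by averaging any background metric against Haar measure to produce a bi-invariant Riemannian metric $\langle\cdot,\cdot\rangle$ on $G$. With respect to any such metric, $\mathrm{Ad}$-invariance forces each operator $\mathrm{ad}(X)$ to be skew-symmetric on $\mathfrak{g}$, so
\begin{align*}
\mathcal{B}(X,X)=\mathrm{tr}(\mathrm{ad}(X)^2)=-\mathrm{tr}\bigl(\mathrm{ad}(X)^{\top}\mathrm{ad}(X)\bigr)\le 0,
\end{align*}
which shows $\mathcal{B}$ is negative semidefinite. Semisimplicity of $\mathfrak{g}$ then supplies non-degeneracy of $\mathcal{B}$ via Cartan's criterion, and the combination upgrades negative semidefiniteness to strict negative definiteness.

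Conversely, if $\mathcal{B}$ is negative definite then it is in particular non-degenerate, so $\mathfrak{g}$ is semisimple by Cartan's criterion. The form $g:=-\mathcal{B}$ is then a positive-definite $\mathrm{Ad}$-invariant inner product on $\mathfrak{g}$, which extends by left translation to a bi-invariant Riemannian metric on $G$. Choosing an orthonormal frame with respect to $g$, the identity $R_{kl}=-\tfrac14\mathcal{B}_{kl}$ recorded just above the statement reads $R_{kl}=\tfrac14\delta_{kl}$, i.e.\ $\Rc=\tfrac14 g$. This simultaneously yields the Einstein conclusion with Einstein constant $\tfrac14$, and, combined with the fact that bi-invariant metrics on connected Lie groups are geodesically complete (one-parameter subgroups being geodesics), Myers' theorem delivers compactness of $G$. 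The one subtlety worth flagging is precisely this last step: one must know completeness before invoking Myers, but this is automatic from bi-invariance, so no extra analytic input is needed. Beyond this the proof involves no computation beyond substituting $g=-\mathcal{B}$ into the structure-constant formula already present in the excerpt.
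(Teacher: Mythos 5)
Your proposal is correct and follows essentially the same route the paper sketches: the bi-invariant metric together with Cartan's criterion and the identity $\Rc=-\tfrac14\mathcal{B}$ computed just before the statement. The only ingredient you make explicit that the paper leaves implicit is the completeness-plus-Myers argument for compactness in the converse direction, which is the standard way (as in Milnor) to close that implication, so there is no substantive difference in approach.
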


Moreover, we can define 3-form $H$ by $g^{-1}H(X,Y)=[X,Y]$ and deduce that

\begin{corollary}[\cite{GRF} Proposition 3.53]\label{C7}
A compact semisimple Lie group $G$ admits a Bismut-flat, Einstein metric $(g,H)$.
\end{corollary}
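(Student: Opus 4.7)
The plan is to take $g = -\mathcal{B}$ (the bi-invariant metric from Proposition~\ref{P5}, which is automatically Einstein) together with the Cartan $3$-form $H(X,Y,Z) := g([X,Y],Z)$, defined on left-invariant vector fields and extended by tensoriality. The key point is that this pair is Bismut-flat, and then Proposition~\ref{P4} converts flatness into the generalized-Einstein identities for free.

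First I would verify the structural properties of $H$. Bi-invariance of $g$ gives $g([X,Y],Z) = -g(Y,[X,Z])$, which together with antisymmetry of the bracket shows $H$ is totally skew. Closedness $dH = 0$ then follows from the Jacobi identity: on four left-invariant fields, the Koszul formula for $dH$ reduces to a cyclic sum of double brackets that vanishes.

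Next I would show Bismut-flatness by a direct frame computation. The Levi-Civita connection of a bi-invariant metric satisfies $\nabla_X Y = \tfrac{1}{2}[X,Y]$ on left-invariant fields, as already noted in the excerpt, and by construction $g^{-1}H(X,Y,\cdot) = [X,Y]$. Definition~\ref{D9} then yields
\begin{align*}
\nabla^-_X Y \;=\; \nabla_X Y - \tfrac{1}{2}[X,Y] \;=\; 0.
\end{align*}
Since left-invariant vector fields form a global parallelization and are all $\nabla^-$-parallel, tensoriality of curvature forces $R^- \equiv 0$ on $M$.

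Finally, I would conclude using Proposition~\ref{P4}. Bismut-flatness gives $\Rc^- = 0$; comparing with the identity $\Rc^- = \Rc - \tfrac{1}{4}H^2 + \tfrac{1}{2}d^*H$ and separating the symmetric and antisymmetric parts produces both $\Rc = \tfrac{1}{4}H^2$ and $d^*H = 0$, i.e.\ the generalized Einstein conditions~(\ref{16}); combined with $\Rc = \mu g$ from Proposition~\ref{P5} this shows $(g,H)$ is a Bismut-flat, Einstein pair. The main obstacle I foresee is purely cosmetic: I must check that the sign convention in Definition~\ref{D9} matches the sign in the definition of $H$ so that $\nabla^-$ (and not $\nabla^+$) is the flat connection. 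If the bookkeeping comes out reversed, one simply replaces $H$ by $-H$; apart from this sign check, everything rests on the Jacobi identity and the standard Koszul computation on a compact group.
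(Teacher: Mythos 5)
Your construction is exactly the one the paper (following \cite{GRF}, Proposition 3.53, and the preceding discussion of bi-invariant metrics) has in mind: take $g=-\mathcal{B}$ and the Cartan $3$-form $H(X,Y,Z)=g([X,Y],Z)$, observe that $\nabla^{\mp}_XY=\tfrac12[X,Y]\mp\tfrac12[X,Y]$ kills one Bismut connection on the left-invariant parallelization, and read off $\Rc=\tfrac14H^2$, $d^*H=0$ from \Cref{P4}. The details you supply (total skew-symmetry and closedness of $H$ via ad-invariance and Jacobi, and the sign bookkeeping between $\nabla^+$ and $\nabla^-$) are correct and complete, so this matches the paper's approach.
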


\begin{remark}\label{R8}
In \cite{AF}, the authors also proved that for any simply connected $(M,g,H)$ with flat Bismut connection, $(M,g)$ is isometric to a product of simple Lie group with bi-invariant metric $g$ and $g^{-1}H(X,Y)=\pm[X,Y]$ for any left-invariant vector fields $X,Y$ (also see \cite{GRF} Theorem 3.54).
\end{remark}

Now we start to prove our result. Recall that our trivial subspace is
\begin{align*}
    \mathcal{V}=\{(h,K): h=L_Xg \,,\, K=-\omega \, \text{   where $(X,\omega)\in \mathfrak{gdiff}_H$} \}
\end{align*}
and then we define  

\begin{align}
    \mathcal{V}_1=\begin{cases}\{(ug,K):u\in C^\infty(M), \, K\in \Omega^2\} \quad &\text{  when} \quad n\geq 4 
    \\\{(ug, -d^*(\omega dV_g) ):u,\omega\in C^\infty(M)\} \quad &\text{  when} \quad n=3 .
    \end{cases}\label{29}
\end{align}

\begin{lemma}\label{L10}
Define $\mathcal{V}$ and $\mathcal{V}_1$ as above, then 
\begin{align}
    \mathcal{V}_1^\perp=\begin{cases}\{(h,0): \tr_gh=0\} \quad &\text{  when} \quad n\geq 4
    \\\{(h, K):\tr_gh=0, \, dK=0\} \quad &\text{  when} \quad n=3
    \end{cases}\label{30}
\end{align}
and 
\begin{align}
    \mathcal{V}^\perp\cap \mathcal{V}_1^\perp=\{(h,0):\tr_gh=0,\kern0.5em\divg h=0\}\label{31}
\end{align}
with respect to the inner product \emph{(\ref{25})}.
\end{lemma}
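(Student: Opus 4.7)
The plan is to test the orthogonality conditions directly against the explicit generators of $\mathcal{V}$ and $\mathcal{V}_1$, combining a few elementary duality identities with the Hodge decomposition. Since $(M^n,g,H)$ is a generalized Einstein manifold, the soliton equations $\Rc-\frac{1}{4}H^2+\nabla^2 f=0$ together with $\Rc=\frac{1}{4}H^2$ force $\nabla^2 f=0$, so $f$ is constant on the compact $M$; hence the weight $e^{-f}$ in the inner product (\ref{25}) is a constant and can be ignored for the purpose of computing orthogonal complements.

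For $\mathcal{V}_1^\perp$ I would pair $(h,K')$ against $(ug,K)$ with $u$ and $K$ arbitrary. In the case $n\geq 4$, choosing $u\equiv 0$ and letting $K$ range over $\Omega^2$ immediately gives $K'=0$, while choosing $K\equiv 0$ and letting $u$ vary yields $\int u\,\tr_g h\, dV_g=0$ for all $u$, hence $\tr_g h=0$. In the case $n=3$ the second component of a generator has the shape $-d^*(\omega\, dV_g)$; the adjoint identity
\begin{align*}
\int_M \langle K,\,-d^*(\omega\, dV_g)\rangle\, dV_g \;=\; -\int_M \langle dK,\,\omega\, dV_g\rangle\, dV_g,
\end{align*}
combined with the fact that every top form on $M^3$ is of the form $\omega\, dV_g$, reduces the orthogonality condition to $dK=0$. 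The trace condition on $h$ is obtained as before, producing the stated description of $\mathcal{V}_1^\perp$.

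For $\mathcal{V}^\perp\cap\mathcal{V}_1^\perp$ I would invoke the explicit description of $\mathcal{V}^\perp=\ker(\textbf{A}'_0)^*$ recorded in \Cref{R3}, namely $(h,K)\in\mathcal{V}^\perp$ iff $(\divg h)_l=\tfrac12 K^{ij}H_{ijl}$ and $K=d^*\beta$ for some $\beta\in\Omega^3$. In dimension $n\geq 4$ the $\mathcal{V}_1^\perp$ condition already forces $K=0$, whereupon the divergence constraint collapses to $\divg h=0$. In dimension $n=3$ the $\mathcal{V}_1^\perp$ condition gives $dK=0$ while the $\mathcal{V}^\perp$ condition gives $K=d^*\beta$; since the spaces of closed forms and of coexact forms are $L^2$-orthogonal pieces of the Hodge decomposition on the compact manifold, their intersection is trivial, so $K=0$, and then the divergence equation yields $\divg h=0$ on the nose.

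The Hodge-theoretic vanishing of $K$ in the three-dimensional case is the only non-routine step. Everything else is a sequence of variational arguments against arbitrary test functions and forms, so the full statement drops out once the correct adjoint identities are written down and the constancy of $f$ is noted.
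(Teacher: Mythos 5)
Your proposal is correct and follows essentially the same route as the paper: test orthogonality against the explicit generators of $\mathcal{V}_1$ (writing the top-degree pairing via $dK=\psi\, dV_g$ in dimension three) and then intersect with the description of $\mathcal{V}^\perp=\ker(\textbf{A}'_0)^*$ from \Cref{R3}. The only place you add detail beyond the paper's "it is easy to see" is the Hodge-theoretic observation that a closed, coexact $2$-form vanishes, which is exactly the step needed to conclude $K=0$ when $n=3$.
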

\begin{proof}
When $n\geq 4$, it is clear that $ \mathcal{V}_1^\perp=\{(h,0):\tr_gh=0\}$. For $n=3$ case, we can write
\begin{align*}
    dK=\psi dV_g \quad \text{ where }\psi\in C^\infty(M).
\end{align*}
Then,
\begin{align*}
   (h,K)\in \mathcal{V}_1^\perp&\Longleftrightarrow \int_M \langle h,ug \rangle+\langle K, -d^*(\omega dV_g) \rangle dV_g=0   
   \\&\Longleftrightarrow \int_M u \tr_gh- \psi\omega dV_g=0 \text{  for all $u,\omega\in C^\infty(M)$}.
\end{align*}
Thus, $\tr_gh=0$ and $\psi=0$.  Finally, we recall that
\begin{align*}
   \mathcal{V}^\perp= \ker (\textbf{A}'_0)^*=\Big\{(h,K)\in \Gamma(S^2M)\times\Omega^2:(\divg h)_l=\frac{1}{2}K^{ij}H_{ijl},\, K=d^*\beta \Big\}.
\end{align*}
so it is easy to see that $\mathcal{V}^\perp\cap \mathcal{V}_1^\perp=\{(h,0):\tr_gh=0,\divg h=0\}$.
\end{proof}

In the following, we will consider the decomposition 
\begin{align}
   \Gamma(S^2M)\times \Omega^2=(\mathcal{V}+\mathcal{V}_1)\oplus(\mathcal{V}^\perp\cap \mathcal{V}_1^\perp) \label{32}
\end{align}
and check that the second variation is non-positive on each subspace under some conditions.
\begin{remark}\label{R10}
If $(M,g)$ is an Einstein manifold other than the standard sphere. The above decomposition should be 
\begin{align*}
    \Gamma(S^2M)\times \Omega^2=(\mathcal{V}\oplus\mathcal{V}_1)\oplus(\mathcal{V}^\perp\cap \mathcal{V}_1^\perp)
\end{align*}
since Obata's theorem implies that there is no nontrivial vector field $X$ such that $L_Xg$ is conformal. 
\end{remark}

\begin{lemma}\label{L11}
Let $(M,g,H)$ be a compact generalized Einstein metric. Then $\frac{d^2}{dt^2}\lambda(\mathcal{V}_1)\leq 0$ at $(M,g,H)$ where $\mathcal{V}_1$ is defined in \emph{(\ref{29}).}
\end{lemma}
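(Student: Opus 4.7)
The plan is to substitute $h = ug$ into the second variation formula \eqref{28} and reduce the problem to a scalar inequality that can be verified mode-by-mode in the $-\triangle$ eigenbasis. For $h = ug$ one computes $\triangle_L(ug) = (\triangle u)g$, $\divg(ug) = du$, and $\divg^*(du) = -\nabla^2 u$, so $\int_M \langle \tfrac12\triangle_L h + \divg^*\divg h, h\rangle\,dV_g = -\tfrac{n-2}{2}\int_M|\nabla u|^2$. The pure-$H$ piece becomes $-\tfrac12 h_{ij}h_{ac}H_{iab}H_{jcb} = -2Ru^2$ using $\tr H^2 = |H|^2 = 4R$ from $\Rc = \tfrac14 H^2$, and Lemma \ref{L3} ensures $R$ is a non-negative constant; the cross term reduces to $u\langle dK, H\rangle$.

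For the constraint equation, I would set $w := v_{h,K} - u$ so that $\triangle w = -\tfrac16\langle dK, H\rangle$, which is solvable because $d^*H = 0$ gives $\int_M \langle dK,H\rangle = \int_M \langle K, d^*H\rangle = 0$. Expanding $|\nabla v_{h,K}|^2$ and integrating by parts with $\int \nabla u\cdot \nabla w = \tfrac16\int u\langle dK,H\rangle$, the second variation collapses to
\begin{align*}
\tfrac{d^2}{dt^2}\Big|_{t=0}\lambda = -\tfrac{n-1}{2}\int_M|\nabla u|^2 - 2R\int_M u^2 + \tfrac56\int_M u\langle dK, H\rangle - \tfrac16\int_M|dK|^2 - \tfrac12\int_M|\nabla w|^2.
\end{align*}
The pointwise Cauchy--Schwarz $\langle dK,H\rangle^2 \leq |dK|^2|H|^2 = 4R|dK|^2$ strengthens $-\tfrac16\int|dK|^2$ to $-\tfrac{1}{24R}\int\langle dK,H\rangle^2$. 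Expanding $u = u_0 + \sum_k a_k\phi_k$ and $w = \sum_k b_k\phi_k$ in the $-\triangle$ eigenbasis with eigenvalues $0 < \lambda_1\leq\lambda_2\leq\cdots$, the $u_0$ mode contributes $-2Ru_0^2\,\mathrm{Vol}(M) \leq 0$, and each $k \geq 1$ mode produces a quadratic form in $(a_k, b_k)$ whose non-positivity is equivalent to
\begin{align*}
(14 - n)\lambda_k \leq \tfrac{3(n-1)\lambda_k^2}{R} + 4R.
\end{align*}

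For $n \geq 4$ this follows from AM--GM, which bounds the right-hand side below by $4\sqrt{3(n-1)}\lambda_k$, together with the elementary check that $14 - n \leq 4\sqrt{3(n-1)}$ for every $n \geq 4$. The main obstacle is $n = 3$, where AM--GM barely fails and the inequality is tight on the first two nonzero eigenspaces; here Corollary \ref{C3} identifies $(M,g)$ with a quotient of $S^3$ carrying $H = c\,dV_g$ and $\Rc = \tfrac{c^2}{2}g$, and the discriminant condition factors as $4\lambda_k^2 - 11c^2\lambda_k + 6c^4 \geq 0$, which holds precisely for $\lambda_k \leq \tfrac{3c^2}{4}$ or $\lambda_k \geq 2c^2$; the eigenvalues of $-\triangle$ on $S^3/\Gamma$ lie in $\{0, \tfrac{3c^2}{4}, 2c^2, \tfrac{15c^2}{4}, \ldots\}$ and so all fall in the allowed region. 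Finally, the restriction $K = -d^*(\omega\,dV_g)$ built into the definition of $\mathcal{V}_1$ for $n = 3$ is no loss of generality because the second variation depends on $K$ only through $dK$, and every 3-form on a 3-manifold is a multiple of $dV_g$.
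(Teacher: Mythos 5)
Your proposal is correct, and its skeleton matches the paper's: substitute $h=ug$ into (\ref{28}), use $\triangle_L(ug)=(\triangle u)g$ and $|H|^2=4R$, absorb the auxiliary function $v$ by writing $v=u+w$ with $\triangle w=-\tfrac16\langle dK,H\rangle$, and in dimension $3$ run a mode-by-mode discriminant analysis on the round sphere (your condition $4\lambda^2-11c^2\lambda+6c^4\ge 0$ is exactly the paper's $(\mu-3)(\mu-8)\ge0$ after normalizing $c=2$, with equality on the first two nonzero eigenspaces, consistent with \Cref{P7}). Where you genuinely diverge is the case $n\ge4$: the paper avoids any spectral decomposition by completing two squares with hand-picked coefficients, namely $-\tfrac16|dK-\sqrt3\,uH|^2$ and $-\tfrac12|\nabla v-(6-2\sqrt3)\nabla u|^2$, which kills the cross terms exactly and leaves $-(\tfrac{n-2}{2}-18+10\sqrt3)\int|\nabla u|^2\le0$; you instead apply Cauchy--Schwarz to replace $-\tfrac16|dK|^2$ by $-\tfrac{1}{24R}\langle dK,H\rangle^2$ and then verify the single discriminant inequality $(14-n)\lambda_k\le\tfrac{3(n-1)\lambda_k^2}{R}+4R$ via AM--GM. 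Your route is more uniform (the same quadratic form handles $n\ge4$ and $n=3$, and it makes transparent why $n=3$ is the borderline case where AM--GM gives $4\sqrt6<11$), at the cost of requiring the eigenbasis even for $n\ge4$ and of dividing by $R$, which degenerates when $R=0$; that case forces $H\equiv0$ by \Cref{L3}, whereupon the cross term and $w$ vanish and non-positivity is immediate, so you should add that one-line remark. Your closing observation that the constraint $K=-d^*(\omega\,dV_g)$ in (\ref{29}) for $n=3$ loses nothing is also sound, since $dK$ is always a mean-zero multiple of $dV_g$ there.
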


\begin{proof}
Let $n\geq 4$, our second variation formula (\ref{28}) reduces to
\begin{align*}
     \frac{d^2}{dt^2}\lambda(ug, K)=\int_M \frac{n-2}{2}u\triangle u-\frac{u^2}{2}|H|^2+u\langle dK,H\rangle -\frac{1}{6}|dK|^2-\frac{1}{2}|\nabla v|^2 dV_g
\end{align*}
where $v$ is the unique solution such that $\triangle v=\triangle u-\frac{1}{6}\langle dK,H \rangle$.
Because
\begin{align*}
    -\frac{1}{6}|dK-\sqrt{3} uH|^2=-\frac{1}{6}|dK|^2+\frac{\sqrt{3} u}{3}\langle dK,H\rangle-\frac{u^2}{2}|H|^2,
\end{align*}
\begin{align*}
    \int_M -\frac{1}{2}|\nabla v-(6-2\sqrt{3}) \nabla u| dV_g&= \int_M -\frac{1}{2}|\nabla v|+(6-2\sqrt{3})\langle \nabla v,\nabla u\rangle-(24-12\sqrt{3})|\nabla u|^2 dV_g
    \\&= \int_M -\frac{1}{2}|\nabla v|-(6-2\sqrt{3}) u(\triangle u-\frac{1}{6}\langle dK,H\rangle)-(24-12\sqrt{3})|\nabla u|^2 dV_g
    \\&=\int_M -\frac{1}{2}|\nabla v|+(1-\frac{\sqrt{3}}{3})u\langle dK,H\rangle-(18-10\sqrt{3})|\nabla u|^2 dV_g,
\end{align*}
we conclude that
\begin{align*}
    \frac{d^2}{dt^2}\lambda(ug, dK)\leq \int_M -(\frac{n-2}{2}-18+10\sqrt{3})|\nabla u|^2 dV_g \leq 0 \quad \text{if $n\geq 4$.}
\end{align*}

For $n=3$ case, we may suppose $(M,g)$ is a unit sphere and then $H=2dV_g$ by \Cref{C3}. Let $dK=\psi dV_g$ where $\psi\in C^\infty(M)$. The second variation formula (\ref{28}) reduces to
\begin{align}
     \nonumber\frac{d^2}{dt^2}\Big|_{t=0}\lambda&=\int_M \Big\langle \frac{1}{2}\triangle_L h+\divg^* \divg h,h \Big\rangle-2((\tr_gh)^2-|h|^2)+4\psi(\tr_gh)-\psi^2-\frac{1}{2}|\nabla v|^2 dV_g
    \\&=\int_M \Big\langle \frac{1}{2}\triangle h+\divg^* \divg h,h \Big\rangle-((\tr_gh)^2+|h|^2)+4\psi(\tr_gh)-\psi^2-\frac{1}{2}|\nabla v|^2 dV_g.\label{33}
\end{align} 
Here, $v$ is the solution of $\triangle v=\divg\divg h-2\psi$ and we use the fact that 
\begin{align*}
    \langle \triangle_Lh,h \rangle&=\langle \triangle h,h \rangle+2R_{ijkl}h_{il}h_{jk}-2R_{ij}h_{jk}h_{ik}
    \\&=\langle \triangle h,h \rangle+4(h_{11}h_{22}+h_{11}h_{33}+h_{22}h_{33}-h_{12}^2-h_{13}^2-h_{23}^2)-4(h^2_{11}+h^2_{22}+h_{33}^2+2h^2_{12}+2h_{13}^2+2h_{23}^2)
    \\&=\langle \triangle h,h \rangle+2(\tr_gh)^2-6|h|^2.
\end{align*}
For any $(ug,-d^*(\omega dV_g))\in\mathcal{V}_1$,
\begin{align*}
    dK=\triangle\omega dV_g
\end{align*}
so
\begin{align*}
     \frac{d^2}{dt^2}\Big|_{t=0}\lambda= \int_M \frac{u\triangle u}{2}-12u^2+12u\triangle\omega-(\triangle\omega)^2-\frac{1}{2}|\nabla v|^2 dV_g
\end{align*}
where $v$ is the unique solution such that $\triangle v=\triangle u-2\triangle\omega$. We may replace $\nabla v$ by $\nabla u-2\nabla\omega$ to see that
\begin{align*}
     \frac{d^2}{dt^2}\Big|_{t=0}\lambda= \int_M -|\nabla u|^2-12u^2-10\langle \nabla u,\nabla\omega \rangle-(\triangle\omega)^2-2|\nabla \omega|^2 dV_g.
\end{align*}
Let $\mu_k$ be the eigenvalues of the Laplace operator on the unit sphere and $\chi_k$ be its corresponding eigenfunctions.
\begin{align*}
    \triangle \chi_k=-\mu_k\chi_k, \quad \mu_k=k(k+2),\quad k=0,1,2,... .
\end{align*}
Because $\chi_k$ are orthogonal basis with respect to $L^2$-inner product, we write
\begin{align*}
    u=\sum a_i \chi_i, \quad \omega=\sum b_j\chi_j \text{  where $a_i,b_j$ are all constants.}
\end{align*}
Then,
\begin{align*}
     \frac{d^2}{dt^2}\Big|_{t=0}\lambda=\sum_i \chi_i^2\Big(\int_M -(\mu_i+12)a_i^2-10a_ib_i\mu_i-(\mu_i^2+2\mu_i)b_i^2dV_g\Big).
\end{align*}
By using the Cauchy--Schwarz inequality,
\begin{align*}
    \int_M -(\mu_i+12)a_i^2-10a_ib_i\mu_i-(\mu_i^2+2\mu_i)b_i^2 dV_g \leq 0&\Longleftrightarrow |10\mu_i|\leq 2\sqrt{\mu_i+12}\sqrt{\mu_i^2+2\mu_i}
    \\&\Longleftrightarrow  0\leq \mu_i\leq 3 \text{   or   } 8\leq \mu_i.
\end{align*}
Note that $ \mu_k=k(k+2)$, so we finish the proof.
\end{proof}

\begin{lemma}\label{L12}
Suppose that $(M,g,H)$ is a Bismut-flat manifold then $\frac{d^2}{dt^2}\lambda(\mathcal{V}^\perp\cap\mathcal{V}^\perp_1)\leq 0$ at $(M,g,H)$ where $\mathcal{V}^\perp\cap\mathcal{V}_1^\perp$ is given in \emph{(\ref{31})}. In fact, $\frac{d^2}{dt^2}\lambda(h,0)\leq 0$ for all $h\in\ker\divg$.
\end{lemma}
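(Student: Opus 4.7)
The plan is as follows. For $(h,0)$ with $\divg h = 0$ the formula (\ref{28}) simplifies dramatically: $dK=0$ kills every $K$--dependent term, the equation for $v_{h,K}$ becomes $\triangle v = \divg\divg h = 0$ (so $v$ is constant and $|\nabla v|^2$ disappears), and $\divg^*\divg h=0$. What remains is
\begin{align*}
\tfrac{d^2}{dt^2}\lambda(h,0) = \int_M \tfrac{1}{2}\langle \triangle_L h, h\rangle\,dV_g - \tfrac{1}{2}\int_M h_{ij}h_{ac}H_{iab}H_{jcb}\,dV_g.
\end{align*}
I would then expand $\langle \triangle_L h, h\rangle = \langle \triangle h,h\rangle + 2R_{ijkl}h_{il}h_{jk} - 2R_{ij}h_{ik}h_{jk}$, use the Einstein relation $R_{ij}=\tfrac{1}{4}H^2_{ij}$, and substitute the Bismut--flat identity from \Cref{P4}, $R_{ijkl} = -\tfrac{1}{2}\nabla_i H_{jkl} + \tfrac{1}{2}\nabla_j H_{ikl} + \tfrac{1}{4}(H_{ilm}H_{jkm} - H_{jlm}H_{ikm})$.

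The next step is to show that \emph{all} $\nabla H$ contributions vanish after integration by parts. After IBP, the first Bismut--flat piece becomes $\int H_{jkl}h_{il}\nabla_i h_{jk}\,dV_g$ (using $\divg h=0$ and $H_{jkl}h_{jk}=0$ from the symm/antisymm clash); swapping the dummies $j\leftrightarrow k$ then flips its sign, forcing it to be zero. The second piece vanishes identically by the same symm/antisymm contraction. The purely algebraic cross term $H_{ilm}H_{jkm}h_{il}h_{jk}$ is also zero because $H_{ilm}h_{il}=0$. What is left is a sum of three algebraic $H\otimes H\otimes h\otimes h$ terms:
\begin{align*}
T_1 &= -\tfrac{1}{4}\int H_{jlm}H_{ikm}h_{il}h_{jk}\,dV_g, \\
T_2 &= -\tfrac{1}{4}\int H_{iab}H_{jab}h_{ik}h_{jk}\,dV_g, \\
T_3 &= -\tfrac{1}{2}\int H_{iab}H_{jcb}h_{ij}h_{ac}\,dV_g.
\end{align*}

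I would then introduce the auxiliary tensor $M_{jab}\coloneqq h_{ji}H_{iab}$, which is antisymmetric in $a,b$. Careful bookkeeping with the symmetries of $h$ and the total antisymmetry of $H$ gives $T_2=-\tfrac{1}{4}\|M\|^2$, $T_1=-\tfrac{1}{4}\int M_{lkm}M_{klm}\,dV_g$, and $T_3=+\tfrac{1}{2}\int M_{jab}M_{ajb}\,dV_g$. Setting $P\coloneqq \int M_{jab}M_{ajb}\,dV_g$, we get $T_1+T_2+T_3 = \tfrac{1}{4}(P - \|M\|^2)$. Now let $M^{\tau}_{jab}\coloneqq M_{ajb}$; a dummy relabel shows $\|M^{\tau}\|=\|M\|$, so Cauchy--Schwarz yields $P = (M,M^{\tau})_{L^2} \le \|M\|\|M^{\tau}\| = \|M\|^2$, giving $T_1+T_2+T_3\le 0$. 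Combined with $\int \tfrac{1}{2}\langle \triangle h,h\rangle = -\tfrac{1}{2}\int |\nabla h|^2 \le 0$, the total is $\leq 0$. Since $\mathcal{V}^\perp\cap\mathcal{V}_1^\perp\subset\{(h,0):\divg h=0\}$, the ``in fact'' statement implies the first claim. The main obstacle is not the IBP cancellation (which is essentially forced by dummy swaps) but the algebraic identification in this last step: recognizing that the three seemingly different contractions all encode the same tensor $M$ up to a first--two--index transpose, so that Cauchy--Schwarz gives the right sign.
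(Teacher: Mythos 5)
Your proof is correct, and all the steps I checked go through — the reduction of (\ref{28}) for $(h,0)$ with $\divg h=0$, the vanishing of the $\nabla H$ contributions, the transcription of $T_1,T_2,T_3$ in terms of $M_{jab}=h_{ji}H_{iab}$, and the final Cauchy--Schwarz step — but the key inequality is obtained by a genuinely different route than in the paper. Both arguments arrive at the same exact identity
\begin{align*}
\frac{d^2}{dt^2}\lambda(h,0)=-\frac12\|\nabla h\|_{L^2}^2+\frac14\bigl(P-\|M\|_{L^2}^2\bigr),
\qquad M_{jab}\coloneqq h_{ji}H_{iab},\quad P\coloneqq\int_M M_{jab}M_{ajb}\,dV_g,
\end{align*}
and then discard different nonnegative quantities. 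The paper introduces the symmetrized derivative $Dh$, proves the Bochner-type identity (\ref{34}), rewrites the second variation as $-\frac12\|Dh\|^2_{L^2}+2\|\divg h\|^2_{L^2}-2\int_M R_{ij}h_{ik}h_{jk}\,dV_g$, and concludes from $\|Dh\|^2\ge 0$ together with $\Rc=\frac14H^2\ge 0$. You instead keep $-\frac12\|\nabla h\|^2_{L^2}\le 0$ separate and control the purely algebraic curvature contractions by the Cauchy--Schwarz inequality $P=(M,M^\tau)_{L^2}\le\|M\|_{L^2}\|M^\tau\|_{L^2}=\|M\|_{L^2}^2$. One can check that the two resulting upper bounds, $-\frac12\|Dh\|^2-\frac12\|M\|^2$ and $-\frac12\|\nabla h\|^2+\frac14(P-\|M\|^2)$, are literally equal when $\divg h=0$, so neither is sharper; your version trades the Bochner identity for elementary index bookkeeping and is arguably more self-contained. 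A minor further difference: the paper invokes $\nabla H=0$ on a Bismut-flat manifold to discard the derivative terms in the curvature formula of \Cref{P4}, whereas you retain them and kill them by symmetry; in fact both vanish pointwise (each contains $\nabla H$ contracted against a symmetric pair of $h$-indices, e.g.\ $\nabla_iH_{jkl}h_{jk}=0$ and $\nabla_jH_{ikl}h_{il}=0$), so even your integration by parts is not needed.
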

\begin{proof}
When $(g,H)$ is a Bismut flat metric, \Cref{P4} deduces that
\begin{align*}
    Rm(X,Y,Z,W)=\frac{1}{4}\langle H(X,W),H(Y,Z)\rangle-\frac{1}{4}\langle H(Y,W),H(X,Z)\rangle\quad \text{and } \quad \nabla H=0
\end{align*}
for all vector fields $X,Y,Z,W$. In particular,
\begin{align*}
    R_{ijkl}=\frac{1}{4}H_{ilb}H_{jkb}-\frac{1}{4}H_{jlb}H_{ikb}
\end{align*}
and
\begin{align*}
    \langle\mathring{R}h,h \rangle=R_{ijkl}h_{il}h_{jk}=-\frac{1}{4}H_{jlb}H_{ikb}h_{il}h_{jk}=\frac{1}{4}H_{ljb}H_{ikb}h_{il}h_{jk}.
\end{align*}
Thus, we may rewrite our second variation (\ref{28}) 
\begin{align*}
     \frac{d^2}{dt^2}\lambda(h,0)&\leq \int_M \langle \frac{1}{2}\triangle_L h+\text{div}^* \text{div}h,h \rangle -\frac{1}{2}h_{ij}h_{ac}H_{iab}H_{jcb} dV_g
     \\&=\int_M \langle \frac{1}{2}\triangle h+\text{div}^* \text{div}h,h \rangle- \langle\mathring{R}h,h \rangle-R_{ij}h_{ik}h_{jk} dV_g.
\end{align*}
Define 
\begin{align*}
    Dh(X,Y,Z)=\frac{1}{\sqrt{3}}(\nabla_Xh(Y,Z)+\nabla_Yh(Z,X)+\nabla_Zh(X,Y)).
\end{align*}
We compute
\begin{align*}
    \|Dh\|_{L^2}^2&=\frac{1}{3}\int_M (\nabla_ih_{jk}+\nabla_jh_{ki}+\nabla_kh_{ij})^2 dV_g=\|\nabla h\|_{L^2}^2+2\int_M \nabla_ih_{jk}\nabla_jh_{ki} dV_g.
\end{align*}
Note that 
\begin{align*}
 \int_M \nabla_ih_{jk}\nabla_jh_{ki} dV_g&=-\int_M h_{jk}\nabla_i\nabla_jh_{ki} dV_g
 \\&=-\int_M h_{jk}(\nabla_j\nabla_ih_{ki}-\mathring{R}(h)_{jk}+R_{jl}h_{lk})dV_g
 \\&=\int_M |\text{div}h|^2+\langle \mathring{R}h,h\rangle- R_{jl}h_{jk}h_{lk}dV_g
\end{align*}
so we conclude that 
\begin{align}
    \|Dh\|_{L^2}^2=\|\nabla h\|_{L^2}^2+2\|\text{div} h\|_{L^2}^2+2(\mathring{R}h,h)_{L^2}-2\int_M R_{ij}h_{ik}h_{jk} dV_g. \label{34}
\end{align}
Hence, by using the fact that $\Rc=\frac{1}{4}H^2$ is positive,
\begin{align*}
    \frac{d^2}{dt^2}\lambda(h,0)&\leq \int_M \langle \frac{1}{2}\triangle h+\text{div}^* \text{div}h,h \rangle- \langle\mathring{R}h,h \rangle-R_{ij}h_{ik}h_{jk} dV_g
    \\&= -\frac{1}{2} \|Dh\|_{L^2}^2+2\|\text{div} h\|_{L^2}^2-2\int_M R_{ij}h_{ik}h_{jk} dV_g\leq 0 \quad\text{ if div $h$=0.}
\end{align*}
For $n=3$ case, $\frac{d^2}{dt^2}\lambda(\mathcal{V}^\perp\cap\mathcal{V}^\perp_1)\leq 0$ is also followed directly by (\ref{33}).

\end{proof}

Similar to the previous Einstein--Hilbert functional case, the linear stability of $\lambda$ is corresponding to a Lichnerowicz type of Laplacian. Let us define
\begin{align}
    \triangle_G h:=\frac{1}{2}\triangle h+3\mathring{R}(h)+\frac{1}{2}(\Rc\circ h+h\circ \Rc) \quad \text{ where $h\in S^2M$.} \label{35}
\end{align}

\begin{theorem}\label{P6}
Suppose that $(M,g,H)$ is Bismut-flat, Einstein manifold. $(M,g,H)$ is linearly stable if $\triangle_G|_{TT_g}$ is negative semidefinite.
\end{theorem}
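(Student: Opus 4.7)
I follow the three-step strategy sketched in the introduction. By \Cref{L8}, the quadratic form $Q(h,K) \coloneqq (\mathcal{N}(h,K,v_{h,K}),(h,K,v_{h,K}))_f$ vanishes identically on the gauge subspace $\mathcal{V}$, so by the decomposition \eqref{32} and \Cref{L10} it suffices to prove $Q(h,K) \leq 0$ for variations of the form $(h,K) = (ug + h_{TT}, K)$ with $u \in C^\infty(M)$, $h_{TT} \in TT_g$, and $K \in \Omega^2$ (in dimension three one works instead with the more restrictive $\mathcal{V}_1$ from \eqref{29}).

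Substituting $h = ug + h_{TT}$ into the second variation formula \eqref{28}, most mixed $ug$-$h_{TT}$ contributions vanish automatically. The conditions $\tr h_{TT} = 0$ and $\divg h_{TT} = 0$ kill the pairings of $\triangle_L(ug) = (\triangle u)g$ and $\divg^*\divg(ug) = -\nabla^2 u$ with $h_{TT}$, while the Einstein identity $H^2 = 4\mu g$ together with $\tr h_{TT} = 0$ kills the mixed contribution to the $h_{ij}h_{ac}H_{iab}H_{jcb}$ term. The Bismut-flat identity $R_{ijkl} = \tfrac{1}{4}(H_{ilb}H_{jkb} - H_{jlb}H_{ikb})$ furthermore yields the algebraic equality $h_{TT,ij}h_{TT,ac}H_{iab}H_{jcb} = 4\langle \mathring{R} h_{TT}, h_{TT}\rangle$, converting the remaining $H^2$-term on $h_{TT}$ into a curvature quadratic. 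The only coupling between $u$ and $h_{TT}$ that survives is the linear-in-$dK$ pairing $\int h_{TT,ij}(dK)_{iab}H_{jab}\,dV$, alongside the $u\langle dK, H\rangle$ couplings already present in the proof of \Cref{L11}.

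The crux is to absorb these couplings through a ``proper decomposition of $dK$'' in the spirit of the introduction. Introducing the totally antisymmetrized three-form $\widetilde{h_{TT}}_{iab} = \tfrac{1}{3}(h_{TT,ij}H_{jab} + h_{TT,aj}H_{jbi} + h_{TT,bj}H_{jia})$, which satisfies $\langle dK,\widetilde{h_{TT}}\rangle = h_{TT}\cdot dK\cdot H$, I would generalize the two simultaneous square completions of \Cref{L11} (one in $dK - c_1 u H - c_2 \widetilde{h_{TT}}$ and one in $\nabla v - c_3 \nabla u$), choosing coefficients so that every linear-in-$dK$ and linear-in-$\nabla v$ term is absorbed. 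The cross term $\int u\langle H, \widetilde{h_{TT}}\rangle\,dV$ that arises from expanding the square vanishes pointwise because each of the three summands in $\langle H,\widetilde{h_{TT}}\rangle$ reduces, via the Jacobi identity for $H$ (equivalently, the algebraic Bianchi identity on a Bismut-flat manifold) together with $H^2 = 4\mu g$, to a multiple of $\tr h_{TT} = 0$. Using the same Jacobi identity to evaluate $|\widetilde{h_{TT}}|^2$ as a linear combination of $\mu|h_{TT}|^2$ and $\langle \mathring{R} h_{TT}, h_{TT}\rangle$, the residual $h_{TT}$-quadratic simplifies to $\int \langle \triangle_G h_{TT}, h_{TT}\rangle\,dV$, and the hypothesis $\triangle_G|_{TT_g}\leq 0$ then closes the argument.

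The main obstacle I foresee is this coefficient-matching step: one must verify that the two square completions and the $|\widetilde{h_{TT}}|^2$ evaluation conspire to produce exactly $\triangle_G$ on the $h_{TT}$-piece, up to a manifestly nonpositive error. This is not automatic but follows from careful bookkeeping with the Einstein relation $H^2 = 4\mu g$ and the Jacobi identity for $H$, both of which are pointwise algebraic.
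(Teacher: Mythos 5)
Your proposal follows essentially the same route as the paper's proof: the reduction via \Cref{L8}, \Cref{L10}--\Cref{L12} to variations $(ug+h_{TT},K)$, the vanishing of the $ug$--$h_{TT}$ cross terms via $\tr_g h_{TT}=0$ and $H^2=4\mu g$, and above all the decomposition of $dK$ along the antisymmetrized form $\widetilde{h_{TT}}$ — which is exactly the paper's $\Lambda$ in (\ref{36}), with the paper writing $dK=3\Lambda+N$ and reducing the residual $u$--$N$ part to the Cauchy--Schwarz argument of \Cref{L11}. The coefficient matching you flag as the main obstacle does close as you predict ($\langle\Lambda,H\rangle=4\mu\tr_g h_{TT}=0$ and $|\Lambda|^2=\tfrac{4}{3}\mu|h_{TT}|^2+\tfrac{8}{3}\langle\mathring{R}h_{TT},h_{TT}\rangle$ yield exactly $\int_M\langle\triangle_G h_{TT},h_{TT}\rangle\,dV_g$), so your sketch is a correct outline of the paper's argument.
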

\begin{proof}
For convenience, let us denote the Einstein constant by $\mu$.
By \Cref{L11} and \Cref{L12}, it suffices to check that
\begin{align*}
     \frac{d^2}{dt^2}\lambda(ug+h,K)\leq 0 \quad \text{ where $(ug,K)\in\mathcal{V}_1$ and $(h,0)\in \mathcal{V}^\perp\cap \mathcal{V}_1^\perp$, i.e., $h\in TT_g$.}
\end{align*}
Using the second variation (\ref{28}), we get
\begin{align*}
  \frac{d^2}{dt^2}\lambda(ug+h,K)&= \int_M (\frac{n-2}{2}u\triangle u-\frac{u^2}{2}|H|^2)+ \Big(\langle \frac{1}{2}\triangle h,h \rangle- \langle\mathring{R}h,h \rangle-\mu |h|^2\Big) dV_g
    \\& \kern2em+2\int_M(\frac{1}{2}\langle \triangle_L(ug),h \rangle-\frac{1}{2}(ug_{ij}h_{ac}H_{iab}H_{jcb})) dV_g
    \\& \kern2em+\int_M (ug_{ij}+h_{ij})(dK)_{iab}H_{jab}-\frac{1}{6}|dK|^2-\frac{1}{2}|\nabla v|^2 dV_g
\end{align*}
where $v$ is the unique solution of $\triangle v=\triangle u-\frac{1}{6}\langle dK,H \rangle$. First, let us compute the second line
\begin{align*}
  &\kern-1em 2\int_M\Big(\frac{1}{2}\langle \triangle_L(ug),h \rangle-\frac{1}{2}(ug_{ij}h_{ac}H_{iab}H_{jcb})\Big) dV_g
  \\&=\int_M \langle \triangle(ug),h \rangle+2R_{ijkl}(ug)_{il}h_{jk}-2R_{ij}(ug)_{ik}h_{jk}-(ug_{ij}h_{ac}H_{iab}H_{jcb}) dV_g
  \\&=\int_M (\triangle u)\tr_gh-uh_{ac}H^2_{ac} dV_g=\int_M (\triangle u-4\mu u)\tr_gh dV_g=0.
\end{align*}
Define a 3-form 
\begin{align}
    \Lambda_{aib}=\frac{1}{3}(h_{aj}H_{jib}+h_{bj}H_{jai}+h_{ij}H_{jba}) \label{36}
\end{align}
and we write 
\begin{align*}
    dK=3\Lambda + N \quad \text{where $N\in\Omega^3$. }
\end{align*}
We compute that
\begin{align*}
    \frac{1}{6}\int_M |dK|^2 dV_g&=\frac{1}{6}\int_M  \langle 3\Lambda+N,dK\rangle dV_g=\frac{1}{2}\int_M\langle \Lambda,dK\rangle dV_g+\frac{1}{6}\int_M\langle N,dK\rangle dV_g
    \\&=\frac{1}{2}\int_M\langle \Lambda,dK\rangle dV_g+\frac{1}{2}\int_M\langle N,\Lambda\rangle dV_g+\frac{1}{6}\int_M |N|^2dV_g
    \\&=\int_M\langle \Lambda,dK\rangle dV_g-\frac{3}{2}\int_M |\Lambda|^2dV_g+\frac{1}{6}\int_M |N|^2dV_g.
\end{align*}

It is a direct computation to see that 
\begin{align*}
    \langle \Lambda,H \rangle&=h_{aj}H_{jib}H_{aib}=4\mu \tr_gh=0,
    \\\langle \Lambda, dK \rangle&=h_{aj}H_{jib}(dK)_{aib},
\end{align*}
so our second variation is 
\begin{align}
     \nonumber\frac{d^2}{dt^2}\lambda(ug+h,dK)&= \int_M \frac{n-2}{2}u\triangle u-\frac{u^2}{2}|H|^2+u\langle N,H \rangle-\frac{1}{6}|N|^2-\frac{1}{2}|\nabla v|^2 dV_g
     \\&\kern2em+\int_M \langle \frac{1}{2}\triangle h,h \rangle- \langle\mathring{R}h,h \rangle-\mu |h|^2+\frac{3}{2}|\Lambda|^2 dV_g \label{37}
\end{align}
where $v$ is the unique solution of $\triangle v=\triangle u-\frac{1}{6}\langle N,H \rangle$. We observe that when $n\geq 4$ the first integral in (\ref{37}) is nonpositive by following the proof of \Cref{L11} (using the Cauchy-Schwarz inequality). In $n=3$ case,
\begin{align*}
    \Lambda_{123}=\frac{1}{3}(\tr_gh)H_{123}=0
\end{align*}
so the first integral in (\ref{37}) is also nonpositive by following the proof of \Cref{L11}.
Then, we compute
\begin{align*}
    |\Lambda^2|&=h_{aj}H_{jib}\Lambda_{aib}=\frac{1}{3}h_{aj}h_{ak}H_{jib}H_{kib}+\frac{2}{3}h_{aj}h_{bk}H_{bji}H_{kai}
    \\&=\frac{4}{3}\mu|h|^2+\frac{8}{3}\langle\mathring{R}(h),h \rangle,
\end{align*}
so
\begin{align}
    \nonumber\frac{d^2}{dt^2}\lambda(ug+h,dK)&\leq \int_M \langle \frac{1}{2}\triangle h,h \rangle- \langle\mathring{R}h,h \rangle-\mu |h|^2+\frac{3}{2}|\Lambda|^2 dV_g
    \\&=\int_M \langle \frac{1}{2}\triangle h,h \rangle+3 \langle\mathring{R}h,h \rangle+\mu |h|^2 dV_g=\int_M \langle \triangle_G h,h \rangle dV_g\leq 0 \label{38}
\end{align}
provided that $\triangle_G$ is nonpositive. 
\end{proof}

\begin{remark}\label{R11}
The converse of the statement in \Cref{P6} may not be true. The reason is that we may not be able to find a 2-form $K$ such that 
\begin{align*}
    dK=3\Lambda \quad \text{for some $h\in TT_g$.}
\end{align*}
Algebraically, the upper bound we derive is the best; however, we didn't use the fact that our variation $dK$ is not an arbitrary 3-form.

\end{remark}

\begin{corollary}\label{C8}
Any 3-dimensional generalized Einstein manifold $(M,g,H)$ is linearly stable.
\end{corollary}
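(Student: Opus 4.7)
The plan is to invoke \Cref{P6}, reducing the problem to verifying that $\triangle_G|_{TT_g}$ is negative semidefinite on any compact 3-dimensional generalized Einstein manifold. By \Cref{C3}, such a manifold is automatically Bismut-flat and Einstein with constant nonnegative sectional curvature $K = \phi^2/4$, where $H = \phi\,dV_g$; hence the hypotheses of \Cref{P6} are met, and everything comes down to a short curvature calculation on $TT_g$.

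First I would substitute the constant-sectional-curvature identities $R_{ijkl} = K(g_{il}g_{jk} - g_{ik}g_{jl})$ and $\Rc = 2Kg$ into the definition (\ref{35}) of $\triangle_G$. For $h \in TT_g$, the trace-free condition yields
\begin{align*}
(\mathring{R}h)_{jk} = K\bigl(\tr_g(h)\,g_{jk} - h_{jk}\bigr) = -K h_{jk}, \qquad \tfrac{1}{2}(\Rc\circ h + h\circ \Rc)_{jk} = 2K h_{jk},
\end{align*}
so that on $TT_g$ one obtains the clean expression
\begin{align*}
\triangle_G h = \tfrac{1}{2}\triangle h - K h.
\end{align*}

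The final step is an immediate integration by parts:
\begin{align*}
\int_M \langle \triangle_G h, h\rangle\, dV_g = -\tfrac{1}{2}\int_M |\nabla h|^2\, dV_g - K\int_M |h|^2\, dV_g \leq 0,
\end{align*}
using $K \geq 0$. This verifies the hypothesis of \Cref{P6} and yields linear stability. There is essentially no obstacle here: the substantive work was already carried out in \Cref{P6}, and the only observation one needs is that \Cref{C3} forces three-dimensional generalized Einstein manifolds to live in the constant-sectional-curvature regime, where $\mathring{R}$ acts by a scalar on trace-free symmetric 2-tensors, collapsing $\triangle_G$ to a manifestly nonpositive operator on $TT_g$.
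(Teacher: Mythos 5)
Your proof is correct and follows essentially the same route as the paper: both reduce the corollary to checking that $\triangle_G|_{TT_g}$ is negative semidefinite via \Cref{C3} and the constant-sectional-curvature structure, then invoke \Cref{P6}. The only difference is presentational — you compute invariantly with general $K=\phi^2/4\geq 0$, while the paper normalizes $H=2\,dV_g$ and expands $3R_{ijkl}h_{il}h_{jk}+R_{jl}h_{lk}h_{jk}=-|h|^2$ in components — and both yield the same nonpositive operator on $TT_g$.
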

\begin{proof}
For $n=3$ case, we claim that $\triangle_G|_{TT_g}$ is negative semidefinite. Due to \Cref{C3}, we write $H=2dV_g$ and compute that
\begin{align*}
    R_{ijkl}h_{il}h_{jk}&=2\Big[(h_{11}h_{22}-h^2_{12})+(h_{11}h_{33}-h^2_{13})+(h_{22}h_{33}-h^2_{23})\Big],
    \\R_{jl}h_{lk}h_{jk}&=2(h_{11}^2+h_{22}^2+h_{33}^2+2h_{12}^2+2h_{13}^2+2h_{23}^2).
\end{align*}
Then,
\begin{align*}
   &\kern-1em 3R_{ijkl}h_{il}h_{jk}+R_{jl}h_{lk}h_{jk}
   \\&=2(h_{11}^2+h_{22}^2+h_{33}^2+3h_{11}h_{22}+3h_{11}h_{33}+3h_{22}h_{33})-2(h_{12}^2+h_{13}^2+h_{23}^2)
   \\&=-(h_{11}^2+h_{22}^2+h_{33}^2)-2(h_{12}^2+h_{13}^2+h_{23}^2)=-|h|^2.
\end{align*}
Therefore,
\begin{align*}
    \int_M \langle \triangle_Gh,h \rangle dV_g&=\int_M \langle \triangle h,h \rangle+3R_{ijkl}h_{il}h_{jk}+R_{jl}h_{lk}h_{jk} dV_g\leq -\|h\|^2_{L^2}\leq 0.
\end{align*}
\end{proof}

To end this section, we would like to say that it is possible for us to find the kernel of variation.
\begin{proposition}\label{P7}
Suppose $(M,g,H)$ is a 3-dimensional generalized Einstein manifold. 
\begin{align*}
     \frac{d^2}{dt^2}\Big|_{t=0}\lambda(h,K)=0\Longleftrightarrow (h,K)\in \mathcal{V}+\widetilde{\mathcal{V}}
\end{align*}
where
\begin{align*}
    \widetilde{\mathcal{V}}=\{(ug,-d^*(\omega dV_g)): u=a_1\chi_1+a_2\chi_2,\, \omega=-a_1\chi_1-\frac{a_2}{2}\chi_2,\,  \text{  $a_1,a_2$ are constants } \} , 
\end{align*}
$\chi_1,\chi_2$ are first eignefunction and second eigenfunction of $\triangle$ respectively and $\mathcal{V}$ is defined in (\ref{27}).
\end{proposition}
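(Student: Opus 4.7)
The plan is to analyze the nullspace of the quadratic form $(h,K)\mapsto(\mathcal{N}(h,K),(h,K))_{f=0}$ by combining the orthogonal splitting of Lemma \ref{L10},
\[
\Gamma(S^2M)\times\Omega^2=(\mathcal{V}+\mathcal{V}_1)\oplus(\mathcal{V}^\perp\cap\mathcal{V}_1^\perp),
\]
with Corollary \ref{C3}, which identifies $(M,g,H)$ as a quotient of $S^3$ with $H=2\,dV_g$ and Einstein constant $2$. For the backward inclusion, Lemma \ref{L8} together with the self-adjointness of $\mathcal{N}$ (Corollary \ref{C5}) yields $(\mathcal{N}v,w)_{f=0}=0$ for every $v\in\mathcal{V}$ and every $w$, so it suffices to check that each element of $\widetilde{\mathcal{V}}$ is a null direction. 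Plugging $u=a_1\chi_1+a_2\chi_2$ and $\omega=-a_1\chi_1-\tfrac{a_2}{2}\chi_2$ into the eigenfunction expansion obtained in the proof of Lemma \ref{L11},
\[
\tfrac{d^2}{dt^2}\bigl|_{t=0}\lambda(ug,-d^*(\omega\,dV_g))=\sum_i\|\chi_i\|_{L^2}^2\,Q_{\mu_i}(a_i,b_i),\quad Q_\mu(a,b)=-(\mu+12)a^2-10\mu ab-(\mu^2+2\mu)b^2,
\]
the $\mu_1=3$ and $\mu_2=8$ contributions reduce to $-15(a_1+b_1)^2=0$ and $-20(a_2+2b_2)^2=0$ under the prescribed relations $b_1=-a_1$, $b_2=-\tfrac{a_2}{2}$.

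For the forward inclusion, decompose $(h,K)=v+(ug,K_1)+(h',0)$ via Lemma \ref{L10}, with $v\in\mathcal{V}$, $(ug,K_1)\in\mathcal{V}_1$ (so $K_1=-d^*(\omega\,dV_g)$), and $h'\in TT_g$. I verify that the bilinear form of $\mathcal{N}$ has no cross terms among these three pieces: cross terms with $v$ vanish by Lemma \ref{L8} and self-adjointness; cross terms between $(ug,K_1)$ and $(h',0)$ are exactly the cancellations used in the proof of Theorem \ref{P6}, which in three dimensions further collapse via $H^2_{jk}=8\delta_{jk}$ and $\epsilon_{iab}\epsilon_{jab}=2\delta_{ij}$ to multiples of $\tr_g h'=0$ or $\divg h'=0$; the $|\nabla v_{h,K}|^2$ cross term vanishes because $v_{h',0}$ solves $\triangle v=\divg\divg h'=0$ and is therefore constant. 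Hence
\[
\tfrac{d^2}{dt^2}\bigl|_{t=0}\lambda(h,K)=\tfrac{d^2}{dt^2}\bigl|_{t=0}\lambda(ug,K_1)+\tfrac{d^2}{dt^2}\bigl|_{t=0}\lambda(h',0),
\]
and both summands are nonpositive by Lemma \ref{L11} and Corollary \ref{C8}. Vanishing of the total forces each to vanish: Corollary \ref{C8} gives $\tfrac{d^2}{dt^2}\lambda(h',0)\le-\|h'\|_{L^2}^2$, so $h'=0$; completing the square on $Q_\mu$ yields $-(\mu+12)\bigl(a+\tfrac{5\mu}{\mu+12}b\bigr)^2-\tfrac{\mu(\mu-3)(\mu-8)}{\mu+12}b^2$, which is strictly negative on the spectrum $\{0,3,8,15,24,\dots\}$ except at $\mu\in\{0,3,8\}$. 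The $\mu_0=0$ null direction forces the constant component of $u$ to vanish, while a constant $\omega$ produces $K_1=0$ (trivially absorbable); the nontrivial null directions at $\mu_1=3$ and $\mu_2=8$ precisely reproduce $\widetilde{\mathcal{V}}$, so $(h,K)\in\mathcal{V}+\widetilde{\mathcal{V}}$.

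The main obstacle is the three-dimensional cross-term bookkeeping. Theorem \ref{P6} was written with $h\in TT_g$, whereas here one must simultaneously track the conformal direction $ug$ and the transverse-traceless direction $h'$; the crucial input is that on a generalized Einstein $3$-manifold $R_{jk}=2\delta_{jk}$ and $H=2\,dV_g$, which together force every cross contraction to be a constant multiple of $\tr_g h'$ or $\divg h'$, both of which vanish on $TT_g$.
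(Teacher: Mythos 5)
Your proposal is correct and follows essentially the same route as the paper: decompose via $(\mathcal{V}+\mathcal{V}_1)\oplus(\mathcal{V}^\perp\cap\mathcal{V}_1^\perp)$, kill the $\mathcal{V}$ and cross contributions, run the eigenfunction expansion of Lemma \ref{L11} on the $\mathcal{V}_1$ piece (your completing-the-square form of $Q_\mu$ is just the explicit version of the paper's Cauchy--Schwarz equality case, picking out $\mu\in\{0,3,8\}$), and use $\tfrac{d^2}{dt^2}\lambda(h',0)\leq-\|h'\|_{L^2}^2$ to force $h'=0$. The only difference is that you spell out the cross-term cancellations and the absorption of the $\mu_0=0$ mode, which the paper asserts implicitly.
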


\begin{proof}
By following the proof of \Cref{L11} and \Cref{P6}, we assume that $M$ is a unit sphere. In this case, we have
\begin{align*}
     \frac{d^2}{dt^2}\Big|_{t=0}\lambda\Big((ug,K)+(h,0)\Big)=\frac{d^2}{dt^2}\Big|_{t=0}\lambda(ug,K)+\frac{d^2}{dt^2}\Big|_{t=0}\lambda(h,0)=0.
\end{align*}
Here $(ug,K)\in \mathcal{V}_1$ and $(h,0)\in \mathcal{V}^\perp \cap \mathcal{V}_1^\perp$. Write $K=-d^*(\omega dV_g)$ and
\begin{align*}
    u=\sum a_i \chi_i, \quad \omega=\sum b_j\chi_j \text{  where $a_i,b_j$ are all constants, $\chi_i$ are eigenfunctions of $\triangle$.}
\end{align*}
Then,
\begin{align*}
  \frac{d^2}{dt^2}\Big|_{t=0}\lambda(ug,\triangle\omega)=0\Longleftrightarrow \int_M -(\mu_i+12)a_i^2-10a_ib_i\mu_i-(\mu_i^2+2\mu_i)b_i^2=0 \text{  for all $i$.}
\end{align*}
It implies that $a_1=-b_1$, $a_2=-2b_2$ and $(a_k,b_k)=(0,0)$ for $k\geq 3$. On the other hand,
\begin{align*}
     \frac{d^2}{dt^2}\Big|_{t=0}\lambda(h,0)= \int_M \langle \frac{1}{2}\triangle h,h \rangle-|h|^2 dV_g= 0 \Longleftrightarrow h=0.
\end{align*}
\end{proof}
\begin{remark}
In above result, we have two kernel variations $(\chi_1 g, d^*(\chi_1 dV_g))$ and $(\chi_2 g,\frac{1}{2}d^*(\chi_2 dV_g))$. In fact, the first one $(\chi_1 g, d^*(\chi_1 dV_g))\in\mathcal{V}$ is trivial and the second one can be decomposed as
\begin{align*}
    (\chi_2 g,\frac{1}{2}d^*(\chi_2 dV_g))=(\chi_2 g+\frac{1}{4}\nabla^2\chi_2,\frac{1}{4}d^*(\chi_2 dV_g))+(-\frac{1}{4}\nabla^2 \chi_2,\frac{1}{4}d^*(\chi_2 dV_g))\in \mathcal{V}^\perp+\mathcal{V},
\end{align*}
i.e., the only nontrivial kernel variation is 
\begin{align*}
    (\chi_2 g+\frac{1}{4}\nabla^2\chi_2,\frac{1}{4}d^*(\chi_2 dV_g)).
\end{align*}
\end{remark}

\section{Dynamical Stability and Instability of GRF}
The main goal of this section is to discuss dynamical stability properties. First, let us mention some definitions.

\subsection{Definition}

\begin{defn}\label{D15}
Let $(M^n,g_c,H_c,f_c)$ be a steady generalized gradient Ricci soliton with its corresponding generalized metric $\mathcal{G}_c=\mathcal{G}_c(g_c,0)$ and a background $3$-form $H_c$.
\begin{itemize}
    \item  We say that $(g_c,H_c,f_c)$ is \emph{dynamically stable} if for any neighborhood $\mathcal{U}$ of $(g_c,0)$ in $\mathcal{M}\times \Omega^2$, there exists a smaller neighborhood $\mathcal{V}\subset\mathcal{U}$ such that the generalized Ricci flow starting in $\mathcal{V}$ stays in $\mathcal{U}$ for all $t\geq 0$ and converges to a critical point $(g_\infty,b_\infty)$ of $\lambda$ with $\lambda(g_c,H_c)=\lambda(g_\infty,H_\infty)$.
    \item We say that $(g_c,H_c,f_c)$ is \emph{dynamically stable modulo diffeomorphism} if for any neighborhood $\mathcal{U}=B_\epsilon$ of $(g_c,0)$ in $\mathcal{M}\times \Omega^2$, there exists a smaller neighborhood $\mathcal{V}\subset\mathcal{U}$ such that for any generalized Ricci flow starting in $\mathcal{V}$ we have a family of diffeomorphisms $\{\varphi_t\}_{t\geq 0}$ such that
\begin{align*}
    \|(\varphi^*_tg_t,\varphi^*_tH_t)-(g_c,H_c)\|<\epsilon
\end{align*}
for all $t\geq 0$ and this GRF converges to a critical point $(g_\infty,H_\infty)$ of $\lambda$ with $\lambda(g_c,H_c)=\lambda(g_\infty,H_\infty)$.
\item We say that $(g_c,H_c,f_c)$ is \emph{dynamically unstable (modulo diffeomorphism)} if there exists a non-trivial generalized Ricci flow $(g(t),b(t))$ with $t\in(-\infty, T]$ such that $(g(t),b(t))\to (g_c,0)$ as $t\to-\infty$ (there exists a family of diffeomorphisms $\{\varphi_t\}$ with $t\in(-\infty, T]$ such that $(\varphi^*_t g_t,\varphi^*_t H_t)\to (g_c,0)$ as $t\to-\infty$).
\end{itemize}

\end{defn}

\begin{remark}
\Cref{R1} suggests us that if we fix a background closed 3-form $H_c$, we can also denote  
\begin{align*}
    \mathcal{F}(g,b,f)=\int_M (R-\frac{1}{12}|H_c+db|^2+|\nabla f|^2)e^{-f}dV_g
\end{align*}
and 
\begin{align*}
    \lambda(g,b)=\inf\Big\{\mathcal{F}(g,b,f)\big|\, f\in C^\infty(M),\,\int_M e^{-f}dV_g=1\Big \}.
\end{align*}
We will also use this notation in the remaining subsections.
\end{remark}

\begin{lemma}\label{L14}
Let $(M^n,g_c,H_c,f_c)$ be a steady generalized gradient Ricci soliton with its corresponding generalized metric $\mathcal{G}_c=\mathcal{G}_c(g_c,0)$ and a background $3$-form $H_c$. Suppose $(g_c,H_c,f_c)$ is dynamically stable or dynamically stable modulo diffeomorphism, then $(g_c,H_c,f_c)$ is a local maximum point of $\lambda$. In other words, $(g_c,H_c,f_c)$ is linearly stable.
\end{lemma}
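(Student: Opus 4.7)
The plan is to argue by contradiction, combining three ingredients from the earlier sections: the first variation of $\lambda$ vanishes at $(g_c,H_c,f_c)$ (\Cref{T6} together with the soliton equation \emph{(\ref{15})}), $\lambda$ is monotone nondecreasing along every generalized Ricci flow (\Cref{C4}), and $\lambda$ is $\GDiff$-invariant (\Cref{R2}).

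First I would assume, toward contradiction, that $(g_c,H_c,f_c)$ is not linearly stable, so that there exists a variation $(h,K)\in \Gamma(S^2M)\times\Omega^2$ with $\frac{d^2}{dt^2}\big|_{t=0}\lambda(g_c+th,H_c+t\,dK)>0$. Along the straight-line curve $s\mapsto (g(s),b(s))\coloneqq(g_c+sh,\,sK)$ in $\mathcal{M}\times\Omega^2$---whose associated $3$-form is $H_c+s\,dK$ by \Cref{R1}---the vanishing of the first variation and a Taylor expansion give
\begin{align*}
\lambda(g(s),b(s))=\lambda(g_c,H_c)+\tfrac{s^2}{2}\tfrac{d^2}{dt^2}\Big|_{t=0}\lambda+O(s^3)>\lambda(g_c,H_c)
\end{align*}
for all sufficiently small $s\neq 0$. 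In particular, we may place this initial datum into any prescribed $C^k$-neighborhood of $(g_c,0)$ while still having strictly larger $\lambda$-value than $(g_c,H_c)$.

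Next I would invoke dynamical stability: for a small enough neighborhood $\mathcal{U}$ of $(g_c,0)$, pick the $\mathcal{V}\subset\mathcal{U}$ furnished by \Cref{D15} and then choose $s$ small enough that $(g(s),b(s))\in\mathcal{V}$. The resulting generalized Ricci flow $(\tilde g_t,\tilde b_t)$ stays in $\mathcal{U}$ and converges to a critical point $(g_\infty,b_\infty)$ with $\lambda(g_\infty,H_\infty)=\lambda(g_c,H_c)$, while monotonicity from \Cref{C4} forces
\begin{align*}
\lambda(g_c,H_c)<\lambda(\tilde g_0,\tilde b_0)\leq \lim_{t\to\infty}\lambda(\tilde g_t,\tilde b_t)=\lambda(g_\infty,H_\infty)=\lambda(g_c,H_c),
\end{align*}
the desired contradiction. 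The modulo-diffeomorphism case follows by identical logic together with $\GDiff$-invariance: since $\lambda(\varphi_t^*\tilde g_t,\varphi_t^*\tilde H_t)=\lambda(\tilde g_t,\tilde H_t)$ for the family $\{\varphi_t\}$ produced by the definition, convergence after pullback preserves the $\lambda$-value in the limit, and the same chain of inequalities applies verbatim.

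I do not anticipate a serious obstacle: the argument is a soft consequence of flow-monotonicity and the Taylor principle that a strictly positive second derivative at a critical point obstructs a local maximum. The only technical point worth verifying carefully is the smooth dependence of the minimizer $f_s$ on $(g(s),b(s))$ near the soliton, which is needed to justify the second-order Taylor expansion of $\lambda$ along the curve; this in turn follows from simplicity of the lowest eigenvalue of $-4\triangle+R-\tfrac{1}{12}|H|^2$ at $(g_c,H_c)$ and standard analytic perturbation theory for self-adjoint elliptic operators.
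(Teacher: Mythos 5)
Your core mechanism---start the flow at a nearby point with strictly larger $\lambda$, then play the monotonicity of $\lambda$ along the generalized Ricci flow (\Cref{C4}) against the requirement $\lambda(g_\infty,H_\infty)=\lambda(g_c,H_c)$ built into \Cref{D15}---is exactly the paper's argument, and that part of your proposal is correct, including the remark that the modulo-diffeomorphism case reduces to the same chain of inequalities by $\GDiff$-invariance. However, you have inverted the logical structure in a way that costs you half of the stated conclusion. The lemma asserts two things: that $(g_c,H_c)$ is a local maximum of $\lambda$, and, as a consequence, that it is linearly stable. The paper negates ``local maximum'' directly, which immediately supplies, in every neighborhood of $(g_c,0)$, a point $(g_r,b_r)$ with $\lambda(g_r,b_r)>\lambda(g_c,0)$; the contradiction then follows with no further input. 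You instead negate ``linearly stable'' and manufacture the nearby point via a second-order Taylor expansion of $\lambda$ along a line. This (a) proves only the linear-stability conclusion---nonpositivity of the second variation in every direction does not imply a local maximum, so the first (and primary) claim of the lemma is not established by your route---and (b) imports an extra analytic ingredient, namely the $C^2$ (in fact analytic) dependence of $\lambda$ on $(g,b)$ from \Cref{P8}, which the paper's version does not need. The fix is immediate: run your contradiction argument from the negation of ``local maximum'' instead; linear stability then follows from the elementary fact that a critical point which is a local maximum has nonpositive second variation in every direction.
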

\begin{proof}
If $(g_c,H_c,f_c)$ is not a local maximum, then for any positive number $r$ there exists a metric $(g_r,b_r)$ in the neighborhood $B_r$ of $(g_c,0)$ such that $\lambda(g_r,b_r)>\lambda(g_c,0)$. By dynamical stability, 
\begin{align*}
    (g_r,b_r)\longrightarrow (g_\infty,b_\infty) \text{ when $r$ is small enough.}
\end{align*}
However, \Cref{C5} implies that $\lambda$ is monotone increasing along the GRF, that is,
\begin{align*}
    \lambda(g_\infty,H_\infty)\geq\lambda(g_r,b_r)>\lambda(g_c,0)
\end{align*}
which is a contradiction. 
\end{proof}

\subsection{Analyticity of $\lambda$ }
In the following, we fix a generalized metric $\mathcal{G}_c=\mathcal{G}_c(g_c,0)$ and a background 3-form $H_c$. In the following, $C$ is a constant which may change from line to line.

\begin{lemma}\label{L15}
Let $\mathcal{G}_c=\mathcal{G}_c(g_c,0)$ be a generalized metric with a background 3-form $H_c$ on a compact manifold $M$. Define 
\begin{align*}
    \omega_{(g,b)}=e^{-\frac{f_{(g,b)}}{2}} \text{ where $f_{(g,b)}$ is the minimizer, i.e., $\lambda(g,b)=\mathcal{F}(g,b,f_{(g,b)})$.}
\end{align*}
Then, there exists a $C^{2,\alpha}$-neighborhood $\mathcal{U}$ of $(g_c,0)$ in $\mathcal{M}\times\Omega^2$ such that 
\begin{align*}
    \|\omega_{(g,b)}\|_{C^{2,\alpha}}\leq C .
\end{align*}
\end{lemma}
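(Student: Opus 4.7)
The plan is to identify $\omega_{(g,b)}$ as the positive, $L^2$-normalized first eigenfunction of the Schrödinger operator
\[
L_{(g,b)} \;=\; -4\triangle_g + R_g - \tfrac{1}{12}|H_c+db|^2,
\]
i.e., $L_{(g,b)}\omega_{(g,b)} = \lambda(g,b)\omega_{(g,b)}$ with $\int_M \omega_{(g,b)}^2\,dV_g = 1$, and then to bootstrap via elliptic regularity. Shrinking $\mathcal{U}$ if necessary, I may assume that every $g$ in $\mathcal{U}$ is uniformly equivalent to $g_c$ as a bilinear form (so the Laplacians are uniformly elliptic), and that $R_g$ and $|H_c+db|^2$ are uniformly bounded in $C^{0,\alpha}$. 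The Rayleigh quotient characterization \eqref{3} then shows that $(g,b)\mapsto\lambda(g,b)$ is continuous in the $C^{2,\alpha}$ topology, so $|\lambda(g,b)|\le C$ uniformly on $\mathcal{U}$.

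The first substantive step is a uniform $L^\infty$ bound on $\omega_{(g,b)}$. Rewriting the eigenvalue equation as
\[
-4\triangle_g\, \omega_{(g,b)} \;=\; \bigl(\lambda(g,b) - R_g + \tfrac{1}{12}|H|^2\bigr)\,\omega_{(g,b)},
\]
the right-hand side is a uniformly bounded function times $\omega_{(g,b)}$. Standard Moser iteration applied to this linear elliptic equation yields $\|\omega_{(g,b)}\|_{L^\infty}\le C\|\omega_{(g,b)}\|_{L^2}=C$, where $C$ depends only on the ellipticity constants, the Sobolev constants, and a bound on the zeroth-order coefficient — all uniform over $\mathcal{U}$ by the preceding paragraph.

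With $\|\omega_{(g,b)}\|_{L^\infty}\le C$ in hand, the right-hand side of the elliptic equation lies in $L^\infty$ uniformly. The $W^{2,p}$ estimate for any finite $p$ gives a uniform $W^{2,p}$ bound on $\omega_{(g,b)}$; choosing $p$ so large that $W^{2,p}\hookrightarrow C^{1,\alpha}$ upgrades this to a uniform $C^{1,\alpha}$ bound. The right-hand side $(\lambda-R_g+\tfrac{1}{12}|H|^2)\omega_{(g,b)}$ then lies in $C^{0,\alpha}$ uniformly, since $R_g$ and $|H|^2$ are already controlled in $C^{0,\alpha}$. A final application of the (global) Schauder estimate on the closed manifold $M$ produces $\|\omega_{(g,b)}\|_{C^{2,\alpha}}\le C$, as required.

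The main obstacle is not any individual step but the bookkeeping that makes all the constants uniform across $\mathcal{U}$. In particular, one must ensure that the ellipticity constant of $\triangle_g$, the Sobolev constant for $(M,g)$, and the $C^{0,\alpha}$ norms of $R_g$ and $|H_c+db|^2$ can all be controlled by a single constant depending only on $(g_c,H_c)$ and the radius of $\mathcal{U}$. This is precisely where the choice of the $C^{2,\alpha}$-topology on $\mathcal{M}\times\Omega^2$ enters: two derivatives on $g$ and one on $b$ are exactly what is needed to control the zeroth-order coefficient of $L_{(g,b)}$ in $C^{0,\alpha}$, which in turn is what Schauder theory demands for a $C^{2,\alpha}$ conclusion on $\omega_{(g,b)}$.
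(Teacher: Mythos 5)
Your proof is correct and follows essentially the same route as the paper: identify $\omega_{(g,b)}$ as the normalized first eigenfunction of the Schr\"odinger operator $-4\triangle+R-\frac{1}{12}|H|^2$, bound $\lambda(g,b)$ uniformly on a $C^{2,\alpha}$-neighborhood via the Rayleigh quotient, and bootstrap elliptic regularity to reach $C^{2,\alpha}$. The only cosmetic difference is that the paper starts the bootstrap from the $W^{1,2}$ energy bound $4\|\nabla\omega\|^2_{L^2}\le \sup(R-\frac{1}{12}|H|^2)-\inf(R-\frac{1}{12}|H|^2)$ and climbs the Sobolev ladder, whereas you reach $L^\infty$ in one step by Moser iteration before applying $W^{2,p}$ and Schauder estimates.
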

\begin{proof}
Let $(g,b)$ lie in a $C^{2,\alpha}$-neighborhood $\mathcal{U}$ of $(g_c,0)$, then
\begin{align*}
    \|g-g_c\|_{C^{2,\alpha}}<C, \quad \|b\|_{C^{2,\alpha}}<C \quad (\|H-H_c\|_{C^{1,\alpha}}=\|db\|_{C^{1,\alpha}}<C).
\end{align*}
Recall that 
\begin{align*}
    \lambda(g,b)&=\inf \left\{\int_M (R-\frac{1}{12}|H|^2+|\nabla f|^2)e^{-f}dV_g: \int_M e^{-f}dV_g=1\right\}
    \\&=\inf\left\{\int_M (R-\frac{1}{12}|H|^2)\omega^2+4|\nabla \omega|^2dV_g: \|\omega\|_{L^2}=1\right\}.
\end{align*}
By taking $\omega_{(g,b)}=e^{\frac{-f_{(g,b)}}{2}}$,
\begin{align*}
    \lambda(g,b)=\int_M (R-\frac{1}{12}|H|^2)\omega_{(g,b)}^2+4|\nabla \omega_{(g,b)}|^2dV_g.
\end{align*}
Note that if we take $\omega=(\text{Vol}(M,g))^{-\frac{1}{2}}$ then
\begin{align*}
    \lambda(g,b)\leq \int_M (R-\frac{1}{12}|H|^2)(\text{Vol}(M,g))^{-1}dV_g\leq \sup_{M}(R-\frac{1}{12}|H|^2),
\end{align*}
therefore,
\begin{align*}
    4\|\nabla\omega_{(g,b)}\|^2_{L^2}&=\lambda(g,b)-\int_M (R-\frac{1}{12}|H|^2)\omega_{(g,b)}^2 dV_g
    \\&\leq  \sup_{M}(R-\frac{1}{12}|H|^2) - \inf_{M}(R-\frac{1}{12}|H|^2)\leq C \quad \text{(since $M$ is compact)} .
\end{align*}
Then
\begin{align*}
    \|\omega_{(g,b)}\|_{W^{1,2}}\leq C&\Longrightarrow \|\omega_{(g,b)}\|_{L^{\frac{2n}{n-2}}}\leq C \quad \text{(by the Sobolev embedding theorem)}
    \\&\Longrightarrow \|\omega_{(g,b)}\|_{W^{\frac{2n}{n-2}},2}\leq C \quad \text{(by elliptic regularity).}
\end{align*}
Using Hölder's inequality, the Sobolev theorem, and elliptic regularity several times, we can see that
\begin{align*}
    \|\omega_{(g,b)}\|_{W^{p,2}}\leq C \text{   for all $p\in (1,\infty)$.}
\end{align*}
By choosing $p$ large enough, we have $\|\omega_{(g,b)}\|_{C^{2,\alpha}}\leq C$.
\end{proof}

\begin{proposition}\label{P8}
Let $(M,g_c,H_c,f_c)$ be a compact steady gradient generalized Ricci soliton with its corresponding generalized metric $\mathcal{G}_c=\mathcal{G}_c(g_c,0)$ and a background $3$-form $H_c$. There exists a $C^{2,\alpha}$-neighborhood $\mathcal{U}$ of $(g_c,0)$ such that the minimizers $f_{(g,b)}$ depends analytically on $(g,b)$ and $(g,b)\longmapsto\lambda(g,b)$ is analytic in $\mathcal{U}$.
\end{proposition}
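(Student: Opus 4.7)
The plan is to prove \Cref{P8} via the analytic implicit function theorem on Hölder spaces. First, as recalled in the proof of \Cref{L15}, setting $\omega = e^{-f/2}$ converts the variational problem for $\lambda(g,b)$ into an eigenvalue problem: $\omega_{(g,b)}$ is the (unique up to sign) positive $L^2_g$-normalized eigenfunction of the Schrödinger operator
\[
    L_{(g,b)}\omega := -4\Delta_g\omega + \Bigl(R_g - \tfrac{1}{12}|H_c+db|_g^2\Bigr)\omega,
\]
with eigenvalue $\lambda(g,b)$. Proving that $f_{(g,b)} = -2\log\omega_{(g,b)}$ and $\lambda$ depend analytically on $(g,b)$ thus reduces to proving analytic dependence of the bottom eigendata.

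With this reformulation in hand, I would introduce the zero-finding map
\begin{align*}
    F\colon C^{2,\alpha}(M)\times \mathbb{R}\times \mathcal{U} &\longrightarrow C^{0,\alpha}(M)\times \mathbb{R},\\
    (\omega,\mu,g,b) &\longmapsto \Bigl(L_{(g,b)}\omega - \mu\omega,\ \int_M \omega^2\, dV_g - 1\Bigr),
\end{align*}
where $\mathcal{U}$ is a $C^{2,\alpha}$-neighborhood of $(g_c,0)$ small enough that the metrics remain uniformly positive definite. The map $F$ is real analytic: $g\mapsto g^{-1}$ and $g\mapsto\sqrt{\det g}$ are analytic on the open cone of positive-definite matrices, and hence the coefficients $R_g$, $|H_c+db|_g^2$, $\Delta_g$ and the volume form depend polynomially on analytic functions of $(g,b)$ in the appropriate Hölder topologies.

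At the critical point we have $F(\omega_c,\lambda_c,g_c,0)=0$, and the partial derivative in the first two variables is
\[
    D_{(\omega,\mu)}F\bigl|_{(\omega_c,\lambda_c,g_c,0)}(\dot\omega,\dot\mu) = \Bigl((L_c-\lambda_c)\dot\omega - \dot\mu\,\omega_c,\ 2\int_M \omega_c\dot\omega\, dV_{g_c}\Bigr).
\]
The essential input is that $\lambda_c$ is a \emph{simple} eigenvalue of $L_c$ (by the strong maximum principle, since $\omega_c>0$), so $\ker(L_c-\lambda_c)=\mathbb{R}\omega_c$. Given any target $(v,r)\in C^{0,\alpha}\times\mathbb{R}$, pairing the first component against $\omega_c$ in $L^2_{g_c}$ uniquely determines $\dot\mu = -\|\omega_c\|_{L^2}^{-2}\int_M \omega_c v\,dV_{g_c}$; the residual equation is then solvable on $(\ker(L_c-\lambda_c))^{\perp}$ by the self-adjoint Fredholm alternative, and the remaining one-dimensional ambiguity is fixed by the normalization equation. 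Hence $D_{(\omega,\mu)}F$ is a Banach-space isomorphism.

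The analytic implicit function theorem (Whittlesey) then produces an analytic map $(g,b)\mapsto(\omega_{(g,b)},\mu(g,b))$ satisfying $F\equiv 0$ on some $C^{2,\alpha}$-neighborhood of $(g_c,0)$. Since the local solution is positive and $L^2_g$-normalized by continuity, and since \Cref{L15} provides a uniform $C^{2,\alpha}$-estimate on the true minimizer, after shrinking $\mathcal{U}$ the two must coincide; thus $f_{(g,b)}=-2\log\omega_{(g,b)}$ and $\lambda(g,b)=\mu(g,b)$ are both analytic in $(g,b)$. The main technical obstacle I anticipate is not the Fredholm step but the careful verification that $(g,b)\mapsto L_{(g,b)}$ is analytic as a map into the bounded operators $C^{2,\alpha}\to C^{0,\alpha}$; this boils down to analyticity of matrix inversion applied coordinate-wise to the metric, which is classical but must be tracked carefully in the Hölder framework.
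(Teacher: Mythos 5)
Your proposal is correct, but it takes a genuinely different route from the paper's. The paper applies the (analytic) implicit function theorem directly in the variable $f$: it sets $S(g,b,f)=R-\frac{1}{12}|H|^2+2\triangle f-|\nabla f|^2$ and defines
\begin{align*}
\mathcal{L}\bigl((g,b),f\bigr)=\Bigl(S(g,b,f)-\fint_M S(g,b,f)e^{-f_c}dV_{g_c},\ \int_M e^{-f}dV_g-1\Bigr),
\end{align*}
so that zeros of $\mathcal{L}$ are precisely the normalized critical points of $\mathcal{F}(g,b,\cdot)$, and then checks that the linearization in $f$ at $(g_c,0,f_c)$, namely $\phi\mapsto\bigl(2\triangle_{f_c}\phi,\,-\int_M\phi\,e^{-f_c}dV_{g_c}\bigr)$, is an isomorphism onto the mean-zero H\"older functions times $\mathbb{R}$. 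You instead substitute $\omega=e^{-f/2}$, which turns the Euler--Lagrange equation into the \emph{linear} eigenvalue equation for the Schr\"odinger operator $-4\triangle+R-\frac{1}{12}|H|^2$, and you carry the eigenvalue $\mu$ as an extra unknown, using simplicity of the ground state (positivity of $\omega_c$) to invert the linearization. The two linearizations are conjugate, and each handles the same one-dimensional kernel (constants in the paper's picture, $\mathbb{R}\omega_c$ in yours); your version makes the spectral mechanism explicit, while the paper's avoids introducing $\mu$ by projecting onto mean-zero functions. The one place where you are noticeably terser than the paper is the identification of the IFT branch with the actual minimizer: the paper proves this by contradiction, taking $(g_i,b_i)\to(g_c,0)$ with $f_{(g_i,b_i)}\neq\mathcal{P}(g_i,b_i)$ and using \Cref{L15} together with Arzel\`a--Ascoli to force $\omega_{(g_i,b_i)}\to\omega_c$, hence $f_{(g_i,b_i)}\to f_c$, contradicting the local uniqueness furnished by the implicit function theorem. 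You cite exactly the right ingredients (\Cref{L15} and local uniqueness), but you should state explicitly that the uniform $C^{2,\alpha}$ bound plus compactness is what forces $(\omega_{(g,b)},\lambda(g,b))$ into the uniqueness neighborhood of $(\omega_c,\lambda_c)$ as $(g,b)\to(g_c,0)$; with that sentence added your argument is complete.
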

\begin{proof}
Let $S(g,b,f)=R-\frac{1}{12}|H|^2+2\triangle f-|\nabla f|^2$ and we define
\begin{align*}
    \nonumber\mathcal{L}:\quad & C^{2,\alpha}(\mathcal{M}\times \Omega^2)\times C^{2,\alpha}(M)\longrightarrow C^{0,\alpha}_{g_c}(M)\times \mathbb{R}
    \\&((g,b),f)\longmapsto (S(g,b,f)-\fint_M S(g,b,f)e^{-f_c}dV_{g_c},\int_M e^{-f}dV_g-1) 
\end{align*}
where $C^{0,\alpha}_{g_c}(M)=\{\phi\in C^{0,\alpha}(M): \int_M \phi e^{-f_c} dV_{g_c}=0\}$. Note that $\mathcal{L}$ is an analytic map and 
\begin{align*}
    \mathcal{L}(g,b,f)=(0,0)\Longleftrightarrow S(g,b,f)=\text{constant and } \int_M e^{-f}dV_g=1.
\end{align*}

We can compute that 
\begin{align*}
    dS(h,K,\phi)&=-\triangle(\tr_{g_c}h-2\phi)+\langle\nabla(\tr_{g_c}h-2\phi),\nabla f_c\rangle-\frac{1}{6}\langle dK,H_c \rangle+\divg_{f_c}\divg_{f_c} h,
    \\ d(\int_M e^{-f_c}dV_{g_c})&=\int_M (\frac{\tr_{g_c}h}{2}-\phi)e^{-f_c}dV_{g_c} \quad \text{ at $(g_c,0,f_c)$}
\end{align*}
so 
\begin{align*}
    d\mathcal{L}(0,0,\phi)|_{(g_c,0,f_c)}=(2\triangle\phi-2\langle\nabla\phi,\nabla f_c\rangle,-\int_M \phi e^{-f_c} dV_{g_c})\in C^{0,\alpha}_{g_c}(M)\times \mathbb{R}
\end{align*}
which implies that $d\mathcal{L}(0,0,\phi)|_{(g_c,0,f_c)}$ is a linear isomorphism. By the implicit function theorem, there exists a $C^{2,\alpha}$-neighborhood $\mathcal{U}$ of $(g_c,0)$ and an analytic map $\mathcal{P}:\mathcal{U}\longrightarrow C^{2,\alpha}(M)$ such that
\begin{align*}
    \mathcal{L}((g,b),\mathcal{P}(g,b))=0.
\end{align*}
Moreover, the implicit function theorem also implies that there exists a $C^{2,\alpha}$-neighborhood $\mathcal{V}$ of $f_{c}$ such that if $\mathcal{L}(g,b,f)=0$ for some $(g,b)\in\mathcal{U}$ and $f\in\mathcal{V}$ then $f=\mathcal{P}(g,b)$.

We claim that there exists a smaller neighborhood $\mathcal{V}\subset \mathcal{U}$ such that $\mathcal{P}(g,b)=f_{(g,b)}$ for all $(g,b)\in\mathcal{V}$. Suppose the claim is false, then there exists a sequence 
\[  \begin{tikzcd}
(g_i,b_i) \arrow[r, "C^{2,\alpha}"] & (g_c,0)
\end{tikzcd} \text{  such that $f_{(g_i,b_i)}\neq \mathcal{P}(g_i,b_i)$ for all $i$.}
\]
We define $\omega_i=e^{\frac{-f_{(g_i,b_i)}}{2}}$. By \Cref{L15} and the Arzela--Ascoli theorem, there exists a convergent subsequence such that
\[  \begin{tikzcd}
\omega_i \arrow[r, "C^{2,\alpha'}"] & \omega_{\infty}
\end{tikzcd} \text{  for some $\alpha'<\alpha$.}
\]
Then,
\begin{align*}
    \lambda(g_c,0)&=\int_M (R_c-\frac{1}{12}|H_c|^2)\omega_c^2+4|\nabla\omega_c|^2 dV_{g_c}
    \\&\leq \int_M (R_c-\frac{1}{12}|H_c|^2)\omega^2_{\infty}+4|\nabla\omega_{\infty}|^2dV_{g_c}=\lim_{i\to\infty} \lambda(g_i,b_i)
    \\&\leq \lim_{i\to\infty}\mathcal{F}(g_i,b_i,f_c)=\lambda(g_c,0).
\end{align*}
Therefore, $\omega_{\infty}=\omega_c$ and then $f_{(g_i,b_i)}\longrightarrow f_c$. On the other hand, $\mathcal{L}(g_i,b_i,f_{(g_i,b_i)})=0$ for all $i$ so this contradicts the implicit function theorem and we prove the claim. By using the claim, we see that 
\begin{align*}
    f_{(g,b)}=\mathcal{P}(g,b) \text{  is analytic when $(g,b)\in\mathcal{V}$}
\end{align*}
and $\lambda(g,b)=S((g,b),\mathcal{P}(g,b))$ is analytic.
\end{proof}
  
\begin{remark}\label{R15}
We can use the same idea to show that there exists a $C^{2,\alpha}$ neighborhood $\mathcal{U}$ of $(g_c,0)$ such that for any minimizer $f_{(g,b)}$ of $(g,b)\in\mathcal{U}$, we have
\begin{align*}
    \|f_{(g,b)}\|_{C^{2,\alpha}}\leq C. 
\end{align*}
\end{remark}
\begin{proof}
We observe that it is equivalent to prove that $\omega_{(g,b)}$ is bounded away from 0. If it is not true, we have a sequence 
\[  \begin{tikzcd}
(g_i,b_i) \arrow[r, "C^{2,\alpha}"] & (g_c,0)
\end{tikzcd} \text{  such that $\min\omega_{(g_i,b_i)}=\min e^{\frac{-f_{(g_i,b_i)}}{2}}\longrightarrow 0$.}
\]
However, follow the same proof in \Cref{P8}, we see that $\omega_{(g_i,b_i)}\longrightarrow\omega_c> 0$ which is a contradiction.
\end{proof}

\subsection{Lojasiewicz--Simon Inequality for $\lambda$}

Before we start to prove the Lojasiewicz--Simon inequality, we need a lemma.

\begin{lemma}\label{L16}
Let $(M,g_c,H_c, f_c)$ be a steady gradient generalized Ricci soliton with its corresponding generalized metric $\mathcal{G}_c=\mathcal{G}_c(g_c,0)$ and a background $3$-form $H_c$. We can choose a $C^{2,\alpha}$-neighborhood $\mathcal{U}$ of $(g_c,0)$ such that 
\begin{align*}
    \nonumber&\|\frac{d}{dt}\Big|_{t=0}f_{(g+th,b+tK)}\|_{C^{2,\alpha}}\leq C(\|h\|_{C^{2,\alpha}}+\|K\|_{C^{2,\alpha}})\quad (g,b)\in\mathcal{U},
    \\&  \|\frac{d}{dt}\Big|_{t=0}f_{(g+th,b+tK)}\|_{W^{2,2}}\leq C(\|h\|_{W^{2,2}}+\|K\|_{W^{2,2}})\quad (g,b)\in\mathcal{U}.
\end{align*}
\end{lemma}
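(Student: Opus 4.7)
The plan is to write $\phi\coloneqq\frac{d}{dt}|_{t=0}f_{(g+th,b+tK)}$ as the solution of an explicit linear elliptic PDE and then apply elliptic regularity uniformly on a (possibly smaller) $C^{2,\alpha}$-neighborhood of $(g_c,0)$. The minimizer $f=f_{(g,b)}$ is characterized by the two conditions $S(g,b,f)=\lambda(g,b)$ (i.e., $S$ is constant on $M$) and $\int_M e^{-f}dV_g=1$. Differentiating both equations along $(g_t,b_t)=(g+th,b+tK)$ at $t=0$, the $\phi$-dependent part of $\delta S$ arises from $2\,\delta(\triangle f)-\delta|\nabla f|^2$ and contributes precisely $2\triangle_f\phi$, while all remaining terms group into an expression $F_0(h,K)$ that is linear in $(h,K)$, involves $h$ up to two derivatives and $K$ up to one derivative, and whose coefficients are polynomial combinations of $g,\,g^{-1},\,H,\,\nabla f,\,\nabla^2 f$. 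Consequently,
\begin{equation*}
2\triangle_f\phi=-F_0(h,K)+c,\qquad \int_M \phi\,e^{-f}dV_g=\tfrac{1}{2}\int_M \tr_g h\,e^{-f}dV_g,
\end{equation*}
where the constant $c=\delta\lambda$ is determined by pairing the first equation with $e^{-f}dV_g$ and using $\int_M\triangle_f\phi\cdot e^{-f}dV_g=0$.

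The next step is to apply elliptic regularity to this PDE. Invoking \Cref{R15} I shrink $\mathcal{U}$ so that $\|f_{(g,b)}\|_{C^{2,\alpha}}\leq C$ for every $(g,b)\in\mathcal{U}$, which yields a uniform $C^{0,\alpha}$ bound on the drift coefficient $\nabla f$ of $\triangle_f$ and on every coefficient appearing in $F_0$. On the closed subspace $\{\phi:\int\phi\,e^{-f}dV=0\}$ the operator $\triangle_f$ is self-adjoint in $L^2(e^{-f}dV)$ with trivial kernel, and its lowest nonzero eigenvalue depends continuously on $(g,b)$; after a further shrinking of $\mathcal{U}$ I may assume this eigenvalue is uniformly bounded away from $0$, giving uniform $L^2\to W^{2,2}$ and $C^{0,\alpha}\to C^{2,\alpha}$ bounds for $(\triangle_f)^{-1}$ on the orthogonal complement of the constants. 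Writing $\phi=\phi_0+c_0$ with $\int\phi_0\,e^{-f}dV_g=0$, the constant $c_0=\tfrac{1}{2}\int_M \tr_g h\,e^{-f}dV_g$ is bounded by $C\|h\|_{L^2}$, while $\phi_0$ is estimated by elliptic regularity applied to $2\triangle_f\phi_0=-F_0(h,K)+c$; plugging in the obvious bounds $\|F_0(h,K)\|_{C^{0,\alpha}}\leq C(\|h\|_{C^{2,\alpha}}+\|K\|_{C^{1,\alpha}})$ and $\|F_0(h,K)\|_{L^2}\leq C(\|h\|_{W^{2,2}}+\|K\|_{W^{1,2}})$ yields the two claimed inequalities.

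The main obstacle is obtaining \emph{uniform} constants in the elliptic estimates over $\mathcal{U}$; once the analyticity from \Cref{P8} and the pointwise bound in \Cref{R15} are in place, this reduces to continuity of the spectrum of $\triangle_f$ on $L^2(e^{-f}dV)$ under $C^{2,\alpha}$ perturbations of $(g,b)$, which is standard. As a shortcut for the first inequality, one may alternatively observe that the analytic map $\mathcal{P}:\mathcal{U}\to C^{2,\alpha}(M)$ of \Cref{P8} has a continuous differential $(g,b)\mapsto D\mathcal{P}(g,b)$ taking values in bounded linear operators on $C^{2,\alpha}$, so its operator norm is uniformly bounded on any precompact subneighborhood of $(g_c,0)$; this gives the $C^{2,\alpha}$ bound at once without passing through the PDE. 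The $W^{2,2}$ inequality, however, genuinely requires the PDE approach above, since the analyticity in \Cref{P8} is formulated only in the $C^{2,\alpha}$ topology.
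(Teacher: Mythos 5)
Your proposal is correct and follows essentially the same route as the paper: differentiate the identity $\lambda(g_t,b_t)=S(g_t,b_t,f_t)$ to obtain the linear elliptic equation $2\triangle_f\phi=\delta\lambda+\triangle_f(\tr_gh)-\divg_f\divg_fh+\frac{1}{6}\langle dK,H\rangle$, bound $\delta\lambda$ via the first variation formula, and conclude by elliptic regularity using the uniform $C^{2,\alpha}$ bound on $f_{(g,b)}$ from \Cref{R15}. Your treatment of the normalization $\int_M e^{-f}dV_g=1$ and of the constant mode of $\triangle_f$ is slightly more explicit than the paper's, but the underlying argument is the same.
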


\begin{proof}
By using the first variation formula (\ref{23}), 
\begin{align*}
    \frac{d}{dt}\Big|_{t=0}\lambda(g+th,b+tK)=\int_M \Big[\langle -\Rc+\frac{1}{4}H^2-\nabla^2f,h \rangle-\frac{1}{2}\langle d^*H+i_{\nabla f}H,K\rangle \Big]e^{-f}dV_g.
\end{align*}
We can see that 
\begin{align*}
    \|\frac{d}{dt}\Big|_{t=0}\lambda(g+th,b+tK)\|_{C^{0,\alpha}}\leq C(\|h\|_{C^{2,\alpha}}+\|K\|_{C^{2,\alpha}}).
\end{align*}
On the other hand,
\begin{align*}
     \frac{\partial}{\partial t}\lambda(g+th,b+tK)&=  \frac{\partial}{\partial t}(R-\frac{1}{12}|H|^2+2\triangle f-|\nabla f|^2)
     \\&=2\triangle_f(\frac{\partial f}{\partial t})-\triangle_f(\text{tr}_g h)+\divg_f \divg_fh-\frac{1}{6}\langle dK,H \rangle.
\end{align*}
By elliptic regularity,
\begin{align*}
    \|\frac{\partial f}{\partial t}\|_{C^{2,\alpha}}&\leq C \|\triangle_f(\frac{\partial f}{\partial t})\|_{C^{0,\alpha}}
    \\&\leq C\|\frac{\partial \lambda}{\partial t}+\triangle_f(\text{tr}_g h)-\divg_f \divg_fh+\frac{1}{6}\langle dK,H \rangle\|_{C^{0,\alpha}}
    \\&\leq C(\|h\|_{C^{2,\alpha}}+\|K\|_{C^{2,\alpha}})
\end{align*}
where we use the result that $\|f\|_{C^{2,\alpha}}$ is uniformly bounded. The $W^{2,2}$-norm case follows similarly.

\end{proof}

Now, we want to prove the Lojasiewicz--Simon inequality by using \cite{MR3211041} Theorem 6.3. For completeness, let us write down their statement here.
\begin{theorem}[\cite{MR3211041} Theorem 6.3]\label{T10}
Suppose the functional $G$ satisfies the following assumptions:
\begin{enumerate}
    \item  Let $E$ be a closed subspace of $L^2$ maps to a finite-dimensional vector space and $G$ is an analytic functional defined on a neighborhood $\mathcal{O}_E$ of $0$ in $C^{2,\alpha}\cap E$.
    \item $\nabla G:\mathcal{O}_E\longrightarrow C^{0,\alpha}\cap E$ is a $C^1$ map with $\nabla G(0)=0$ and
\begin{align*}
    \|\nabla G(x)-\nabla G(y)\|_{L^2}\leq C\|x-y\|_{W^{2,2}}.
\end{align*}
\item The linearization $\mathcal{L}$ of $\nabla G$ at 0 is symmetric, bounded from $C^{2,\alpha}\cap E$ to $ C^{\alpha}\cap E$ and from $W^{2,2}\cap E $ to $ L^2\cap E$, and is Fredholm from $C^{2,\alpha}\cap E$ to $ C^{\alpha}\cap E$.

\end{enumerate}
Then, there exists a $\beta\in(0,1)$ and a neighborhood of 0 in $E$ such that
\begin{align*}
    |G(x)-G(0)|^{2-\beta}\leq \|\nabla G(x)\|^2_{L^2} \text{  for $x\in E$.}
\end{align*}
\end{theorem}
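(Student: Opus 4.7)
The plan is to reduce this infinite-dimensional analytic inequality to the classical finite-dimensional \L ojasiewicz inequality via a Lyapunov--Schmidt reduction. Since $\mathcal{L}$ is Fredholm on $E$, the kernel $K := \ker \mathcal{L}$ is finite-dimensional, and symmetry of $\mathcal{L}$ (together with the Fredholm alternative) gives the $L^2$-orthogonal decomposition $E = K \oplus K^\perp$, with $\mathcal{L}$ restricting to a topological isomorphism from $K^\perp \cap C^{2,\alpha}$ onto $K^\perp \cap C^\alpha$.

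Let $\Pi\colon E \to K$ be the $L^2$-projection onto $K$. I would define an auxiliary map $F\colon \mathcal{O}_E \to C^\alpha \cap K^\perp$ by $F(x) = (I-\Pi)\nabla G(x)$. Its linearization at $0$ is $(I-\Pi)\mathcal{L} = \mathcal{L}$ on $K^\perp$, which by the previous step is a bounded isomorphism onto $C^\alpha \cap K^\perp$. The analytic implicit function theorem in Banach spaces then produces a neighborhood $U \subset K$ of $0$ and an analytic map $\Phi\colon U \to K^\perp \cap C^{2,\alpha}$ with $\Phi(0)=0$ satisfying $F(\xi+\Phi(\xi))=0$. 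Consequently $\nabla G(\xi+\Phi(\xi)) \in K$ for every $\xi \in U$.

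Now set $g(\xi) := G(\xi + \Phi(\xi))$, an analytic function on an open neighborhood of $0$ in the finite-dimensional space $K$. By the classical \L ojasiewicz inequality there exist $C>0$ and $\theta \in (0,\tfrac12]$ with
\begin{equation*}
|g(\xi)-g(0)|^{1-\theta} \leq C\,|\nabla g(\xi)|.
\end{equation*}
A chain-rule computation, using that $(I-\Pi)\nabla G(\xi+\Phi(\xi))$ vanishes and that $\Phi(\xi)-\xi \in K^\perp$, shows $\nabla g(\xi) = \Pi\,\nabla G(\xi+\Phi(\xi)) = \nabla G(\xi+\Phi(\xi))$ under the identification of $K$ with its $L^2$-dual. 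Hence $|\nabla g(\xi)| = \|\nabla G(\xi+\Phi(\xi))\|_{L^2}$.

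The remaining step is to transfer this bound from the graph of $\Phi$ to arbitrary $x \in \mathcal{O}_E$. Writing $x = \Pi x + (x-\Pi x)$ and comparing with $\Pi x + \Phi(\Pi x)$, the deviation $w := x - \Pi x - \Phi(\Pi x) \in K^\perp$ is controlled via the isomorphism $\mathcal{L}|_{K^\perp}$ by $\|w\|_{W^{2,2}} \lesssim \|(I-\Pi)\nabla G(x)\|_{L^2}$, using the $W^{2,2}\to L^2$ Lipschitz hypothesis on $\nabla G$ and a Taylor expansion with analytic remainder along the segment joining $x$ to $\Pi x + \Phi(\Pi x)$. The same expansion yields $|G(x) - g(\Pi x)| \lesssim \|w\|_{W^{2,2}}^{2} \lesssim \|\nabla G(x)\|_{L^2}^{2}$. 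Combining with the finite-dimensional inequality applied at $\xi = \Pi x$ and setting $\beta = 2\theta$, one concludes $|G(x)-G(0)|^{2-\beta} \leq \|\nabla G(x)\|_{L^2}^2$ after absorbing cross terms. The main obstacle is the bookkeeping in this last transfer: the \L ojasiewicz exponent $\theta$ is nonconstructive, and one must carefully match how the $W^{2,2}$-Lipschitz hypothesis on $\nabla G$ interacts with the $C^{2,\alpha}$--$C^{\alpha}$ Fredholm setup so that every error term is dominated by the final exponent $2-\beta$.
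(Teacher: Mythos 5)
The paper does not prove this statement: it is quoted verbatim from Colding--Minicozzi \cite{MR3211041} and used as a black box (``for completeness, let us write down their statement here''), so there is no internal proof to compare against. That said, your Lyapunov--Schmidt reduction to the finite-dimensional Lojasiewicz inequality is precisely the strategy of the cited proof, and the architecture is sound: the splitting $E=K\oplus K^{\perp}$ with $K=\ker\mathcal{L}$ finite-dimensional, the analytic implicit function theorem producing $\Phi$ with $\nabla G(\xi+\Phi(\xi))\in K$, the identity $\nabla g(\xi)=\nabla G(\xi+\Phi(\xi))$ (the extra term $\langle\nabla G,D\Phi(\xi)v\rangle$ vanishes because $\nabla G\in K$ while $D\Phi(\xi)v\in K^{\perp}$), and the cancellation $\langle\nabla G(\Pi x+\Phi(\Pi x)),w\rangle=0$ that gives $|G(x)-g(\Pi x)|\lesssim\|w\|_{W^{2,2}}^{2}$ are all correct.

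The one step that does not follow from the hypotheses as you have invoked them is the bound $\|w\|_{W^{2,2}}\lesssim\|(I-\Pi)\nabla G(x)\|_{L^{2}}$. This requires $\mathcal{L}|_{K^{\perp}}$ to be bounded \emph{below} from $W^{2,2}\cap K^{\perp}$ to $L^{2}\cap K^{\perp}$, i.e.\ an estimate $\|u\|_{W^{2,2}}\leq C\|\mathcal{L}u\|_{L^{2}}$ for $u\perp K$. The stated hypotheses give only boundedness of $\mathcal{L}$ from $W^{2,2}$ to $L^{2}$ and Fredholmness in the H\"older scale; neither yields the lower bound in the Sobolev scale directly, and this is exactly where symmetry and Fredholmness must be combined (in the applications $\mathcal{L}$ is elliptic, so the estimate comes from elliptic regularity plus the spectral gap of a self-adjoint operator with discrete spectrum). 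A similar remark applies to the surjectivity of $\mathcal{L}|_{K^{\perp}}\colon C^{2,\alpha}\cap K^{\perp}\to C^{\alpha}\cap K^{\perp}$ needed for your implicit function theorem: one must check that the H\"older cokernel of $\mathcal{L}$ is detected by $L^{2}$-orthogonality against $K$, which again uses symmetry. These are the points on which the cited proof spends its effort; with them supplied, your final bookkeeping (absorbing $\|w\|_{W^{2,2}}^{2(2-\beta)}$ and the cross terms into $\|\nabla G(x)\|_{L^{2}}^{2}$ for $x$ close to $0$, since $2(2-\beta)>2$) does close the argument.
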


\begin{theorem}[Lojasiewicz--Simon inequality for $\lambda$ ]\label{T11}
Let $(M^n,g_c,H_c,f_c)$ be a steady generalized gradient Ricci soliton with its corresponding generalized metric $\mathcal{G}_c=\mathcal{G}_c(g_c,0)$ and a background $3$-form $H_c$. There exists a $C^{2,\alpha}$-neighborhood $\mathcal{U}$ of $(g_c,0)$ and a $\beta\in (0,1)$ such that 
\begin{align*}
    |\lambda(g,b)-\lambda(g_c,0)|^{2-\beta}\leq \|\nabla\lambda(g,H)\|_{L^2}^2=\|(\Rc-\frac{1}{4}H^2+\nabla^2f,\frac{1}{2}(d^*H+i_{\nabla f}H))\|_{L^2}^2
\end{align*}
for all $(g,b)\in\mathcal{U}$.
\end{theorem}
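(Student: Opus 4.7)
The strategy is to apply the abstract Lojasiewicz--Simon inequality \Cref{T10} on the slice $S^{f_c}_{\mathcal{G}_c}$ produced by \Cref{C2}, and then transfer the resulting inequality to a full $C^{2,\alpha}$-neighborhood of $(g_c,0)$ using the $\GDiff_H$-invariance of both $\lambda$ and $\|\nabla\lambda\|_{L^2}$. The fundamental obstacle is that $\lambda$ is gauge-invariant, so $\nabla\lambda$ vanishes identically along the orbit of $(g_c,0)$ and the linearization of $\nabla\lambda$ (essentially the second variation operator $\mathcal{N}$ of \Cref{C5}) has infinite-dimensional kernel $\im \textbf{A}'$ on all of $\Gamma(S^2M)\times\Omega^2$. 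Hence $\mathcal{N}$ cannot be Fredholm on the ambient space, and the slice is exactly the device which quotients out these gauge directions.

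Concretely, I would take $E \subset \Gamma(S^2M)\times \Omega^2$ to be the tangent space to the slice at $(g_c,0)$, namely $E=\ker(\textbf{A}'_{f_c})^*$ as described in \Cref{R3}. This is a closed subspace of the $f_c$-twisted $L^2$-sections of the finite-rank bundle $S^2M\oplus \Lambda^2 T^*M$, and the slice theorem parametrizes a $C^{2,\alpha}$-neighborhood $\mathcal{O}_E$ of $0$ in $E$ by an analytic chart onto a neighborhood of $(g_c,0)$ in $S^{f_c}_{\mathcal{G}_c}$. I then verify the three hypotheses of \Cref{T10} for $G := \lambda-\lambda(g_c,0)$ restricted to $\mathcal{O}_E$: analyticity of $G$ follows directly from \Cref{P8}; the bound $\|\nabla G(x)-\nabla G(y)\|_{L^2}\le C\|x-y\|_{W^{2,2}}$ follows by inserting the explicit formula for $\nabla\lambda$ furnished by the first variation \Cref{T6} and controlling the dependence of the minimizer $f_{(g,b)}$ using \Cref{L16}; and the Fredholm property is obtained by combining self-adjointness of $\mathcal{N}$ (\Cref{C5}) with the fact that on $E$ the gauge directions have been removed, so that the principal symbol of $\mathcal{N}|_E$ is elliptic.

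With the three hypotheses in hand, \Cref{T10} produces $\beta\in(0,1)$ and a $C^{2,\alpha}$-neighborhood $\widetilde{\mathcal{U}}$ of $0$ in $E$ such that
\begin{equation*}
|\lambda(g,b)-\lambda(g_c,0)|^{2-\beta}\le \|\nabla\lambda(g,b)\|_{L^2}^{2} \qquad \text{for all } (g,b)\in \widetilde{\mathcal{U}}\cap S^{f_c}_{\mathcal{G}_c}.
\end{equation*}
To globalize, I shrink $\widetilde{\mathcal{U}}$ and invoke part (3) of \Cref{C2}: every $(g,b)$ in a sufficiently small $C^{2,\alpha}$-neighborhood $\mathcal{U}$ of $(g_c,0)$ in $\mathcal{M}\times\Omega^2$ can be written as $\rho_{\mathcal{GM}}(F,(g_s,b_s))$ for some $F\in\GDiff_H$ and $(g_s,b_s)\in\widetilde{\mathcal{U}}\cap S^{f_c}_{\mathcal{G}_c}$. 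Because $\lambda$ is $\GDiff_H$-invariant and, by the naturality of the first variation formula together with the equivariance of the minimizer $f_{(g,b)}$, the $L^2$-norm $\|\nabla\lambda\|_{L^2}$ is also $\GDiff_H$-invariant, the inequality at $(g_s,b_s)$ passes to $(g,b)$, proving the theorem.

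The main obstacle is Step 4 above, the Fredholm property. One must carry out a symbol computation for the coupled operator $\mathcal{N} = (A+C,B+D,\tfrac{1}{2}\triangle_f)$, checking that imposing $(\textbf{A}'_{f_c})^*(h,K)=0$ fills in exactly the non-elliptic directions of the raw operator on the $(h,K)$ side (this is the analogue in our setting of the standard trick in which De~Turck / Bianchi gauge makes the linearized Ricci equation strongly elliptic). One must also verify that $\nabla\lambda$ preserves $E$ to the order needed so that the restricted map $\nabla G : \mathcal{O}_E \to C^{0,\alpha}\cap E$ is well defined. Once this elliptic-at-the-slice statement is secured, the remaining analytic estimates are a routine combination of \Cref{P8}, \Cref{R15} and \Cref{L16}.
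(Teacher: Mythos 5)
Your proposal is correct and follows essentially the same route as the paper: analyticity from \Cref{P8}, the Lipschitz bound on $\nabla\lambda$ from the first variation formula together with \Cref{L16}, and Fredholmness of the linearization obtained by restricting to the $f$-twisted slice of \Cref{C2}, where $v_{h,K}$ becomes constant and $\mathcal{N}$ reduces to an elliptic operator, before invoking \Cref{T10}. The only difference is that you spell out the final globalization from the slice to a full $C^{2,\alpha}$-neighborhood via $\GDiff_H$-invariance of $\lambda$ and $\|\nabla\lambda\|_{L^2}$, a step the paper compresses into the remark that diffeomorphism invariance lets one work on the slice.
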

\begin{proof}
By \Cref{P8}, we have a $C^{2,\alpha}$ neighborhood $\mathcal{U}$ such that $\lambda(g,b)$ is analytic which satisfies the first assumption in \Cref{T10}. Next, let us compute the $L^2$-gradient. The first variation formula  (\ref{23}) says 
\begin{align*}
    \nabla\lambda|_{(g,b)}=\Big(-(\Rc-\frac{1}{4}H^2+\nabla^2f),-\frac{1}{2}(d^*H+i_{\nabla f}H)\Big). 
\end{align*}
It is clear that $\nabla\lambda$ is a $C^1$-map and $\nabla\lambda|_{(g_c,0)}=0$.
\begin{align*}
    &\kern-1em\|\nabla\lambda|_{(g_1,b_1)}-\nabla\lambda|_{(g_2,b_2)}\|_{L^2}
    \\& \leq \| (\Rc_1-\frac{1}{4}H_1^2+\nabla^2 f_1)-(\Rc_2-\frac{1}{4}H_2^2+\nabla^2 f_2)\|_{L^2}
    +\|-\frac{1}{2}(d^*H_1+i_{\nabla f_1}H_1)+\frac{1}{2}(d^*H_2+i_{\nabla f_2}H_2)\|_{L^2}
    \\&\leq C(\|g_2-g_1\|_{W^{2,2}}+\|b_2-b_1\|_{W^{2,2}})
\end{align*}
where we use the Taylor series expansion and \Cref{L16}. Finally, we check the linearization of $\nabla\lambda$ at $(g_c,0)$.

 Since $f$ is $\Isom({\mathcal{G}_c})$-invariant ( $(g_c,H_c,f_c)$ is a steady gradient generalized Ricci soliton so $f$ is invariant) and the diffeomorphism invariance of $\lambda$, it suffices to consider the $f$-twisted generalized slice $\mathcal{S}^f_{\mathcal{G}_c}\cap \mathcal{U}$. On the slice, we observe that 
\begin{align*}
    \divg_{f_c}\divg_{f_c} h&=\nabla_i(\divg_{f_c}h)_i-\langle \nabla f_c,\divg_{f_c}h\rangle=\nabla_i(\frac{1}{2}K_{ab}H_{abi})-\frac{1}{2}K_{ab}H_{abi}\nabla_if_c=\frac{1}{6}\langle dK,H\rangle  
\end{align*}
which implies that $v_{h,K}$ is a constant. Then, the operator $\mathcal{N}$ in (\ref{26}), related to the second variation formula, reduces to
\begin{align*}
      \nonumber\mathcal{N}:\quad &\Gamma(S^2M)\times \Omega^2  \longrightarrow \Gamma(S^2M)\times \Omega^2 
    \\&  \quad (h,K)\longmapsto (A(h)+C(K),B(K)+D(h)).
\end{align*} 
Here,
\textbf{\begin{align*}
    A(h)_{ij}&= \frac{1}{2}\triangle_L h_{ij}-\frac{1}{2}h_{ac}H_{iab}H_{jcb}-\frac{1}{2}h_{ik}\nabla_j\nabla_k f_c-\frac{1}{2}h_{jk}\nabla_i\nabla_k f_c-\nabla_lf_c H_{lij},
    \\ B(K)_{ij}&=\frac{1}{2}\triangle K_{ij}+\frac{1}{2}K_{jl}\nabla_i\nabla_l f_c+\frac{1}{2}K_{il}\nabla_j\nabla_lf_c-\frac{1}{2}\nabla_lK_{ij}\nabla_lf_c,
    \\ C(K)_{ij}&=\frac{1}{4}((dK)_{iab}H_{jab}+(dK)_{jab}H_{iab}),
    \\ D(h)_{ij}&=-\frac{1}{2}h_{ab}\nabla_aH_{bij}+\frac{1}{2}\nabla_ah_{ib}H_{baj}+\frac{1}{2}\nabla_ah_{jb}H_{bia}
\end{align*}}
and $(h,K)\in\ker(\textbf{A}'_{f_c})^*$ is defined in (\ref{10}).
Thus, the linearization of $\nabla\lambda$ is 
\begin{align*}
    L_{\nabla\lambda}=(A(h)+C(K),B(K)+D(h)).
\end{align*}
The second order part of $ L_{\nabla\lambda}$ is a Laplace operator so it is an elliptic operator. By elliptic regularity, $ L_{\nabla\lambda}$ is bounded and $ L_{\nabla\lambda}$ is a Fredholm operator. Also, it is symmetric by the integration by parts so $\lambda$ satisfies all assumptions in \Cref{T10} and we prove the Lojasiewicz--Simon inequality.
\end{proof}

\subsection{Estimates on GRF}

In this subsection, we discuss two key lemmas which will be used to prove the dynamical stability.

\begin{lemma}[Estimate for $t\leq 1$]\label{L17} 
Let $\mathcal{G}_c=\mathcal{G}_c(g_c,0)$ be a generalized metric on a smooth manifold $M$ with a background 3-form $H_c$. Suppose $k\geq 2$. For each $C^{k}$ neighborhood $\mathcal{U}$ of $(g_c,0)$ in $\mathcal{M}\times\Omega^2$, there exists a $C^{k+2}$ neighborhood $\mathcal{V}$ of $(g_c,0)$ such that  the generalized Ricci flow starting at any $(g,b)\in\mathcal{V}$ stays in $\mathcal{U}$ for all $t\in[0,1]$. 
\end{lemma}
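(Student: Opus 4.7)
The lemma is a short-time stability statement for the quasilinear parabolic system of GRF, and I would reduce it to classical parabolic Schauder theory via the DeTurck trick.

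First, as written, the GRF equations (\ref{13}) are only weakly parabolic in $(g,b)$ because of the diffeomorphism invariance of the Ricci tensor. To cure this I would apply a DeTurck modification with $g_c$ as a background: set $W^k(g) := g^{ij}\bigl(\Gamma^k_{ij}(g) - \Gamma^k_{ij}(g_c)\bigr)$ and study the modified system
\begin{align*}
\partial_t g = -2\Rc + \tfrac{1}{2}H^2 + L_W g, \qquad \partial_t b = -d^* H + L_W b,
\end{align*}
which is strictly parabolic of second order in $(g,b)$. Solutions of the modified flow are related to solutions of the original GRF by a time-dependent diffeomorphism $\varphi_t$ generated by $-W(g_t)$, with $\varphi_0 = \mathrm{id}$.

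Next, I would invoke standard parabolic Schauder theory for this strictly parabolic quasilinear system, following Hamilton's DeTurck argument for Ricci flow (extended to GRF in \cite{GRF}). For initial data $(g,b)$ in a bounded $C^{k+2}$-neighborhood of $(g_c,0)$ with $k\ge 2$, this yields a uniform short-time existence interval $[0,T_0]$, together with the continuous-dependence estimate
\begin{align*}
\|(g_t,b_t) - (\tilde g_t, \tilde b_t)\|_{C^{k}_{g_c}} \le C\,\|(g,b) - (g_c,0)\|_{C^{k+2}_{g_c}}, \qquad t\in[0,T_0],
\end{align*}
where $(\tilde g_t, \tilde b_t)$ is the modified flow starting at $(g_c,0)$ and $C = C(k,g_c,H_c,T_0)$.

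Finally, I would extend to $[0,1]$ by iteration: cover $[0,1]$ by finitely many sub-intervals of length $\le T_0$ and apply the previous step successively, shrinking $\mathcal V$ at each stage so that the $C^{k+2}$-distance at the start of each sub-interval lies in the prescribed radius. To convert the bound on the modified flow into a bound on the original GRF, I would undo the DeTurck diffeomorphism: since $W(g_t)$ is controlled by $\|g_t - g_c\|_{C^1}$ and hence small, $\varphi_t$ remains close to the identity in $C^{k+1}_{g_c}$ throughout $[0,1]$, so composition with $\varphi_t^{\pm 1}$ preserves $C^k$-closeness.

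The main obstacle is the standard loss of two derivatives in parabolic Schauder estimates, which is exactly matched by the gap between the $C^{k+2}$ hypothesis on initial data and the $C^k$ conclusion on the solution. Apart from careful bookkeeping of norms under the gauge transformation, the argument requires no ingredient beyond classical parabolic theory.
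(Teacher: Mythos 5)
Your route (DeTurck gauge plus parabolic Schauder continuous dependence) is not the one the paper takes, and as written it has a genuine gap. The paper never introduces a gauge: it takes the maximal time $T$ for which the unmodified flow stays $\epsilon$-close to $(g_c,0)$ in $C^k_{g_c}$, uses the evolution equations for $|\nabla^l Rm|^2$ and $|\nabla^l H|^2$ from \cite{GRF} Section 5.3 together with the maximum principle to bound $|\nabla^{k}Rm|$ and $|\nabla^{k+1}H|$ on $[0,\min(T,1)]$ in terms of the $C^{k+2}$-norm of the initial data (this is exactly where the two extra derivatives are spent, via the $\nabla^{l+2}H^2 \ast \nabla^l Rm$ term in the curvature evolution, not via a Schauder loss), and then integrates the resulting $C^k$-bound on the velocity $(-2\Rc+\tfrac{1}{2}H^2,-d^*H)$ over a time interval of length at most $1$ to contradict $T\le 1$.

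The gap in your argument is in what the continuous-dependence estimate is compared against. It controls $\|(g_t,b_t)-(\tilde g_t,\tilde b_t)\|_{C^k}$, where $(\tilde g_t,\tilde b_t)$ is the modified flow \emph{starting at} $(g_c,0)$ --- not the distance of $(g_t,b_t)$ to the point $(g_c,0)$, which is what the lemma asserts. The lemma does not assume $(g_c,0)$ is stationary for the (modified) flow, so $\|(\tilde g_t,\tilde b_t)-(g_c,0)\|_{C^k}$ on $[0,1]$ is a quantity of definite size governed by $-2\Rc_{g_c}+\tfrac{1}{2}H_c^2$ and $d^*H_c$; it cannot be made small by shrinking $\mathcal{V}$, so the triangle inequality does not place $(g_t,b_t)$ in an arbitrarily small $\mathcal{U}$. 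The same issue undermines your gauge-removal step: $W(g_t)$ is small only if $g_t$ stays $C^1$-close to $g_c$, which is a weak form of the conclusion you are trying to prove and which fails along the reference flow itself, so you cannot conclude that $\varphi_t$ stays near the identity in $C^{k+1}$. To close the gap you must produce a direct smallness (or at least integrable-in-time) bound on the velocity of the actual flow, as the paper does, or else isolate the stationarity hypothesis on $(g_c,0)$ under which your comparison argument applies; note that the paper's own reduction to ``pick $\delta$ small so that the velocity is less than $\epsilon/8$'' quietly relies on the same point, so making it explicit is where the real content of the lemma lies.
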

\begin{proof}
First, w.l.o.g., we may suppose $\mathcal{U}=B^k_\epsilon$ is an $\epsilon$-ball of $(g_c,0)$ with respect to the $C^k_{g_c}$-norm ($C^k$-norm with the derivatives depending on the metric $g_c$).  Let $(g(t),b(t))$ be the generalized Ricci flow starting at $(g,b)$ and suppose that $T\in[0,\infty]$ be the maximal time such that $(g(t),b(t))$ exists and 
\begin{align*}
    \|(g(t)-g,b(t))\|_{C^{k}_{g_c}}<\epsilon. 
\end{align*}

We claim that we can pick $\delta$ small enough such that $\delta<\frac{\epsilon}{4}$ and 
\begin{align*}
    \|(-2\Rc(t)+\frac{1}{2}H^2(t),-d^* H(t))\|^2_{C^k_{g(t)}}<\frac{\epsilon}{8} \quad \text{  provided that  } \|(g-g_c,b)\|_{C^{k+2}_{g_c}}<\delta \text{  and  } t\leq 1.
\end{align*}
According to the assumption $\|(g(t)-g_c,b(t))\|_{C^{k}_{g_c}}<\epsilon $, we see that
\begin{align*}
    \sup|\nabla^i Rm|\text{   and   } \sup|\nabla^{i+1}H| \text{  are bounded for $i\leq k-2$.}
\end{align*}
In \cite{GRF} Section 5.3, we have the evolution equations.
\begin{align*}
    (\frac{\partial}{\partial t}-\triangle)|\nabla^l Rm|^2_{g(t)}&=-2|\nabla^{l+1}Rm|^2_{g(t)}+\nabla^{l+2}H^2*\nabla^l Rm+\sum_{j=0}^l \nabla^j(Rm+H^2)*\nabla^{l-j}Rm*\nabla^lRm,
    \\(\frac{\partial}{\partial t}-\triangle)|\nabla^l H|^2_{g(t)}&=-2|\nabla^{l+1}H|^2_{g(t)}+\sum_{j=0}^l\nabla^{j}Rm*\nabla^{l-j} H*\nabla^kH+\sum_{j=1}^l \nabla^jH^2*\nabla^{l-j}H*\nabla^lH. 
\end{align*}
By the Cauchy--Schwarz inequality, we can see that 
\begin{align*}
    (\frac{\partial}{\partial t}-\triangle)(|\nabla^{k-1} Rm(t)|^2_{g(t)}+|\nabla^kH(t)|^2_{g(t)})\leq C_1(|\nabla^{k-1} Rm(t)|^2_{g(t)}+|\nabla^kH(t)|^2_{g(t)})+C_2.
\end{align*}
The maximum principle implies  
\begin{align*}
    &|\nabla^{k-1} Rm(t)|^2_{g(t)}+|\nabla^kH(t)|^2_{g(t)}\leq \widetilde{C}_1(T,\epsilon,\|(g-g_c,b)\|_{C^{k+2}_{g_c}}).
\end{align*}
Using the same argument, we deduce that 
\begin{align*}
    |\nabla^{k} Rm(t)|^2_{g(t)}+|\nabla^{k+1}H(t)|^2_{g(t)}\leq \widetilde{C}_2(T,\epsilon,\|(g-g_c,b)\|_{C^{k+2}_{g_c}})
\end{align*}
and then we pick $\delta$ small enough such that  
$\delta<\frac{\epsilon}{4}$ and 
\begin{align*}
    \|(-2\Rc(t)+\frac{1}{2}H^2(t),-d^* H(t))\|^2_{C^k_{g(t)}}<\frac{\epsilon}{8} \text{  when $t\leq 1$.}
\end{align*}

Next, we suppose $\epsilon$ is small enough such that the $C^k$-norms with respect to $g(t)$ and $g_c$ differ at most by a factor of 2. If $T\leq 1$
\begin{align*}
    \|(g(T)-g_c,b(T))\|_{C^k_{g_c}}&\leq \|(g-g_c,b)\|_{C^k_{g_c}}+\int_0^T  \frac{d}{dt}\|(g(t)-g,b(t))\|_{C^k_{g_c}} dt
    \\&\leq\|(g-g_c,b)\|_{C^k_{g_c}}+2\int_0^T  \|(-2\Rc(t)+\frac{1}{2}H^2(t),-d^*H(t))\|_{C^k_{g(t)}} dt\leq \frac{\epsilon}{2}
\end{align*}
which is a contradiction.
\end{proof}

\begin{lemma}[\cite{GRF} Theorem 5.18. Estimate for $t\geq 1$] \label{L18}
 Suppose $(g(t),b(t))$ is a solution to the generalized Ricci flow on $[0,T)$ with $1\leq T$ satisfying 
 \begin{align*}
     \sup(|Rm|+|\nabla H|+|H|^2)\leq K \quad t\in[0,T), \text{  for some constant $K$}.
 \end{align*}
 
Given $l\in\mathbb{N}$, there exists constant $C=C(n,l,K)$ such that
\begin{align*}
    \sup(|\nabla^lRm|+|\nabla^{l+1} H|+|H||\nabla^l H|)\leq C, \quad t\in[1,T).
\end{align*}
\end{lemma}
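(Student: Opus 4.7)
The plan is to argue by induction on $l$, mirroring Shi's classical interior derivative estimate but adapted to handle the coupling between curvature and torsion in the evolution equations for generalized Ricci flow. The base case $l=0$ is the given hypothesis. For the inductive step, I assume the estimates for all $j < l$ on $[1,T)$; the goal is to produce uniform bounds on $|\nabla^l Rm|$ and $|\nabla^{l+1} H|$.

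First I would recall the schematic evolution equations stated in the proof of \Cref{L17}:
\begin{align*}
(\partial_t - \triangle)|\nabla^l Rm|^2 &= -2|\nabla^{l+1}Rm|^2 + \nabla^{l+2}H^2 * \nabla^l Rm \\
&\kern2em + \sum_{j=0}^{l} \nabla^j(Rm + H^2) * \nabla^{l-j} Rm * \nabla^l Rm, \\
(\partial_t - \triangle)|\nabla^{l+1} H|^2 &= -2|\nabla^{l+2} H|^2 + \sum \nabla^j Rm * \nabla^{l+1-j} H * \nabla^{l+1} H \\
&\kern2em + \sum \nabla^j H^2 * \nabla^{l+1-j} H * \nabla^{l+1} H.
\end{align*}
The main obstacle, and the reason a scalar Shi argument applied to $|\nabla^l Rm|^2$ alone does not close, is that $\nabla^{l+2}H^2$ expands via Leibniz to include the term $H * \nabla^{l+2} H$, whose order in $H$ exceeds what induction on lower derivatives alone controls. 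The only way to absorb this is to couple the evolution of $|\nabla^l Rm|^2$ with that of $|\nabla^{l+1}H|^2$, so that the good term $-2|\nabla^{l+2}H|^2$ from the second equation can dominate the bad cross term via Cauchy--Schwarz, using the pointwise bound $|H|^2 \leq K$.

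Concretely, I would introduce the Shi-type test function
\[
F \coloneqq t\bigl(|\nabla^l Rm|^2 + |\nabla^{l+1} H|^2\bigr) + A\bigl(|\nabla^{l-1} Rm|^2 + |\nabla^{l} H|^2\bigr),
\]
where $A = A(n, l, K)$ is to be chosen large. Computing $(\partial_t - \triangle)F$ and bounding each dangerous cross term by Cauchy--Schwarz (for instance, $|H * \nabla^{l+2} H * \nabla^l Rm| \leq \epsilon |\nabla^{l+2} H|^2 + C_\epsilon K |\nabla^l Rm|^2$), and using the inductive control on $|\nabla^j Rm|$ and $|\nabla^{j+1}H|$ for $j < l$, I expect an inequality of the form
\[
(\partial_t - \triangle) F \leq -2A\bigl(|\nabla^l Rm|^2 + |\nabla^{l+1}H|^2\bigr) + C_1\bigl(|\nabla^l Rm|^2 + |\nabla^{l+1}H|^2\bigr) + C_2,
\]
where $C_1, C_2$ depend only on $n, l, K, A$. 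Choosing $A$ large enough so that $C_1 \leq A$ gives $(\partial_t - \triangle) F \leq C_2$. Since the inductive hypothesis bounds $F(0) \leq A \cdot C(n,l,K)$, the maximum principle yields $F \leq C_3(n,l,K)$ uniformly on $[0,T)$. At any $t \in [1, T)$ we read off
\[
|\nabla^l Rm|^2 + |\nabla^{l+1}H|^2 \leq F/t \leq C_3,
\]
which is the required estimate. Finally, the bound $|H||\nabla^l H| \leq K^{1/2} \cdot C_3^{1/2}$ is immediate from the pointwise bound on $|H|$ combined with the derivative estimate on $|\nabla^l H|$ obtained at the $(l-1)$st step of the induction (or directly from the hypothesis when $l = 0$), completing the proof.
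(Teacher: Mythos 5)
First, note that the paper does not prove this lemma at all: it is imported verbatim as \cite{GRF} Theorem 5.18, so there is no in-paper argument to compare against. Your proposal is the standard Bando--Shi derivative estimate adapted to the coupled system, and your central structural observation is correct and essential: the term $\nabla^{l+2}H^2 * \nabla^l Rm$ contains $H * \nabla^{l+2}H * \nabla^l Rm$, which can only be absorbed by coupling with the good term $-2|\nabla^{l+2}H|^2$ coming from the evolution of $|\nabla^{l+1}H|^2$, using $|H|^2\leq K$ and Cauchy--Schwarz. That is the right skeleton.

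However, as written the argument has a genuine gap in its time localization, in two places. First, for $l\geq 2$ the quantity $F(0)=A\bigl(|\nabla^{l-1}Rm|^2+|\nabla^{l}H|^2\bigr)(0)$ is \emph{not} controlled: the hypothesis bounds no derivatives of the initial data, and your inductive conclusion only holds on $[1,T)$, not at $t=0$. Avoiding any assumption on initial derivatives is precisely the point of the $t$-weight in Shi's trick, so you cannot invoke the induction at $t=0$. Second, the absorption step fails globally in time: expanding $(\partial_t-\triangle)F$ you actually get a bad term of size $t\,C_1\bigl(|\nabla^l Rm|^2+|\nabla^{l+1}H|^2\bigr)$ (the reaction terms in the evolution of $G_l:=|\nabla^l Rm|^2+|\nabla^{l+1}H|^2$ are multiplied by the weight $t$), and this cannot be dominated by $-2A\,G_l$ with a fixed $A$ once $t>2A/C_1$. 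Both defects are repaired by the standard device of running the argument on sliding unit intervals: for each $t_0$ in a nested family of starting times $\beta_{l}=1-2^{-l}$, set $F=(t-t_0)G_l+A\,G_{l-1}$ on $[t_0,\min(t_0+1,T))$, so that the weight is at most $1$, $F(t_0)=A\,G_{l-1}(t_0)$ is bounded by the inductive estimate on $[\beta_{l-1},T)$, and evaluating at times $t-t_0\geq 2^{-l}$ yields $G_l\leq C$ on $[\beta_l,T)\supset[1,T)$. Also note that even on a fixed interval the maximum principle applied to $(\partial_t-\triangle)F\leq C_2$ gives $F\leq F(t_0)+C_2(t-t_0)$, not a uniform bound as you stated; the conclusion survives because the weight $t-t_0$ is bounded below on the times you evaluate. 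The final step, $|H||\nabla^l H|\leq K^{1/2}|\nabla^l H|$ with $|\nabla^l H|$ controlled at stage $l-1$, is fine.
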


\subsection{Dynamical Stability and Instability}

Now, we start to prove the main theorem.
\begin{theorem}\label{T12}
Let $(M^n,g_c,H_c,f_c)$ be a steady generalized gradient Ricci soliton with its corresponding generalized metric $\mathcal{G}_c=\mathcal{G}_c(g_c,0)$ and a background $3$-form $H_c$. Suppose $(M^n,g_c,H_c,f_c)$ is linearly stable and $k\geq 3$. Then, for every $C^k$-neighborhood $\mathcal{U}=B^k_\epsilon$ of $(g_c,0)$ with $\epsilon$ small enough, there exists some $C^{k+2}$-neighborhood $\mathcal{V}$ such that for any GRF $(g(t),b(t))$ starting in $\mathcal{V}$, we have a family of diffeomorphisms $\{\varphi_t\}$ satisfying
\begin{align*}
    \|(\varphi^*_tg_t,\varphi^*_tH_t)-(g_c,H_c)\|_{C^{k-1}_{g_c}}<\epsilon \quad \text{ for all $t$}
\end{align*}
and $(\varphi^*_t g_t,\varphi^*H_t)$ converges to $(g_\infty,H_\infty)$ of polynomial rate with $\lambda(g_c,H_c)=\lambda(g_\infty,H_\infty)$.
\end{theorem}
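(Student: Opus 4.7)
The plan is a standard Lojasiewicz--Simon convergence argument for the gradient-flow interpretation of the GRF, carried out on the $f_c$-twisted generalized slice supplied by Theorem \ref{T} (equivalently Theorem \ref{C2}). All four key ingredients are already in place in the excerpt: the monotonicity $\dot\lambda=\|\nabla\lambda\|_{L^2}^2$ along GRF (Corollary \ref{C4}), the Lojasiewicz--Simon inequality (Theorem \ref{T11}), the short-time estimate (Lemma \ref{L17}), and the Shi-type long-time estimate (Lemma \ref{L18}). The overall strategy mirrors the Einstein and Ricci-flat cases treated in \cite{H1,MR2219268,Kroencke2014,K4}, with the extra bookkeeping needed to absorb the $\GDiff_{H_c}$ gauge freedom and the $e^{-f_c}\,dV_{g_c}$-weighted inner product.

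Step 1 (gauge fixing). Because $\nabla\lambda$ vanishes in every $\GDiff_{H_c}$-direction, Theorem \ref{T11} by itself cannot control the flow. Given a GRF $(g_t,b_t)$ starting in a small $C^{k+2}$-ball $\mathcal{V}$ around $(g_c,0)$, I would construct a time-dependent gauge $(\varphi_t,B_t)\in\GDiff_{H_c}$, solving a harmonic-map-heat-flow type equation adapted to $(g_c,f_c)$, so that the transformed pair
\begin{align*}
(\tilde g_t,\tilde b_t)\ =\ \rho_{\mathcal{GM}}((\varphi_t,B_t),(g_t,b_t))
\end{align*}
lies in the slice $S^{f_c}_{\mathcal{G}_c}$. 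By $\GDiff_{H_c}$-invariance of $\lambda$ this does not change the functional, and $(\tilde g_t,\tilde b_t)$ is then a gauge-fixed GRF in the sense of Definition \ref{D11}. Applying Lemma \ref{L17} to the gauge-fixed equation keeps the modified trajectory in a small $C^{k-1}$-ball around $(g_c,0)$ for $t\in[0,1]$, once $\mathcal{V}$ is small enough.

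Step 2 (Lojasiewicz iteration). Define $u(t)=\lambda(g_c,H_c)-\lambda(\tilde g_t,\tilde H_t)$. Along the gauge-fixed flow $\dot u=-\|\nabla\lambda\|_{L^2}^2$, while Theorem \ref{T11} gives $|u|^{2-\beta}\leq\|\nabla\lambda\|_{L^2}^2$. A finite-time blow-up argument for the ODE $\dot v\geq v^{2-\beta}$ rules out the scenario $\lambda(t)>\lambda_c$ (since the trajectory stays in a region where $\lambda$ is bounded), so $u(t)\geq 0$ and the classical trick produces polynomial decay $u(t)\leq C(1+t)^{-1/(1-\beta)}$ together with the $L^2$-length estimate
\begin{align*}
\int_0^\infty \|\nabla\lambda|_{(\tilde g_t,\tilde b_t)}\|_{L^2}\,dt\ \leq\ \tfrac{2}{\beta}\,u(0)^{\beta/2}.
\end{align*}
On the slice $\|\partial_t(\tilde g_t,\tilde b_t)\|_{L^2}$ is controlled by $\|\nabla\lambda\|_{L^2}$, so this bounds the $L^2$-length of the trajectory itself. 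Combined with Lemma \ref{L18} and interpolation, the $L^2$-length bound upgrades to a $C^{k-1}$-length bound, so $(\tilde g_t,\tilde H_t)$ converges in $C^{k-1}$ to a limit $(g_\infty,H_\infty)$ at polynomial rate; since the limit is a critical point lying in the Lojasiewicz neighborhood, Theorem \ref{T11} forces $\lambda(g_\infty,H_\infty)=\lambda(g_c,H_c)$. A standard continuity/bootstrap argument, shrinking $\mathcal{V}$ if necessary, ensures the trajectory never leaves the $C^{k-1}$-ball of radius $\epsilon$.

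The chief technical obstacle is Step 1: producing a regular family $(\varphi_t,B_t)$ on $[0,\infty)$ that keeps $(\tilde g_t,\tilde b_t)$ in the $f_c$-twisted slice while $\varphi_t$ stays close to the identity. Concretely this is a long-time existence and regularity theorem for a parabolic system whose coefficients depend on the evolving GRF; the weighted orthogonality it must enforce is exactly the one for which the modified slice theorem \ref{C2} was built, and which underlies the Fredholm property of the linearization of $\nabla\lambda$ used implicitly in Theorem \ref{T11}. Once this gauge fix is in place, the remainder of the proof is the formal extension of the Ricci-flat and Einstein arguments of Haslhofer--M\"uller and Kr\"oncke to the present setting, with the closed 2-form $b$ evolving alongside the metric and the background $H_c$ supplying the coupling.
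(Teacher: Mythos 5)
Your skeleton is the right one — short-time estimate (\Cref{L17}), Lojasiewicz--Simon (\Cref{T11}), monotonicity of $\lambda$, interpolation against the Shi-type bounds of \Cref{L18} to upgrade an $L^2$-length bound to a $C^{k-1}$-length bound, and then polynomial-rate convergence — and your Step 2 is essentially the paper's computation with $\theta=1-\alpha(1+\beta)>0$. The genuine gap is Step 1, which you yourself flag as ``the chief technical obstacle'' and leave entirely unproved: you propose to build $(\varphi_t,B_t)$ by a harmonic-map-heat-flow-type parabolic system forcing the transformed flow into the slice $S^{f_c}_{\mathcal{G}_c}$, and the long-time existence, regularity, and closeness-to-identity of that gauge is exactly the hard part you do not supply. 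As written, the proof does not close.

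The paper sidesteps this problem entirely, and the fix is much simpler than a slice projection. For $t\geq 1$ it pulls the GRF back by the diffeomorphisms $\psi_t$ generated by $X(t)=-\nabla_{g_t}f_{(g_t,H_t)}$, where $f_{(g_t,H_t)}$ is the minimizer realizing $\lambda$ (analytic in $(g,b)$ by \Cref{P8}, with uniform $C^{2,\alpha}$ control from \Cref{L15} and \Cref{R15}). The resulting modified flow satisfies
\begin{align*}
\frac{\partial \widetilde g}{\partial t}=-2\Bigl(\widetilde{\Rc}-\tfrac14\widetilde H^2+\nabla^2\widetilde f\Bigr),\qquad
\frac{\partial \widetilde b}{\partial t}=-\bigl(d^*\widetilde H+i_{\nabla\widetilde f}\widetilde H\bigr),
\end{align*}
so its velocity \emph{is} the $L^2(e^{-f}dV)$-gradient of $\lambda$ up to constants, and $\frac{d}{dt}\lambda$ is directly comparable to $\|\partial_t(\widetilde g,\widetilde b)\|_{L^2}^2$ with no slice needed; the slice theorem enters only once, inside the proof of \Cref{T11}, to obtain the Fredholm property of the linearization. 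Relatedly, your diagnosis that ``\Cref{T11} by itself cannot control the flow because $\nabla\lambda$ vanishes in $\GDiff_{H_c}$-directions'' misidentifies the obstruction: the issue is not degeneracy of $\nabla\lambda$ but that the unmodified GRF velocity $(-2\Rc+\tfrac12H^2,-d^*H)$ differs from $-\nabla\lambda$ precisely by the Lie-derivative terms $L_{\nabla f}g$ and $i_{\nabla f}H$; adding back exactly those terms is the whole content of the gauge fix. If you replace your Step 1 by this explicit choice of $\varphi_t$, the rest of your argument goes through as you describe.
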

\begin{proof}
By \Cref{T11} and \Cref{L17}, we can take $\epsilon$ small enough such that in the $C^k_{g_c}$ neighborhood $\mathcal{U}=B^k_\epsilon$ of $(g_c,0)$ we have the following two properties.
\\\begin{itemize}
    \item The Lojasiewicz--Simon inequality 
\begin{align}
    |\lambda(g,H)-\lambda(g_c,H_c)|^\alpha\leq \|(\Rc-\frac{1}{4}H^2+\nabla^2f,\frac{1}{2}(d^*H+i_{\nabla f}H))\|_{L^2}  \text{ holds  for some $\alpha\in(\frac{1}{2},1)$}. \label{40}
\end{align}
\item We can choose a smaller $C^{k+2}_{g_c}$-neighborhood $\mathcal{V}$ such that the GRF starting in $\mathcal{V}$ will stay in $B^k_{\frac{\epsilon}{4}}$ up to time 1.
\end{itemize}

Assume $T\geq 1$ be the maximal time such that for any solution of GRF starting in $\mathcal{V}$, there exists a family of diffeomorphisms $\{\varphi_t\}$ such that 
\begin{align*}
    \|(\varphi^*_tg_t,\varphi^*_tH_t)-(g_c,H_c)\|_{C^{k-1}_{g_c}}<\epsilon \quad \text{ for all $t\leq T$.}
\end{align*}

Let $(g(t),b(t))$ be a solution of the generalized Ricci flow starting with $(g(0),b(0))\in\mathcal{V}$. We define the modified flow $(\widetilde{g}(t),\widetilde{b}(t))$ starting with $(g(0),b(0))$ as follows.
\begin{itemize}
    \item For $t\leq 1$: Define $(\widetilde{g}(t),\widetilde{b}(t))=(g(t),b(t))$.
    \item For $t\geq 1$: We define $(\widetilde{g}(t),\widetilde{b}(t))$ by the $(-\nabla_{g_t}f_{(g_t,H_t)},0)$-gauge fixed generalized Ricci flow (\ref{14}). More precisely, 
\begin{align*}
    &\frac{\partial \widetilde{g}}{\partial t}=-2(\widetilde{\Rc}-\frac{1}{4}\widetilde{H}^2+\nabla^2\widetilde{f}), \quad \widetilde{g}(1)=g(1),
    \\&\frac{\partial \widetilde{b}}{\partial t}=-(d^*\widetilde{H}+i_{\nabla\widetilde{f}}\widetilde{H}), \quad \widetilde{b}(1)=b(1). 
\end{align*}
\end{itemize}

Denote $\psi_t$ the diffeomorphism generated by $X(t)=-\nabla_{g_t}f_{(g_t,H_t)}$ and $\psi_1=Id$, the modified flow $(\widetilde{g},\widetilde{H})$ can also be expressed as
\begin{align}
    \widetilde{g}(t)=\begin{cases}g(t) & t\in[0,1] \\ \psi^*_t g(t) & t\geq 1
    \end{cases}, \quad \widetilde{H}(t)=\begin{cases}H(t) & t\in[0,1] \\ \psi^*_t H(t) & t\geq 1
    \end{cases}.  \label{41}
\end{align}

Let $\widetilde{T}\geq 1$ be the maximal time such that the modified flow $(\widetilde{g},\widetilde{H})$ satisfies
\begin{align*}
    \|(\widetilde{g},\widetilde{H})-(g_c,H_c)\|_{C^{k-1}_{g_c}}<\epsilon, \quad \text{ for all $t\leq \widetilde{T}$ ($\widetilde{T}\leq T$ by definition}). 
\end{align*}
Recall the Hamilton's interpolation theorem for tensor $S$ (\cite{10.4310/jdg/1214436922} Corollary 12.7), there exists a constant $C$ such that
\begin{align*}
    \int_M |\nabla^i S|^2 dV_g\leq C(\int_M |\nabla^n S|^2 dV_g)^{\frac{i}{n}}(\int_M|S|^2 dV_g)^{1-\frac{i}{n}}\quad \text{for $0\leq i\leq n$, $n=\dim M$}. 
\end{align*}
Using the interpolation theorem and \Cref{L18}, we can find a $\beta\in(0,1)$ as small as we want such that when $t\geq 1$,
\begin{align*}
    \|(\frac{\partial \widetilde{g}}{\partial t},\frac{\partial \widetilde{H}}{\partial t})\|_{C^{k-1}_{g(t)}}&\leq   \|(\frac{\partial \widetilde{g}}{\partial t},\frac{\partial \widetilde{b}}{\partial t})\|_{C^{k}_{g(t)}}
    \\&\leq C  \|(\frac{\partial \widetilde{g}}{\partial t},\frac{\partial \widetilde{b}}{\partial t})\|_{W^{l,2}_{g(t)}} \quad \text{(pick $l$ large enough and use the Sobolev embedding)}
    \\&\leq   C\|(\frac{\partial \widetilde{g}}{\partial t},\frac{\partial \widetilde{b}}{\partial t})\|^{1-\beta}_{L^2_{g(t)}}. 
\end{align*}

By picking this $\beta$ sufficiently small, we have  $\theta=1-\alpha(1+\beta)>0$.  Hence,
\begin{align*}
    &\kern-1em-\frac{d}{dt}|\lambda(\widetilde{g}(t),\widetilde{H}(t))-\lambda(g_c,H_c)|^\theta
    \\&=\theta |\lambda(\widetilde{g}(t),\widetilde{H}(t))-\lambda(g_c,H_c)|^{\theta-1}\frac{d}{dt}\lambda(\widetilde{g}(t),\widetilde{H}(t))
    \\&= \theta |\lambda(\widetilde{g}(t),\widetilde{H}(t))-\lambda(g_c,H_c)|^{-\alpha(1+\beta)}\int_M\left( 2|\widetilde{\Rc}-\frac{1}{4}\widetilde{H}^2+\nabla^2\widetilde{f}|^2+\frac{1}{2}|d^*\widetilde{H}+i_{\nabla\widetilde{f}}\widetilde{H}|^2\right) e^{-\widetilde{f}}dV_g
    \\&\geq C \|(\widetilde{\Rc}-\frac{1}{4}\widetilde{H}^2+\nabla^2\widetilde{f},\frac{1}{2}(d^*\widetilde{H}+i_{\nabla\widetilde{f}}\widetilde{H}))\|_{L^2}^{1-\beta}\geq\|(\frac{\partial \widetilde{g}}{\partial t},\frac{\partial \widetilde{H}}{\partial t})\|_{C^{k-1}_{g(t)}} 
\end{align*}
where we use the Lojasiewicz--Simon inequality (\ref{40}). Then,
\begin{align*}
    \int_1^{\widetilde{T}}\|(\frac{\partial \widetilde{g}}{\partial t},\frac{\partial \widetilde{H}}{\partial t})\|_{C^{k-1}_{g(t)}} dt&\leq C \int_1^{\widetilde{T}}( -\frac{d}{dt}|\lambda(\widetilde{g}(t),\widetilde{H}(t))-\lambda(g_c,H_c)|^\theta) dt
    \\&= C\left(|\lambda(\widetilde{g}(1),\widetilde{H}(1))-\lambda(g_c,H_c)|^\theta-|\lambda(\widetilde{g}(\widetilde{T}),\widetilde{H}(\widetilde{T}))-\lambda(g_c,H_c)|^\theta\right)
    \\&\leq C|\lambda(\widetilde{g}(0),\widetilde{H}(0))-\lambda(g_c,H_c)|^\theta <\frac{\epsilon}{4}.
\end{align*}
By shrinking $\mathcal{V}$ small enough, we may suppose that the $C^k$-norms with respect to $g(t)$ and $g_c$ differ at most by a factor 2. Then,
\begin{align*}
    \|(\widetilde{g}(\widetilde{T}),\widetilde{H}(\widetilde{T}))-(g_c,H_c)\|_{C^{k-1}_{g_c}}&\leq  \|(\widetilde{g}(1),\widetilde{H}(1))-(g_c,H_c)\|_{C^{k-1}_{g_c}}+ \int_1^{\widetilde{T}}\frac{d}{dt} \|(\widetilde{g}(t),\widetilde{H}(t))-(g_c,H_c)\|_{C^{k-1}_{g_c}}dt
    \\&\leq \frac{\epsilon}{4}+2\int_1^{\widetilde{T}}\|(\frac{\partial \widetilde{g}}{\partial t},\frac{\partial \widetilde{H}}{\partial t})\|_{C^{k-1}_{g(t)}} dt\leq \frac{3\epsilon}{4}
\end{align*}
which is a contradiction. Therefore, $\widetilde{T}=T=\infty$ (long time existence) and $(\widetilde{g},\widetilde{H})\longrightarrow (g_\infty,H_\infty)$. Using the Lojasiewicz--Simon inequality (\ref{40}) again,
\begin{align*}
    \frac{d}{dt}|\lambda(\widetilde{g}(t),\widetilde{H}(t))-\lambda(g_c,H_c)|^{1-2\alpha}&=(2\alpha-1) |\lambda(\widetilde{g}(t),\widetilde{H}(t))-\lambda(g_c,H_c)|^{-2\alpha}\frac{d}{dt}\lambda(\widetilde{g}(t),\widetilde{H}(t))
    \\&\geq C |\lambda(\widetilde{g}(t),\widetilde{H}(t))-\lambda(g_c,H_c)|^{-2\alpha}\|(\widetilde{\Rc}-\frac{1}{4}\widetilde{H}^2+\nabla^2\widetilde{f},\frac{1}{2}(d^*\widetilde{H}+i_{\nabla^2\widetilde{f}}\widetilde{H}))\|_{L^2}^{2}
    \\&\geq C.
\end{align*}
Integrating this differential inequality gives
\begin{align*}
    |\lambda(\widetilde{g}(t),\widetilde{H}(t))-\lambda(g_c,H_c)|\leq C(t+1)^{\frac{1}{1-2\alpha}}\xrightarrow[\text{ as }{t} \rightarrow\infty]{}0
\end{align*}
so $\lambda(g_{\infty},H_{\infty})=\lambda(g_c,H_c)$. Moreover, we can prove that the convergence is of a polynomial rate
\begin{align*}
    \|(\widetilde{g}(t_1),\widetilde{H}(t_1))-(\widetilde{g}(t_2),\widetilde{H}(t_2))\|_{C^{k-1}_{g_c}}&=\int_{t_1}^{t_2}\|(\frac{\partial \widetilde{g}}{\partial t},\frac{\partial \widetilde{H}}{\partial t})\|_{C^{k-1}_{g(t)}} dt
    \\&\leq C|\lambda(\widetilde{g}(t_1),\widetilde{H}(t_1))-\lambda(g_c,H_c)|^\theta\leq C(t_1+1)^{\frac{\theta}{1-2\alpha}}.
\end{align*} 
Let $t_2\to\infty$, and then
\begin{align*}
    \|(\widetilde{g}(t_1),\widetilde{H}(t_1))-(g_\infty,H_\infty)\|_{C^{k-1}_{g_c}}\leq C(t_1+1)^{\frac{\theta}{1-2\alpha}}.
\end{align*}

\end{proof}

\begin{theorem}\label{T13}
Let $(M^n,g_c,H_c,f_c)$ be a steady generalized gradient Ricci soliton with its corresponding generalized metric $\mathcal{G}_c=\mathcal{G}_c(g_c,0)$ and a background $3$-form $H_c$. Suppose $(g_c,H_c)$ is not a local maximum point of $\lambda$, then there exists an ancient flow of GRF and a family of diffeomorphism $\{\varphi_t\}$, $t\in (-\infty,0]$ such that  
\begin{align*}
    (\varphi^*_t g_t,\varphi^*_t H_t)\longrightarrow (g_c,H_c) \text{  as $t\to-\infty$.}
\end{align*}
\end{theorem}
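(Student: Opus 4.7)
The plan is to adapt the exit-time and parabolic-translation construction used by Haslhofer--M\"uller and Kr\"oncke for unstable Ricci solitons to the generalized setting, using the gauge-fixed generalized Ricci flow of (\ref{41}) together with the Lojasiewicz--Simon inequality of Theorem~\ref{T11}. Since $(g_c,H_c)$ is not a local maximum of $\lambda$, I pick a sequence $(g_i,b_i)\to(g_c,0)$ in $C^{k+2}_{g_c}$ with $\lambda(g_i,H_i)>\lambda(g_c,H_c)$, where $H_i:=H_c+db_i$. Run each $(g_i,b_i)$ forward by the gauge-fixed flow $(\widetilde g_i(t),\widetilde H_i(t))=(\psi_t^{\ast}g_i(t),\psi_t^{\ast}H_i(t))$ driven by $X(t)=-\nabla_{g_t}f_{(g_t,H_t)}$; then Corollary~\ref{C4} gives $\lambda(\widetilde g_i(t),\widetilde H_i(t))>\lambda(g_c,H_c)$ for all $t\geq 0$, and producing an ancient gauge-fixed flow that converges to $(g_c,H_c)$ as $t\to-\infty$ is equivalent, via the family $\{\psi_t\}$, to producing the $\{\varphi_t\}$ demanded by the statement.

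Fix a small $C^k_{g_c}$-ball $\mathcal{U}=B^k_\epsilon$ around $(g_c,0)$ on which Theorem~\ref{T11} applies, and let $T_i\in(0,\infty]$ denote the first exit time of $(\widetilde g_i,\widetilde H_i)$ from $\mathcal{U}$. If $T_i=\infty$, the proof of Theorem~\ref{T12} would give subsequential convergence to a critical point of $\lambda$ in $\overline{\mathcal{U}}$ at a level strictly above $\lambda(g_c,H_c)$; but (\ref{40}) applied at a critical point forces the level to equal $\lambda(g_c,H_c)$, a contradiction. Hence $T_i<\infty$, and continuous dependence on initial data (Lemma~\ref{L17}) applied on arbitrarily long finite intervals forces $T_i\to\infty$. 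Time-translate by setting
\[
(\bar g_i(t),\bar H_i(t)):=(\widetilde g_i(t+T_i),\widetilde H_i(t+T_i)),\qquad t\in[-T_i,0],
\]
so that $(\bar g_i(0),\bar H_i(0))\in\partial\mathcal{U}$ while $(\bar g_i(-T_i),\bar H_i(-T_i))=(g_i,H_i)\to(g_c,H_c)$. The Lojasiewicz--Simon differential inequality from the proof of Theorem~\ref{T12} (adapted to the sign $\lambda>\lambda(g_c,H_c)$, which only flips the monotonicity of $|\lambda-\lambda_c|^\theta$) yields
\[
\int_{-T_i}^{0}\Big\|\Big(\tfrac{\partial\bar g_i}{\partial t},\tfrac{\partial\bar H_i}{\partial t}\Big)\Big\|_{C^{k-1}_{\bar g_i(t)}}\,dt\leq C\,|\lambda(\bar g_i(0),\bar H_i(0))-\lambda(g_c,H_c)|^\theta,
\]
bounded uniformly in $i$ because $\lambda$ is continuous on $\overline{\mathcal{U}}$. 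Combining with the Shi-type estimates of Lemma~\ref{L18} on unit-length subintervals and a diagonal Arzel\`a--Ascoli extraction produces a nontrivial ancient gauge-fixed GRF $(\bar g_\infty,\bar H_\infty)$ on $(-\infty,0]$ with $(\bar g_\infty(0),\bar H_\infty(0))\in\partial\mathcal{U}$.

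To verify $(\bar g_\infty(t),\bar H_\infty(t))\to(g_c,H_c)$ as $t\to-\infty$, pass the pathwise Cauchy estimate
\[
\|(\bar g_i(t),\bar H_i(t))-(\bar g_i(-T_i),\bar H_i(-T_i))\|_{C^{k-1}_{g_c}}\leq C\,|\lambda(\bar g_i(t),\bar H_i(t))-\lambda(g_i,H_i)|^\theta
\]
to the limit $i\to\infty$, using $(g_i,H_i)\to(g_c,H_c)$ and $\lambda(g_i,H_i)\to\lambda(g_c,H_c)$, to obtain for each fixed $t\leq 0$
\[
\|(\bar g_\infty(t),\bar H_\infty(t))-(g_c,H_c)\|_{C^{k-1}_{g_c}}\leq C\,|\lambda(\bar g_\infty(t),\bar H_\infty(t))-\lambda(g_c,H_c)|^\theta.
\]
This in turn yields a $C^{k-1}_{g_c}$-limit $(g_{-\infty},H_{-\infty})$ as $t\to-\infty$; the uniform integral bound on $\|\partial_t\|$ forces $\|\nabla\lambda\|_{L^2}\to 0$ along a sequence of times, so $(g_{-\infty},H_{-\infty})$ is a critical point of $\lambda$, and (\ref{40}) at a critical point then pins $\lambda(g_{-\infty},H_{-\infty})=\lambda(g_c,H_c)$. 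Substituting this back into the displayed inequality in the limit $t\to-\infty$ forces $(g_{-\infty},H_{-\infty})=(g_c,H_c)$ exactly. The main obstacle is securing uniform parabolic regularity of the shifted flows $(\bar g_i,\bar H_i)$ across the arbitrarily long time translations $T_i\to\infty$: this is precisely the point at which the Lojasiewicz--Simon total variation bound (which only controls the $C^{k-1}$ norm of $\partial_t$) must be coupled with the Shi-type higher-derivative estimates of Lemma~\ref{L18} on unit-length subintervals to guarantee the $C^\infty$ compactness of $\{(\bar g_i,\bar H_i)\}$ on compact time intervals needed for the Arzel\`a--Ascoli extraction, after which the original ancient GRF and diffeomorphism family $\{\varphi_t\}=\{\psi_t\}$ are recovered by inverting the gauge.
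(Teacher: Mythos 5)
Your proposal is correct and follows essentially the same route as the paper's proof: a sequence of initial data approaching $(g_c,0)$ with $\lambda>\lambda(g_c,H_c)$, the gauge-fixed flow, exit times from a fixed Lojasiewicz--Simon neighborhood tending to infinity, time-translation so the exit occurs at $t=0$, compactness to extract an ancient limit, and the Lojasiewicz--Simon total-variation bound to prove nontriviality and backward convergence to $(g_c,H_c)$. Your explicit justification that the exit times are finite and tend to infinity fills in a step the paper leaves implicit, but the argument is otherwise the same.
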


\begin{proof}
Since $(g_c,H_c)$ is not a local maximum, there exists a sequence $(g_i,b_i)$ such that 
\[  \begin{tikzcd}
(g_i,b_i) \arrow[r, "C^{k}"] & (g_c,0)
\end{tikzcd} \text{  and $\lambda(g_i,b_i)>\lambda(g_c,0)$ for all $i$.}
\]

For any fixed $i$. we consider the modified flow $(\widetilde{g}_i(t),\widetilde{b}_i(t))$ defined similarly in (\ref{41}) starting with $(g_i,b_i)$. 
Then, \Cref{L17} implies that
\[  \begin{tikzcd}
 (\widetilde{g}_i(1),\widetilde{b}_i(1)) \arrow[r, "C^{k-2}"] & (g_c,0).
\end{tikzcd} 
\]

We can take $\epsilon$ small enough such that the Lojasiewicz--Simon inequality holds on $C^{k-2}$-neighborhood $B_{\epsilon}$ of $(g_c,0)$. Due to the assumption that $\lambda(g_c,H_c)$ is not a local maximum,
\begin{align*}
    \frac{d}{dt}\left(\lambda(\widetilde{g}_i,\widetilde{H}_i)-\lambda(g_c,H_c) \right)^{1-2\alpha}&=(1-2\alpha)\left(\lambda(\widetilde{g}_i,\widetilde{H}_i)-\lambda(g_c,H_c) \right)^{-2\alpha}\frac{d}{dt}\lambda(\widetilde{g}_i,\widetilde{H}_i)
    \\&\geq -C.
\end{align*}
By the maximum principle, 
\begin{align*}
    \nonumber&\left(\lambda(\widetilde{g}_i(t),\widetilde{H}_i(t))-\lambda(g_c,H_c) \right)^{1-2\alpha}\geq \left(\lambda(\widetilde{g}_i(s),\widetilde{H}_i(s))-\lambda(g_c,H_c) \right)^{1-2\alpha}-C(t-s)
\end{align*}
and
\begin{align}
    \left( (\lambda(\widetilde{g}_i(t),\widetilde{H}_i(t))-\lambda(g_c,H_c) )^{1-2\alpha}-C(s-t)  \right)^{\frac{1}{1-2\alpha}}\leq \lambda(\widetilde{g}_i(t),\widetilde{H}_i(t))-\lambda(g_c,H_c) \label{42}.
\end{align}

Therefore, we can pick $\{t_i\}$ ($t_i\geq 1$) such that 
\begin{align*}
    \|(\widetilde{g}_i(t_i),\widetilde{b}_i(t_i))-(g_c,0)\|_{C^{k-2}}=\epsilon \quad \text{ and $t_i\longrightarrow \infty$.} 
\end{align*}
Define
\begin{align*}
   \begin{cases}
    \widetilde{g}^s_i(t)=\widetilde{g}_i(t+t_i) \\ \widetilde{b}^s_i(t)=\widetilde{b}_i(t+t_i) 
   \end{cases}
   \quad t\in [T_i,0] \text{  where $T_i=1-t_i\longrightarrow -\infty$}.
\end{align*}
Then, 
\begin{align*}
    \|(\widetilde{g}^s_i(t),\widetilde{b}^s_i(t))-(g_c,0)\|_{C^{k-2}}\leq \epsilon \quad \text{ for all $t\in [T_i,0]$}
\end{align*}
and
\[  \begin{tikzcd}
 (\widetilde{g}^s_i(t),\widetilde{b}^s_i(t))=(\widetilde{g}_i(1),\widetilde{b}_i(1)) \arrow[r, "C^{k-2}"] & (g_c,0).
\end{tikzcd} 
\]

By compactness result, there exists a subsequence of $(\widetilde{g}_i^s,\widetilde{b}^s_i)$ converging in $C^{k-3}_{loc}(M\times (-\infty,0])$ to an ancient flow $(\widetilde{g}(t),\widetilde{b}(t))$, $t\in(-\infty,0]$. Note that $(\widetilde{g}(t),\widetilde{b}(t))$ is a solution of (\ref{41}). Next, we consider the diffeomorphism $\psi_t$ generated by $X(t)=\widetilde{\nabla}_{\widetilde{g}(t)}\widetilde{f}_{\widetilde{g_t},\widetilde{H_t}}$ then $(g(t),H(t))=(\psi^*_t\widetilde{g}(t),\psi^*_t\widetilde{H}(t))$ is a solution of GRF. Using the same idea in the proof of \Cref{T12}, we have
\begin{align*}
    \|(\widetilde{\Rc}-\frac{1}{4}\widetilde{H}^2+\nabla^2\widetilde{f},\frac{1}{2}(d^*\widetilde{H}+i_{\nabla \widetilde{f}}\widetilde{H}))\|_{C^{k-2}}\leq \|(\widetilde{\Rc}-\frac{1}{4}\widetilde{H}^2+\nabla^2\widetilde{f},\frac{1}{2}(d^*\widetilde{H}+i_{\nabla \widetilde{f}}\widetilde{H}))\|^{1-\beta}_{L^2} \text{  for some $\beta\in (0,1)$.}
\end{align*}

Let $\theta=1-\alpha(1+\beta)>0$, and we derive
\begin{align*}
    &\kern-1em\frac{d}{dt}\Big(\lambda(\widetilde{g}_i(t),\widetilde{b}_i(t))-\lambda(g_c,0)\Big)^\theta
    \\&=\theta \Big(\lambda(\widetilde{g}_i(t),\widetilde{b}_i(t))-\lambda(g_c,0)\Big)^{\theta-1}\frac{d}{dt}\lambda(\widetilde{g}_i(t),\widetilde{b}_i(t))
    \\&\geq \theta \left(\lambda(\widetilde{g}_i(t),\widetilde{b}_i(t))-\lambda(g_c,0) \right)^{-\alpha(1+\beta)}\|(\widetilde{\Rc}-\frac{1}{4}\widetilde{H}^2+\nabla^2\widetilde{f},\frac{1}{2}(d^*\widetilde{H}+i_{\nabla \widetilde{f}}\widetilde{H}))\|^2_{L^2}
    \\&\geq C\|(\widetilde{\Rc}-\frac{1}{4}\widetilde{H}^2+\nabla^2\widetilde{f},\frac{1}{2}(d^*\widetilde{H}+i_{\nabla \widetilde{f}}\widetilde{H}))\|_{C^{k-2}}.
\end{align*}
Thus,
\begin{align*}
    \epsilon&= \|(\widetilde{g}_i(t_i),\widetilde{b}_i(t_i))-(g_c,0)\|_{C^{k-2}}
    \\&\leq \|(\widetilde{g}_i(1),\widetilde{b}_i(1))-(g_c,0)\|_{C^{k-2}}+\int_1^{t_i}\frac{d}{dt}\|(\widetilde{g}_i(t),\widetilde{b}_i(t))-(g_c,0)\|_{C^{k-2}} dt
    \\&\leq \|(\widetilde{g}_i(1),\widetilde{b}_i(1))-(g_c,0)\|_{C^{k-2}}+C\int_1^{t_i} \frac{d}{dt}\Big(\lambda(\widetilde{g}_i(t),\widetilde{b}_i(t))-\lambda(g_c,0)\Big)^\theta dt
    \\&\leq \|(\widetilde{g}_i(1),\widetilde{b}_i(1))-(g_c,0)\|_{C^{k-2}}+C\Big(\lambda(\widetilde{g}_i(t_i),\widetilde{b}_i(t_i))-\lambda(g_c,0)\Big)^\theta.
\end{align*}
By taking $i\to\infty$, we see that 
\begin{align*}
    \epsilon\leq C(\lambda(g(0),b(0))-\lambda(g_c,0))^\theta, 
\end{align*}
which implies that the generalized Ricci flow is non-trivial. Finally, we check that $(\widetilde{g}(t),\widetilde{H}(t))$ will converge to $(g_c,H_c)$ as $t\to-\infty$.  For $t\in [T_i,0]$,
\begin{align*}
    &\kern-1em\|(\widetilde{g}^s_i(T_i),\widetilde{H}_i^s(T_i))-(\widetilde{g}^s_i(t),\widetilde{H}_i^s(t))\|_{C^{k-3}}
    \\&\leq \|(\widetilde{g}^s_i(T_i),\widetilde{b}_i^s(T_i))-(\widetilde{g}^s_i(t),\widetilde{b}_i^s(t))\|_{C^{k-2}}\leq \int_{T_i}^t \frac{d}{dt}\|(\widetilde{g}^s_i(t),\widetilde{b}_i^s(t))\|_{C^{k-2}}dt
    \\&\leq C \int_{T_i}^t \frac{d}{dt}\left(\lambda(\widetilde{g}^s_i(t),\widetilde{H}^s_i(t))-\lambda(g_0,H_0)\right)^\theta dt\leq C\left(\lambda(\widetilde{g}_i(t+t_i),\widetilde{H}_i(t+t_i))-\lambda(g_0,H_0)\right)^\theta .
    \\&\leq \Big(-Ct+ C(\lambda(\widetilde{g}_i(t_i),\widetilde{H}_i(t_i))^{1-2\alpha}\Big)^{\frac{\theta}{1-2\alpha}}
\end{align*}
where we use inequality (\ref{42}). We observe that
\begin{align*}
     \|(g_c,H_c)-(\widetilde{g}^s_i(T_i),\widetilde{H}_i^s(T_i))\|_{C^{k-3}}= \|(g_c,H_c)-(\widetilde{g}_i(1),\widetilde{H}_i(1))\|_{C^{k-3}}\leq\|(g_c,0)-(\widetilde{g}_i(1),\widetilde{b}_i(1))\|_{C^{k-2}}\rightarrow 0
\end{align*}
so
\begin{align*}
    &\kern-1em\|(g_c,H_c)-(\widetilde{g}(t),\widetilde{H}(t))\|_{C^{k-4}}
    \\&\leq \|(g_c,H_c)-(\widetilde{g}^s_i(T_i),\widetilde{H}_i^s(T_i))\|_{C^{k-4}}+\|(\widetilde{g}_i^s(T_i),\widetilde{H}_i^s(T_i))-(\widetilde{g}^s_i(t),\widetilde{H}_i^s(t))\|_{C^{k-4}}
    \\&\kern2em+\|(\widetilde{g}(t),\widetilde{H}(t))-(\widetilde{g}^s_i(t),\widetilde{H}_i^s(t))\|_{C^{k-4}}
\end{align*}
Take $i\to -\infty$ and $t\to-\infty$, we see that
\begin{align*}
    \|(g_c,H_c)-(\widetilde{g}(t),\widetilde{H}(t))\|_{C^{k-4}}\to 0.
\end{align*}
It implies
\begin{align*}
    (\varphi_t^{-1})^*(g_t,H_t)\longrightarrow (g_c,H_c)  \quad \text{  in $C^{k-4}$ as $t\to-\infty$}.
\end{align*}
\end{proof}

\bibliographystyle{plain}
\bibliography{Reference}

\begin{thebibliography}{10}

\bibitem{AF}
Ilka Agricola and Thomas Friedrich.
\newblock A note on flat metric connections with antisymmetric torsion.
\newblock {\em Differential Geometry and its Applications}, 28(4):480--487,
  2010.

\bibitem{B}
Arthur~L. Besse.
\newblock {\em Einstein manifolds}, volume~10 of {\em Ergebnisse der Mathematik
  und ihrer Grenzgebiete (3) [Results in Mathematics and Related Areas (3)]}.
\newblock Springer-Verlag, Berlin, 1987.

\bibitem{SS}
Simon Brendle and Richard Schoen.
\newblock Manifolds with {$1/4$}-pinched curvature are space forms.
\newblock {\em J. Amer. Math. Soc.}, 22(1):287--307, 2009.

\bibitem{MR3430881}
Huai-Dong Cao and Chenxu He.
\newblock Linear stability of {P}erelman's {$\nu$}-entropy on symmetric spaces
  of compact type.
\newblock {\em J. Reine Angew. Math.}, 709:229--246, 2015.

\bibitem{C2}
Huai-Dong Cao and Meng Zhu.
\newblock On second variation of {P}erelman's {R}icci shrinker entropy.
\newblock {\em Math. Ann.}, 353(3):747--763, 2012.

\bibitem{MR2302600}
Bennett Chow, Sun-Chin Chu, David Glickenstein, Christine Guenther, James
  Isenberg, Tom Ivey, Dan Knopf, Peng Lu, Feng Luo, and Lei Ni.
\newblock {\em The {R}icci flow: techniques and applications. {P}art {I}},
  volume 135 of {\em Mathematical Surveys and Monographs}.
\newblock American Mathematical Society, Providence, RI, 2007.
\newblock Geometric aspects.

\bibitem{MR2061425}
Bennett Chow and Dan Knopf.
\newblock {\em The {R}icci flow: an introduction}, volume 110 of {\em
  Mathematical Surveys and Monographs}.
\newblock American Mathematical Society, Providence, RI, 2004.

\bibitem{MR2274812}
Bennett Chow, Peng Lu, and Lei Ni.
\newblock {\em Hamilton's {R}icci flow}, volume~77 of {\em Graduate Studies in
  Mathematics}.
\newblock American Mathematical Society, Providence, RI; Science Press Beijing,
  New York, 2006.

\bibitem{MR3211041}
Tobias~Holck Colding and William~P. Minicozzi, II.
\newblock On uniqueness of tangent cones for {E}instein manifolds.
\newblock {\em Invent. Math.}, 196(3):515--588, 2014.

\bibitem{2003math.....11253D}
Xianzhe {Dai}, Xiaodong {Wang}, and Guofang {Wei}.
\newblock {On the Stability of Riemannian Manifold with Parallel Spinors}.
\newblock {\em arXiv Mathematics e-prints}, page math/0311253, November 2003.

\bibitem{MR0267604}
David~G. Ebin.
\newblock The manifold of {R}iemannian metrics.
\newblock In {\em Global {A}nalysis ({P}roc. {S}ympos. {P}ure {M}ath., {V}ol.
  {XV}, {B}erkeley, {C}alif., 1968)}, pages 11--40. Amer. Math. Soc.,
  Providence, R.I., 1970.

\bibitem{GRF}
Mario Garcia-Fernandez and Jeffrey Streets.
\newblock {\em Generalized {R}icci flow}, volume~76 of {\em University Lecture
  Series}.
\newblock American Mathematical Society, Providence, RI, [2021] \copyright
  2021.

\bibitem{10.4310/jdg/1214436922}
Richard~S. Hamilton.
\newblock {Three-manifolds with positive Ricci curvature}.
\newblock {\em Journal of Differential Geometry}, 17(2):255 -- 306, 1982.

\bibitem{H1}
Robert Haslhofer.
\newblock Perelman's lambda-functional and the stability of {R}icci-flat
  metrics.
\newblock {\em Calc. Var. Partial Differential Equations}, 45(3-4):481--504,
  2012.

\bibitem{H2}
Robert Haslhofer and Reto Müller.
\newblock Dynamical stability and instability of ricci-flat metrics.
\newblock {\em Mathematische Annalen}, 360(1-2):547–553, May 2014.

\bibitem{MR707349}
N.~Koiso.
\newblock Einstein metrics and complex structures.
\newblock {\em Invent. Math.}, 73(1):71--106, 1983.

\bibitem{MR504300}
Norihito Koiso.
\newblock Nondeformability of {E}instein metrics.
\newblock {\em Osaka Math. J.}, 15(2):419--433, 1978.

\bibitem{MR539597}
Norihito Koiso.
\newblock A decomposition of the space {${\cal M}$} of {R}iemannian metrics on
  a manifold.
\newblock {\em Osaka Math. J.}, 16(2):423--429, 1979.

\bibitem{MR539596}
Norihito Koiso.
\newblock On the second derivative of the total scalar curvature.
\newblock {\em Osaka Math. J.}, 16(2):413--421, 1979.

\bibitem{MR558319}
Norihito Koiso.
\newblock Rigidity and stability of {E}instein metrics---the case of compact
  symmetric spaces.
\newblock {\em Osaka Math. J.}, 17(1):51--73, 1980.

\bibitem{MR676241}
Norihito Koiso.
\newblock Rigidity and infinitesimal deformability of {E}instein metrics.
\newblock {\em Osaka Math. J.}, 19(3):643--668, 1982.

\bibitem{Kroencke2014}
Klaus Kr{\"o}ncke.
\newblock {\em Stability of Einstein Manifolds}.
\newblock doctoralthesis, Universit{\"a}t Potsdam, 2014.

\bibitem{K1}
Klaus Kr\"{o}ncke.
\newblock On the stability of {E}instein manifolds.
\newblock {\em Ann. Global Anal. Geom.}, 47(1):81--98, 2015.

\bibitem{K3}
Klaus Kr\"{o}ncke.
\newblock Stability and instability of {R}icci solitons.
\newblock {\em Calc. Var. Partial Differential Equations}, 53(1-2):265--287,
  2015.

\bibitem{K4}
Klaus Kr\"{o}ncke.
\newblock Stability of {E}instein metrics under {R}icci flow.
\newblock {\em Comm. Anal. Geom.}, 28(2):351--394, 2020.

\bibitem{K2}
Klaus Kröncke.
\newblock Rigidity and infinitesimal deformability of ricci solitons.
\newblock {\em The Journal of Geometric Analysis}, 26(3):1795–1807, Apr 2015.

\bibitem{JM}
John Milnor.
\newblock Curvatures of left invariant metrics on {L}ie groups.
\newblock {\em Advances in Math.}, 21(3):293--329, 1976.

\bibitem{P}
T.~Oliynyk, V.~Suneeta, and E.~Woolgar.
\newblock A gradient flow for worldsheet nonlinear sigma models.
\newblock {\em Nuclear Physics B}, 739(3):441–458, Apr 2006.

\bibitem{Pe}
Grisha Perelman.
\newblock {The Entropy formula for the Ricci flow and its geometric
  applications}.
\newblock 7 2006.

\bibitem{MR3319966}
Fabio Podest\`a and Andrea Spiro.
\newblock On moduli spaces of {R}icci solitons.
\newblock {\em J. Geom. Anal.}, 25(2):1157--1174, 2015.

\bibitem{DG}
Alberto Raffero and Luigi Vezzoni.
\newblock On the dynamical behaviour of the generalized ricci flow, 2020.

\bibitem{Rubio_2019}
Roberto Rubio and Carl Tipler.
\newblock The lie group of automorphisms of a courant algebroid and the moduli
  space of generalized metrics.
\newblock {\em Revista Matemática Iberoamericana}, 36(2):485–536, Dec 2019.

\bibitem{MR2219268}
Natasa Sesum.
\newblock Linear and dynamical stability of {R}icci-flat metrics.
\newblock {\em Duke Math. J.}, 133(1):1--26, 2006.

\bibitem{J}
Jeffrey Streets.
\newblock Regularity and expanding entropy for connection {R}icci flow.
\newblock {\em J. Geom. Phys.}, 58(7):900--912, 2008.

\bibitem{classification}
Jeffrey Streets.
\newblock Classification of solitons for pluriclosed flow on complex surfaces.
\newblock {\em Mathematische Annalen}, 375, 12 2019.

\end{thebibliography}
\nocite{*}

\end{document}